\newfont{\bssten}{cmssbx10}
\newfont{\bssnine}{cmssbx10 scaled 900}
\newfont{\bssdoz}{cmssbx10 scaled 1200}
\newtheorem{theorem}{Theorem}
\newtheorem{definition}{Definition}
\newtheorem{lemma}{Lemma}
\newtheorem{remark}{Remark}
\newtheorem{proposition}{Proposition}
\newtheorem{ex}{Example}
\DeclareRobustCommand{\cev}[1]{%
  \mathpalette\do@cev{#1}%
}
\newcommand{\do@cev}[2]{%
  \fix@cev{#1}{+}%
  \reflectbox{$\m@th#1\vec{\reflectbox{$\fix@cev{#1}{-}\m@th#1#2\fix@cev{#1}{+}$}}$}%
  \fix@cev{#1}{-}%
}
\newcommand{\fix@cev}[2]{%
  \ifx#1\displaystyle
    \mkern#23mu
  \else
    \ifx#1\textstyle
      \mkern#23mu
    \else
      \ifx#1\scriptstyle
        \mkern#22mu
      \else
        \mkern#22mu
      \fi
    \fi
  \fi
}
\def\epsilon{\varepsilon}
\newcommand{\be}{\begin{equation}}
\newcommand{\ee}{\end{equation}}
\newcommand\om{{\omega}}
\def\R{{\mathbb R}}
\def\Z{{\mathbb Z}}
\def\N{{\mathbb N}}
\def\maC{{\mathcal{C}}}
\def\T{{\mathcal{T}}}
\def\sl{\scriptsize{\mbox{\textsc{l}}}}
\def\sf{\scriptsize{\mbox{\textsc{f}}}}
\def\maS{{\mathcal S}}
\def\maA{{\mathcal{A}}}
\def\masA{{\mathscr{A}}}
\def\maV{{\mathcal{V}}}
\def\maB{{\mathcal{B}}}
\def\maH{{\mathcal{H}}}
\def\maQ{{\mathscr{Q}}}
\def\maM{{\mathscr{M}}}
\def\maP{{\mathscr{P}}}
\def\maE{{\mathcal{E}}}
\def\maG{{\mathcal{G}}}
\def\maH{{\mathcal{H}}}
\def\mw{{\mathbf w}}
\def\mz{{\mathbf z}}
\def\mc{{\mathbf c}}
\def\ms{{\mathbf s}}
\def\msg{{\mathbf a}}
\def\mga{{\mathbf b}}
\def\msg{{\boldsymbol{\sigma}}}
\def\mga{{\boldsymbol{\gamma}}}
\def\v{{\--}}
\def\pv{{\not\!\!\--}}
\def\T2a{{\tau_{2\vec \alpha^{+}}}}
\def\t2a{{t_{2\vec \alpha^{+}}}}
\newcommand\suite[1]{\left(#1\right)_{n\in\N}}
\newcommand\bp{{\mathbb P^0}}
\newcommand\bpr[1]{{\mathbb P^0}\left[#1\right]}
\def\\mga{{\cal{\mga}}}
\def\({{\Bigl(}}
\def\){{\Bigr)}}
\def\square{\ifmmode\sqr\else{$\sqr$}\fi}
\def\sqr{\vcenter{
         \hrule height.1mm
         \hbox{\vrule width.1mm height2.2mm\kern2.18mm\vrule width.1mm}
         \hrule height.1mm}}                  
\newcommand\ccc{\circledcirc}
\def\Mlcfs{M_{\textsc{lcfs}}}
\def\Qlcfs{Q_{\textsc{lcfs}}}
\def\Clcfs{C_{\textsc{lcfs}}}
\def\Slcfs{S_{\textsc{lcfs}}}
\newenvironment{itemi}
{%
  \begin{list}{$\bullet$}%
  {\noindent%
    \usecounter{enumi}%
    \setlength{\topsep}{2pt}%
    \setlength{\partopsep}{0pt}%
                \setlength{\itemsep}{2pt}%
    \setlength{\parsep}{0pt}%
    \setlength{\leftmargin}{2.5em}%
    \setlength{\labelwidth}{1.5em}%
    \setlength{\labelsep}{0.5em}%
    \setlength{\listparindent}{0pt}%
    \setlength{\itemindent}{0pt}%
  }%
}%
{\end{list}}
\def\CE{\mathcal{U}}
\def\emptyword{\emptyset}
\def\emptystate{\mathbf{\emptyset}}
\def\tx{\tilde x}
\def\ty{\tilde y}
\newcommand\gre{\mathbf{e}}
\def\maL{\mathcal L}
\def\maM{\mathcal M}
\def\maN{\mathcal N}
\begin{document}

\title{Loynes construction for the extended bipartite matching}

\author{
Pascal Moyal, Ana Bu$\check{\mbox{s}}$i\'c and Jean Mairesse
}


\maketitle

\begin{abstract}
We propose an explicit construction of the stationary state of Extended Bipartite Matching (EBM) models, as defined in (Bu$\check{\mbox{s}}$i\'c {\em et al.}, 2013). We use a Loynes-type backwards scheme similar in flavor to that in 
(Moyal {\em et al.}, 2017), allowing to show the existence and uniqueness of a bi-infinite 
perfect matching under various conditions, for a large class of matching policies and of bipartite matching structures. 
The key algebraic element of our construction is the sub-additivity of a suitable stochastic recursive representation of the model, 
satisfied under most usual matching policies. By doing so, we also derive stability conditions for the system under general stationary ergodic assumptions, subsuming 
the classical markovian settings. 
   
\medskip

\noindent
{\em Keywords: Matching model, graphs, Stochastic recursions, Coupling}
\medskip

\noindent
{\em 2000 Mathematics Subject Classification:} Primary 60J10; 60G10; Secondary 60K25; 05C38; 05C70; 90B15.
\end{abstract}


\section{Introduction}
\label{sec:intro}
Motivated by a wide range of applications, bipartite matching models can be thought of as a generalization of the usual skill-based queueing system in which customers and servers play symmetric roles: instead of being part of the 'hardware' of the system, the servers come and go exactly 
as the customers. Service times are not taken into account, as customers and servers only use the system as an interface to be matched together, and leave the system by couples, 
as soon as they form one. A bipartite graph named {\em matching graph}, specifies the possible matchings, i.e. the classes of servers and customers are represented by the nodes of the graph, the bipartition 
consists of the sets $\maC$ of "server" nodes and $\maS$ of "customer" nodes, and the existence of an edge between customer node $i$ and server node $j$ means that customers of class $i$ can be attended by servers of class $j$. 
As is easily seen, such models have a large variety of applications, from organ exchange programs \cite{BDPS11} to housing allocation \cite{TW08} or taxi platforms (see \cite{BC15,BC17}, in which the considered matching graphs are 
complete - however each match cab/passenger occurs with some specified probability).  

Under general bipartite structures, this class of models was formalized in \cite{CKW09} and then \cite{AW11}, under the name {\em stochastic bipartite matching} (BM) model: assume that 
arrivals occur by pairs in discrete time. At each time, a pair customer/server is drawn from a given probability measure $\mu:=\mu_{\maC} \otimes \mu_{\maS}$ on the set $\maC \times \maS$, independently of everything else. 
Possible matches are specified by a fixed bipartite matching graph, and a {\em matching policy} decides which match to perform at any given time, in case of multiple choices. 
The above seminal references addressed the stability problem of such models, under the First Come, First Served (FCFS) policy.  
A general condition on $\mu$ is obtained (eq. (\ref{eq:Ncond}) below) that guarantees the existence of a perfect FCFS matching in the long run, using regeneration points.  
Interestingly, this condition is also necessary and sufficient for complete resource pooling to hold for the fluid approximation of the corresponding skill-based service system under the FCFS-ALIS 
(allocate to longest idle server) policy (see \cite{AW14}), and appears as a stochastic analog of Hall's necessary and sufficient condition for the existence of a perfect matching on a given bipartite graph, see \cite{Hall35}. 
In \cite{ABMW17}, using an indirect reversibility argument, the stationary measure of the FCFS BM model is proven to have a remarkable product form under (\ref{eq:Ncond}). 
Also, any initially empty system couples from the past to the above distribution, as shown using a backwards scheme {\em \`a la} Loynes \cite{Loynes62} (Section 3 in \cite{ABMW17}).

An extension of the BM model, termed {\em extended} bipartite matching (EBM) model was proposed in \cite{BGM13}, allowing the probability measure $\mu$ to have an arbitrary support on $\maC \times \maS$, 
in a way that $\mu$ cannot necessarily be written as a tensor product of two measures on $\maC$ and $\maS$. 
Couples customer/server in the support of $\mu$ are represented by a secondary graph, termed {\em arrival graph} on the bipartition $\maC\cup \maS$. 
An extensive stability study of EBM models was undertaken in \cite{BGM13}, in particular providing sufficient stability conditions on $\mu$ (eq. (\ref{eq:Scond}) below), and proving that the 'Match the longest' policy is always stable under (\ref{eq:Ncond}) - we say in such case that the considered matching policy has a {\em maximal} stability region. 

To suit alternative areas of applications, such as dating websites, collaborative economic architectures and assemble-to-order systems, another variation on matching systems was proposed in \cite{MaiMoy16}: 
in the so-called stochastic {\em General matching} (GM) model, the matching graph is non-necessarily bipartite (hence there are no such things as {\em customers} and {\em servers}), and items enter the system one by one 
following a fixed probability distribution on the set of nodes. General stability results are given in \cite{MaiMoy16}, including a necessary stability condition related to (\ref{eq:Ncond}) (which cannot be satisfied 
whenever the matching graph is bipartite - thereby justifying the assumption of pairwise arrivals in that case, to make the system stabilizable), the maximality of 'Match the Longest' 
and the study of particular graphical structures. A variant of the GM model in continuous time was then proposed in \cite{MoyPer17}, showing using fluid (in)stability arguments, that aside for a particular class of matching graphs, there always exist a matching policy of the strict priority type that is not maximal, and that the min-cost 'Uniform' policy, consisting in choosing the class of match uniformly at random among the non empty neighboring classes, is never maximal. The maximality of FCFS for GM models, and the product form of the stationary measure, was then shown in \cite{MBM17}, 
together with the coupling to the stationary state whenever the latter exists, in the strong backwards sense of Borovkov and Foss \cite{Bor84,Foss92}, for a various class of matching policies including FCFS, LCFS, Match the Longest and 'Uniform'. Related models are studied in 
\cite{GurWa}, \cite{BM14} and \cite{NS16}, proposing optimization schemes for models in which the matching structures are particular (bipartite) graphs or hypergraphs, and the matching schemes are associated to a cost or a reward.  

Coupling-from-the past convergence schemes, such as Loynes's construction, are a crucial tool for the explicit pathwise construction of the steady state. These techniques form a central keystone for the qualitative comparison of discrete-event systems under general statistical assumptions (see e.g. Chapter 4 of \cite{BacBre02}) and perfect simulation of the stationary state (see \cite{PW96}). 
Motivated by this practical interest, the present work consists of a generalization of Loynes's construction in Section 3 of \cite{ABMW17} to:
\begin{itemize}
\item {EBM models}, 
\item {a wider class of matching policies} (in fact, to most usual policies that do not allow delaying any possible match, thanks to a particular 'block-wise' sub-additive property that is specified below), 
\item {stationary ergodic} - but not necessarily independent - {inputs},
\item {a wider class of initial conditions}.
\end{itemize} 
For doing so, we adapt to the present context, the coupling arguments developed in \cite{MBM17} for GM models, which mostly use Borovkov and Foss Theory of renovation \cite{Foss92,Foss94}. This paper is organized as follows: the EBM model is formally introduced in Section \ref{sec:model}. 
A key sub-additive property of the model under most common matching policies is shown in Section \ref{sec:subadd}. After introducing abstract notions that will prove useful in the proofs to come 
(namely, bi-separable graphs and erasing couples, respectively in Sections \ref{sec:bisep} and \ref{sec:erase}, we construct a backwards scheme, and then state and prove our main coupling results in Section \ref{sec:loynes}, 
making explicit the construction of perfect bi-infinite matchings under stability conditions, in sub-section \ref{subsec:matchings}.

\section{Formal definition of the models}
\label{sec:model}

\subsection{Notation}
We denote by $\R$ the set of real numbers, $\Z$ the set of integers, and by $\N$ the subset of
non-negative integers.   
For any finite set $A$, we let $|A|$ denote the cardinality of $A$, $\varsigma(A)$ be the set of permutations of $A$, and $A^*$ be the free monoid generated by $A$. 
Let $\emptyword$ denote the empty word of $A^*$. Words of $A^*$ will typically be denoted by bold symbols, and their letters in the corresponding regular symbol, e.g. 
$\mw=w_1...w_{|\mw|}$, where $|\mw|$ denotes the length of the word $\mw$. 
For any $\mw \in A$ and any $B \subset A$, set $|\mw|_B = \# \{i \mid w_i \in B\}$, the number
of occurrences in $\mw$ of letters from $B$. For $B = \{b\}$, we shorten
the notation to $|\mw|_b$. 
Furthermore, for any $\mw \in A^*$ we set $[\mw]:=(|\mw|_a)_{a\in A}$, the commutative image of $w$. 
For a word $\mw \in A^*$ of length $k$ and $i\in \{1,\dots, k\}$, we denote by
$\mw_{[i]}:= w_1\ldots w_{i-1} w_{i+1}\ldots w_k$, the sub-word of $\mw$
obtained by deleting $w_i$. For positive integers $i,j$ such that $i\le j$, the $i$-th vector of
the canonical basis of $\mathbb R^j$ is denoted by $\gre_i$. 

For two positive integers $a$ and $b$, the denote by $\|.\|$ the $\ell_1$-norm of
$\mathbb R^{a}\times \mathbb R^{b}$, {\em i.e.} for all $(x,y)
\in \mathbb R^{a}\times \mathbb R^{b}$,
\[\|(x,y) \| = \sum_{i=1}^{a} |x(i)|+\sum_{j=1}^{b} |y(j)|.\] 
Finally, the commutative image of a couple $(\mw,\mz) \in A^*\times B^*$ is the following couple of 
$\N^{|A|}\times \N^{|B|}$, 
\[\left[(\mw,\mz)\right]:=\left([\mw],[\mz]\right).\] 

\subsection{Extended bipartite matching}
\label{subsec:defEBM}
We call {\em Extended Bipartite Matching} (EBM), the model introduced in \cite{BGM13}, from which  
we keep the main terminology and notation. For clarity, let us recall hereafter the main definitions of Section 2 therein: 

\begin{definition}
\rm
We call a {\em bipartite matching structure} a quadruple $\maB:=(\maC,\maS,E,F)$, where 
\begin{itemi}
\item $\maC$ (which we identify with $\{1,2,...,|\maC|\}$) is the non-empty and finite set of customer classes; 
\item $\maS$ (identified with $\{1,2,...,|\maS|\}$) is the non-empty and finite set of server classes;
\item $E \subset \maC\times\maS$ is the set of possible matchings;
\item $F \subset \maC\times\maS$ is the set of possible arrivals.
\end{itemi}
\end{definition} 

Given a structure $\maB$, we consider that customers and servers of various classes arrive in the system by pairs, 
and let $\maC$ and $\maS$ denote the sets of customer and server classes, respectively. 
The set of possible incoming pairs is given by $F$, and the set $E$ defines the pairs that may depart
from the system, aka the possible \emph{matchings}. We say that
$\maH:=(\maC \cup \maS,F)$ is the \emph{arrival graph} and that $\maG:=(\maC \cup \maS,E)$ is the {\em matching graph} of the model. 
We assume without loss of generality that
\begin{itemi}
\item $\maG$ is connected;
\item $\maH$ has no isolated vertices.
\end{itemi} 
For a matching graph $\maG=(\maC \cup \maS,E)$, we denote by $\maC(s)$ the set of
customer classes that can be matched with an $s$-server, and by
$\maS(c)$ the set of server classes that can be matched with a
$c$-customer:
$$\maS(c) = \{s \in\maS \; : \; (c,s) \in E\}, \quad \maC(s) = \{c \in \maC \; : \; (c,s) \in E\}.$$
For any subsets $A \subset \maC$, and $B \subset\maS$, we define
$$\maS(A) = \cup_{c\in A}\maS(c), \quad \maC(B) = \cup_{s\in B}\maC(s).$$
Upon arrival of a new ordered pair $(c,s)\in F$, two situations may
occur: if neither $c$ nor $s$ match with the servers/customers
already present in the system, then $c$ and $s$ are simply added to
the buffer; if $c$, resp. $s$, can be matched then it departs the
system with its match, which leaves the buffer forever. If several matchings are possible for $c$,
resp. $s$, then it is the role of the matching policy to select one. 
To properly define matching policies that may depend on (possibly random) choices 
of matches, we will need to represent the orders of preferences of the customers and servers upon arrivals by elements of 
the two following finite sets, 
\begin{align*}
\mathbb S &= \varsigma(\maS(1)) \times ... \times \varsigma(\maS(|\maC|));\\
\mathbb C &= \varsigma(\maC(1)) \times ... \times \varsigma(\maC(|S|)),
\end{align*}
in other words, e.g. for any $\sigma:= \left(\sigma(1),...,\sigma\left(|\maC|\right)\right) \in \mathbb S$ and $c \in \maC$, 
$\sigma(c)$ is a permutation of the classes of servers that are compatible with $c$. Then, identifying $\maS(c)$ with $\llbracket 1,|\maS(c)| \rrbracket$ we denote 
for all $k \in \llbracket 1,\maS(c) \rrbracket$, by $\sigma(c)[k]$ the $k$-th neighboring class of $c$ according to $\sigma$. Similarly for any $\gamma \in \mathbb C$. 
Any such array of permutations $\sigma \in \mathbb S$ (resp. $\gamma \in \mathbb C$) is called {\em list of customer (resp. server) preferences},  
and if the entering couple is $(c,s) \in F$, $\sigma(c)$ and $\gamma(s)$ are respectively understood as the order of preference of the entering customer and the entering server, 
among the classes of their possible matches. 
Then, a matching policy will be formalized as an operator mapping the system state to the next one, provided that the classes of the entering 
couple are $(c,s)\in F$ and the orders of preferences of the classes are given by $(\sigma,\gamma)\in\mathbb S \times \mathbb C$. 
The matching policies we consider are presented in detail in Section \ref{subsec:pol}.

\begin{definition}
\rm
We call an {\em extended bipartite matching} (EBM) model, a bipartite matching structure $(\maC,\maS,E,F)$ together with a matching policy $\phi$, and a (finite or infinite)  
family of ordered quadruples $(c_n,s_n,\sigma_n,\gamma_n)_{n\in \maN} \in \left(F\times (\mathbb S \times \mathbb C)\right)^{\maN}$.  
The array $(c_n,s_n,\sigma_n,\gamma_n)_{n\in \maN}$ is then called {\em input} of the EBM model. 
\end{definition}

\medskip

Observe that two classes of systems studied in the literature can be seen as special cases of
EBM models:
\begin{itemize}
\item The Bipartite Matching (BM) model corresponding to the version introduced in \cite{calkapwei09} is an EBM with $F=\maC\times\maS$.
\item The General Matching (GM) model, as introduced in \cite{MaiMoy16}, having a general matching graph $(\maC,R)$ on the set of vertices $\maC$, is an EBM with $S= \tilde{\maC}$, a disjoint copy of $\maC$, $F=\{(c,\tilde{c}), c\in \maC\}$ and 
$(\maC\cup\maS, E)$ is the bipartite double cover of $(\maC,R)$. 
\end{itemize}

\paragraph{An associated graph.} We consider the directed graph $(\maC \cup \maS,A)$ where the set of arcs $A$ is defined by:
\begin{itemi}
\item $(c,s) \in A$ if and only if $(c,s) \in E$ and
\item $(s,c) \in A$ if and only if $(c,s) \in F$.
\end{itemi}
As shown in Theorem 4.1 of \cite{BGM13}, irreducibility of the natural Markov representation of the system is closely related to the strong connectedness of 
the directed graph $A$. Observe the following, 
\begin{lemma}
\label{lemma:strongconnect}
In the BM model, if $(\maC \cup \maS,E)$ is connected then the graph $(\maC \cup \maS,A)$ is strongly connected. In the GM model, if the reduced graph $(\maC,R)$ is connected, then $(\maC \cup \maS,A)$ is strongly connected.
\end{lemma}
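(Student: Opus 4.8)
The plan is to treat the two assertions separately, exploiting in each case the explicit description of the arc set $A$: an arc $(c,s)$ exists exactly when $(c,s)\in E$, and an arc $(s,c)$ exists exactly when $(c,s)\in F$. In other words, every matching edge yields an arc oriented from the customer side to the server side, and every arrival edge yields an arc oriented from the server side to the customer side. To prove strong connectedness it suffices to show that between any two vertices there is a directed path; by the usual reduction it is enough to fix one vertex $v_0$ and show that every vertex is reachable from $v_0$ and that $v_0$ is reachable from every vertex.

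For the BM model, recall $F=\maC\times\maS$, so for every $c\in\maC$ and every $s\in\maS$ there is an arc $(s,c)\in A$; in particular, from any server vertex one can reach every customer vertex in one step. Now fix any customer vertex $c_0$. Given an arbitrary vertex $x$: if $x=c\in\maC$, pick any $s\in\maS(c_0)$ (nonempty since $\maG$ has no isolated vertices — indeed $\maG$ is connected and $|\maC|,|\maS|\ge 1$); then $c_0\to s\to c$ is a directed path using the matching arc $(c_0,s)$ and then the arrival arc $(s,c)$. If $x=s\in\maS$, take $c\in\maC(s)$ (again nonempty) so that $c_0\to s'\to c\to s$ for some $s'\in\maS(c_0)$ is a directed path. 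This shows every vertex is reachable from $c_0$. Conversely, from any vertex $x$ there is an arc back to $c_0$ or a two-step path to $c_0$: if $x=s$, the arc $(s,c_0)\in A$ exists since $(c_0,s)\in F$; if $x=c$, choose $s\in\maS(c)$, so $c\to s\to c_0$ works. Hence $(\maC\cup\maS,A)$ is strongly connected. One should double-check that the argument uses only connectedness of $\maG$ through the nonemptiness of each $\maS(c)$ and $\maC(s)$, which is exactly the "no isolated vertex" consequence of connectedness; this is the only subtle point, and it is mild.

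For the GM model, we have $\maS=\tilde\maC$ a disjoint copy of $\maC$, $F=\{(c,\tilde c):c\in\maC\}$, and $(\maC\cup\maS,E)$ is the bipartite double cover of $(\maC,R)$, meaning $(c,\tilde{c'})\in E$ iff $\{c,c'\}\in R$. Thus the arcs are: $c\to\tilde{c'}$ whenever $\{c,c'\}\in R$, and $\tilde c\to c$ for every $c\in\maC$. Assume $(\maC,R)$ is connected. The idea is to lift an undirected path in $(\maC,R)$ to a directed path in $A$: if $c=x_0,x_1,\dots,x_k=c'$ is a path in $(\maC,R)$, then $c\to\tilde{x_1}\to x_1\to\tilde{x_2}\to x_2\to\cdots\to\tilde{x_k}=\tilde{c'}\to c'$ is a directed path in $A$, alternately using an $E$-arc (legitimate since $\{x_{i},x_{i+1}\}\in R$) and the $F$-arc $\tilde{x_{i+1}}\to x_{i+1}$. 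This reaches any $c'\in\maC$ from any $c\in\maC$, and also any $\tilde{c'}\in\maS$ (stop one step earlier at $\tilde{c'}$); symmetrically, starting from $\tilde c\in\maS$ one first moves $\tilde c\to c$ and then applies the same lifting. Therefore $(\maC\cup\maS,A)$ is strongly connected.

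The only step requiring care — and the one I would expect to be the "main obstacle," though it is a minor one — is making sure the nonemptiness of the neighborhoods $\maS(c)$, $\maC(s)$ (BM case) and the correct translation of "bipartite double cover" into the arc structure (GM case) are invoked cleanly; once these are pinned down, both arguments are short path-lifting constructions with no real computation.
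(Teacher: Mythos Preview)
Your proof is correct. For the GM assertion it is essentially identical to the paper's: both lift an undirected path $i=x_0,x_1,\dots,x_k=j$ in $(\maC,R)$ to the directed path $i\to\tilde{x_1}\to x_1\to\cdots\to\tilde{x_k}\to j$ in $A$ (and its reverse companion), using that $(c,\tilde{c'})\in E$ whenever $c\v c'$ in $R$ and that $(\tilde c,c)\in A$ for all $c$.

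For the BM assertion your route differs from the paper's. The paper fixes an arbitrary pair $c\in\maC$, $s\in\maS$, takes an alternating $E$-path $c\v s_1\v c_1\v\cdots\v c_p\v s$ supplied by connectedness of $\maG$, and observes that this path can be traversed in $A$ in both directions (forward via $E$-arcs $c_\ell\to s_{\ell'}$ and $F$-arcs $s_\ell\to c_\ell$, backward symmetrically); strong connectedness for same-side pairs then follows by concatenation. Your argument instead fixes a hub $c_0$ and uses only that each $\maS(c)$ and $\maC(s)$ is nonempty together with the completeness of $F$, building $2$- or $3$-step paths through $c_0$. This buys you a slightly sharper statement---in the BM case you actually only need that $\maG$ has no isolated vertex, not full connectedness---at the modest cost of the ``hub'' reduction, which you handle correctly. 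Both arguments are short; yours is marginally more economical in its use of the hypothesis.
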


\begin{proof}
First consider a BM model of connected matching graph $(\maC \cup \maS, E)$, and let $c \in \maC$ and $s \in\maS$. There exists an alternating path 
$c \v s_1 \v c_1 \v ... \v s_{p} \v c_p \v s$ connecting $c$ to $s$ in $(\maC \cup \maS, E)$. Thus, as $F=\maC \times\maS$, 
$(\maC \cup \maS, A)$ contains both the oriented path $c \to s_1 \to c_1 \to s_2 \to c_2 \to ... \to s_p \to c_p \to s$ 
and the oriented path $j \to c_p \to s_p \to c_{p-1} \to s_{p-1} \to ... \to c_1 \to s_1 \to c$. Thus the oriented graph $(\maC \cup \maS,A)$ is strongly connected. 

We now consider a GM model of connected reduced graph $(\maC,R)$. Let $i \in \maC$ and $\tilde j \in\maS=\tilde{\maC}.$ 
By the connectedness there exists a path $i \v k_1 \v ... \v k_p \v j$ between $i$ and $j$ in $(\maC,R)$. As ($c, \tilde c') \in E$ for any $c \v c' \in \maC$ and $(c,\tilde c) \in F$ for any $c \in \maC$, 
$(\maC \cup \maS, A)$ contains both paths $i \to \tilde k_1 \to k_1 \to \tilde k_2 \to k_2 \to ... \to \tilde k_p \to k_p \to \tilde j$ 
and $\tilde j \to j \to \tilde k_p \to k_p \to \tilde k_{p-1} \to ... \to k_1 \to \tilde i \to i$; so it is strongly connected. 
\end{proof}

\subsection{State spaces}
\label{subsec:statespace}
Let $\maN$ be a set of cardinality $N$, identified with $\llbracket 1,N \rrbracket$. Fix an EBM model of input $(c_n,s_n,\sigma_n,\gamma_n)_{n\in \maN}$. 
Let $\mc=c_1...c_N$, and likewise for $\ms$, $\msg$ and $\mga$. 
In this case we will call for short {\em admissible input} of the EBM model, the couple of words $(\mc,\ms)$. 
Then, for a given admissible policy (to be properly defined in sub-section \ref{subsec:pol}), there exists a unique {\em matching} of the admissible input $(\mc,\ms)$, that is, a bipartite graph having set of nodes 
$\left\{c_1,...,c_{N}\right\}\cup \left\{s_1,...,s_{N}\right\}$, whose edges represent the matchings of the corresponding customers and servers. 
This matching is denoted $M_\phi(\mc,\ms,\msg,\mga)$. In turn, the {\em buffer detail} of $M_\phi(\mc,\ms,\msg,\mga)$, denoted by $Q_\phi(\mc,\ms,\msg,\mga)$, 
is the couple of words 
\[Q_\phi(\mc,\ms,\msg,\mga):=\Bigl(C_\phi(\mc,\ms,\msg,\mga)\,,\,S_\phi(\mc,\ms,\msg,\mga)\Bigl) \in \maC^*\times \maS^*,\] 
such that $C_\phi(\mc,\ms,\msg,\mga)$ (resp., $S_\phi(\mc,\ms,\msg,\mga)$) is the sub-word of $\mc$ (resp., $\ms$) 
whose letters are the classes of the unmatched customers in $\mc$ (resp. of the unmatched servers in $\ms$), in order of arrivals. 
Observe that the definitions of $M_\phi(\mc,\ms,\msg,\mga)$ and $Q_\phi(\mc,\ms,\msg,\mga)$ can be extended to words $\mc$ and $\ms$ of different sizes, as follows: if $\mc$ is of length $N$ and $\ms$ is of length 
$M$ for $N \ne M$ (say e.g. $N > M$), then we consider that $N-M$ customers first enter the system alone, and then $M$ arrivals occur by couples; in other words for all 
$n \in \llbracket 0,M-1 \rrbracket$, the couple $\left(c_{N-n},s_{M-n}\right)$ enters the system contemporarily, with lists of preferences $\left(\sigma_{N-n},\gamma_{M-n}\right)$, 
and for all $n \in \llbracket 1,N-1 \rrbracket$, we let $c_{N-n}$ be the class of a single customer, with an (irrelevant) arbitrary list of preferences.  

Any admissible buffer detail belongs to the set 
\begin{equation}
\CE = \Bigl\{ (\mw,\mz) \in \maC^*\times \maS^* \; : \; 
     \forall  (i,j) \in E, \; |\mw|_i|\mz|_j = 0  \Bigr\}.\label{eq-ncss}
     \end{equation}
We will denote shortly by $\emptystate$ the state $(\emptyword,\emptyword) \in \CE$, representing the empty buffers. 
Observe that any state $(\mw,\mz)$ corresponding to the input $(c_i,\sigma_i)_{i\le N}$ and $(s_j,\gamma_j)_{j\le M}$ as above, 
is such that $|\mw|=|\mz|$ if $N=M$. However we will often work in greater generality, so any state of $\CE$ is admissible. 
We denote by $\CE_0$, the subset of admissible states having equal numbers of customers and servers, i.e. 
 \begin{equation}
\CE_0 = \Bigl\{ (\mw,\mz) \in \CE \; : \; |\mw|=|\mz|\Bigr\}.\label{eq-ncss}
     \end{equation}
Whenever the matching policy $\phi$ is such that the preference permutations are irrelevant, we will just drop this parameter from all notation, and write  
e.g. $M_\phi(\mc,\ms)$ and $Q_\phi(\mc,\ms)$ for the matching of $\mc$ and $\ms$ and for the buffer detail of that matching, respectively.

As will be seen below, depending on the matching policy we can also restrict the available information on the state of the system, to a vector only keeping track of 
the number of customers and servers of the various classes remaining unmatched after the matching of the finite words $\mc$ and $\ms$, that is, of the couple of 
commutative images of $C_\phi(\mc,\ms,\msg,\mga)$ and $S_\phi(\mc,\ms,\msg,\mga)$. In such cases this restricted state, which we will be called {\em class detail} of the system, takes values in the set  
\begin{equation}
\mathbb E = \Bigl\{(x,y)\in \N^{|\maC|}\times
\N^{|S|}\,:\,x(i)y(j)=0\mbox{ for any }(i,j)\in E\Bigl\}=\Bigl\{\left[(\mw,\mz)\right];\,(\mw,\mz) \in \CE\Bigl\}.\label{eq-css}
\end{equation} 

\subsection{Matching policies}
\label{subsec:pol}
We now formally describe the main matching policies we consider. 
In all cases, we make the following {\em buffer-first} assumption: an incoming couple $(c,s)$ is matched together if and only if
$(c,s) \in E$, $c$ found no match in the buffer of servers {\em and} $s$ found no match in the buffer of customers.  

\begin{definition}
\rm
A matching policy $\phi$ is said {\em admissible} if the choice of matches of an incoming couple $(c,s)$ 
depends {\em only} on the buffer detail and the couple of lists of preferences $(\sigma,\gamma)$.  
\end{definition} 
In other words, if a matching policy $\phi$ is admissible there exists a mapping $\odot_{\phi}: \CE \times (F \times (\mathbb S \times \mathbb C)) \rightarrow \CE$ such that, 
denoting by $(\mw,\mz)$ the buffer detail corresponding to a given input, and by $(\mw',\mz')$ the buffer detail if the latter input is augmented by the quadruple $(c,s,\sigma,\gamma)$, then 
$(\mw',\mz')$ and $(\mw,\mz)$ are connected by the relation 
\begin{equation}
\label{eq:defodot}
(\mw',\mz')= (\mw, \mz) \odot_{\phi} (c,s,\sigma,\gamma).
\end{equation}

\paragraph{First Come, First Served.} 
In First Come, First Served ({\sc fcfs}), the lists of preference are irrelevant, and are erased from all notation for short. 
The map $\odot_{\textsc{fcfs}}$ is then given for all $(\mw,\mz) \in \CE$ and all couples $(c,s)$ by 
$$
(\mw, \mz) \odot_{\textsc{fcfs}} (c,s) =
\left \{
\begin{array}{ll}
(\mw c, \mz s), & \textrm{if } \; |\mw|_{\maC(s)} = 0, \;|\mz|_{\maS(c)} = 0, \;
  (c,s) \not\in E \\
(\mw, \mz), & \textrm{if } \; |\mw|_{\maC(s)} = 0, \; |\mz|_{\maS(c)} = 0, \; (c,s)\in E\\
(\mw_{\left [\Phi(w,s)\right]}, \mz_{\left [\Psi(z,c)\right]}), & \textrm{if } \; |\mw|_{\maC(s)} \not= 0, \; |\mz|_{\maS(c)} \not= 0\\
(\mw_{\left [\Phi(w,s)\right]}c, z), & \textrm{if } \; |\mw|_{\maC(s)} \not= 0, \; |\mz|_{\maS(c)} = 0\\
(\mw, \mz_{\left [\Psi(z,c)\right]}s), & \textrm{if } \; |\mw|_{\maC(s)} = 0, \; |\mz|_{\maS(c)} \not= 0,
\end{array}
\right .
$$
with functions $\Phi$ and $\Psi$ as follows:
$$\Phi(\mw, s) = \arg\min \{w_k \in \maC(s)\}, \quad \Psi(\mz, c) = \arg\min \{z_k \in \maS(c)\}.$$

\paragraph{Last Come, First Served.}The lists of preferences are  again irrelevant; the map $\odot_{\textsc{lcfs}}$ is analog to $\odot_{\textsc{fcfs}}$, for 
$$\Phi(\mw,s) = \arg\max \{w_k \in \maC(s)\}, \quad \Psi(\mz, c) = \arg\max \{z_k \in \maS(c)\}.$$

\begin{definition}
\rm
A matching policy $\phi$ will be said {\em class-admissible} if it is fully characterized by 
two mappings $p_\phi$ and $q_\phi$ from $\N^{|S|} \times \maC \times \mathbb S$ to $S$ (resp., $\N^{|\maC|} \times\maS \times \mathbb C$ to $\maC$) such that $p_\phi(y,c,a)$ (resp. $q_\phi(x,s,b)$) 
determines the class of the match (if any) chosen by the entering $c$-customer (resp. $s$-server) under $\phi$ in a system of class detail $(x,y)$, for the lists of preferences $(\sigma,\gamma)$.  
\end{definition}


Let us define for any $(c,s) \in F$ and 
$(x,y) \in \mathbb E$,
\begin{align*}
\mathscr P(y,c) &=\Bigl\{j\in \maS(c)\,:\,y\left(j\right) > 0\Bigl\};\\
\maQ(x,s) &=\Bigl\{i\in \maC(s)\,:\,x\left(i\right) >
0\Bigl\},\label{eq:setP2}
\end{align*}
which represent the set of classes of available compatible servers
(resp. customers) with the entering customer $c$ (resp. server $s$),
if the class detail of the system is given by $(x,y)$. 
As is easily seen, for a class-admissible $\phi$, the arrival of $(c,s) \in F$ and the draw of $(\sigma,\gamma)$ from $\nu^\phi$ correspond to the following action on
the class detail of the system,
\begin{equation}
\label{eq:defccc}
(x, y) \ccc_{\phi} (c,s,\sigma,\gamma) = \left \{
\begin{array}{ll}
(x+\gre_c,y+\gre_s), &\mbox{ if }\maP(y,c)=\emptyset,\,
\maP(x,s)=\emptyset,\,(c,s)\not\in E,\\
(x,y), &\mbox{ if }\maP(y,c)=\emptyset,\, \maP(x,s)=\emptyset,\,(c,s)\in E,\\
(x,y+\gre_s-\gre_{p_\phi(y,c,a)}), &\mbox{ if }\maP(y,c)\ne
\emptyset,\,\maP(x,s)=\emptyset,\\
(x+\gre_c-\gre_{q_\phi(x,s,b)},y), &\mbox{ if }\maP(y,c)=\emptyset,\,\maP(x,s)\ne
\emptyset,\\
(x-\gre_{q_\phi(x,s,b)},y-\gre_{p_\phi(y,c,a)}), &\mbox{ if
}\maP(y,c)\ne \emptyset,\,\maP(x,s)\ne
\emptyset.
\end{array}
\right .
\end{equation}

\begin{remark}
\label{rem:equiv}
\rm
The same observation as in Remark 1 in \cite{MBM17} holds: to any class-admissible policy $\phi$ corresponds an admissible policy, e.g. by specifying that 
the rule of choice within class is FCFS (i.e. the customer/server chosen is the {\em oldest} one in line within its class). Then any class-admissible policy $\phi$ is admissible, i.e. the mapping $\ccc_\phi$ from 
$\mathbb E \times (F \times (\mathbb S \times \mathbb C))$ to $\mathbb E$ can be detailed into a map $\odot_{\phi}$ from 
$\CE \times (F \times (\mathbb S \times \mathbb C))$ to $\CE$, as in (\ref{eq:defodot}), such that for any buffer detail $(\mw,\mz)$ and any $(c,s,\sigma,\gamma)$, 
denoting again $(\mw',\mz')=(\mw,\mz)\odot_\phi (c,s,\sigma,\gamma)$ we have that 
\[\left([\mw'],[\mz']\right) = \left([\mw],[\mz]\right)\ccc_\phi (c,s,\sigma,\gamma).\] 
\end{remark}

\paragraph{Random policies.} 
Here the only information that is needed to determine the choice of matches for the incoming items, is whether their various compatible classes have an empty queue or not. 
Specifically, the considered customer/server investigates its compatible classes 
of servers/customers in the order induced by the lists of preferences upon arrival, until it finds one having a non-empty buffer, if any. The customer/server is then matched with a server/customer of the latter class. 
In other words, we set   
\begin{align*}
p_{\textsc{rand}}(y,c,\sigma) &=\sigma(c)[k],\mbox{ where }k=\min \Bigl\{i \in
[\llbracket 1,|\maS(c)| \rrbracket\,:\,\sigma(c)[i]\in \maP(y,c)\Bigl\};\\
q_{\textsc{rand}}(x,s,\gamma) &=\gamma(s)[\ell],\mbox{ where }\ell=\min \Bigl\{j \in
\llbracket 1,|\maC(s)| \rrbracket\,:\,b(s)[j]\in \maQ(x,s)\Bigl\}.
\end{align*}
We call such policies 'Random' (and denote {\sc rand}) since, as will be formalized in Section \ref{subsec:NcondScond}, the lists of preference may be random, drawn from a given distribution on $\mathbb S \times \mathbb C$. 
The particular case where these lists are deterministic corresponds to a (strict) priority policy.

\paragraph{Match the Longest.}
In the 'Match the Longest' policy ({\sc ml}), the newly arrived customer/server chooses
a server/customer of the compatible class that has the longest line (not including the other incoming item 
whenever it is compatible). Ties between classes having queues of the same length are broken using the list of preference at this time. 
Formally, set for all $(c,s)$
and $(x,y)$ such that $\mathcal P(y,c) \ne \emptyset$ and $\mathcal P(x,s) \ne \emptyset$, 
\begin{align*}
L(y,c) &=\max\left\{y(j)\,:\,j \in \maS(c)\right\}\,\quad\mbox{ and }\quad\,
\maL(y,c) =\left\{i\in \llbracket 1,\maS(c) \rrbracket\,:\,y\left(i\right)=L(y,c)\right\}\subset \maP(y,c),\\
M(x,s) &=\max\left\{x(i)\,:\,i \in \maC(s)\right\}\,\quad\mbox{ and }\quad\,
\maM(x,s) =\left\{i\in \llbracket 1,\maC(s) \rrbracket\,:\,x\left(i\right)=M(x,s)\right\}\subset \maQ(x,s).
\end{align*}
\begin{align*}
p_{\textsc{ml}}(y,c,\sigma) &=\sigma(c)[k],\mbox{ where }k=\min \Bigl\{i \in
\llbracket 1,|\maS(c)| \rrbracket:\,\sigma(c)[i]\in \maL(y,c)\Bigl\};\\
q_{\textsc{ml}}(x,s,\gamma) &=\gamma(s)[\ell],\mbox{ where }\ell=\min \Bigl\{j \in
\llbracket 1,|\maC(s)| \rrbracket:\,\gamma(s)[j]\in \maM(x,s)\Bigl\}.
\end{align*}

\paragraph{Match the Shortest.}
'Match the Longest' {\sc ms} is defined analogously to {\sc ml}, except that the shortest queue is chosen instead of
the longest.

\section{Sub-additivity}
\label{sec:subadd} 
In this section we show that, under most matching policies we have introduced we have introduced above, the EBM model satisfies a sub-additivity property that will prove crucial in the construction of a backwards scheme.  
\begin{definition}[Sub-additivity]
\label{def:subadd}
An admissible matching policy $\phi$ is said to be {\em sub-additive} if, 
for all $\mc',\mc''\in \maC^*$, $\ms',\ms''\in \maS^*$, $\msg',\msg'' \in \mathbb S^*$ and $\mga',\mga'' \in \mathbb C^*$ such that $|\mc'|=|\ms'|=|\msg'|=|\mga'|$ and $|\mc''|=|\ms''|=|\msg''|=|\mga''|$, 
we have that 
\begin{align*}
\left|C_\phi\left(\mc'\mc'',\ms'\ms'',\msg'\msg'',\mga'\mga''\right)\right| &\leq \left|C_\phi\left(\mc',\ms',\msg',\mga'\right)\right| + \left|C_\phi\left(\mc'',\ms'',\msg'',\mga''\right)\right|;\\ 
\left|S_\phi\left(\mc'\mc'',\ms'\ms'',\msg'\msg'',\mga'\mga''\right)\right| &\leq \left|S_\phi\left(\mc',\ms',\msg',\mga'\right)\right| + \left|S_\phi\left(\mc'',\ms'',\msg'',\mga''\right)\right|.
\end{align*}
\end{definition}
It is proven in Lemma 4 of \cite{ABMW17} that FCFS is sub-additive for the BM model; as the sub-additivity is purely algebraic and does not depend on the statistics of the input, 
this also clearly holds true for the EBM model. Also, Proposition 3 in 
\cite{MBM17} shows that a similar sub-additive property is satisfied for the GM model for various matching policies. 
The result below generalizes the former result to a larger class of matching policies, thereby providing an analog to the latter result in the bipartite case,  
\begin{proposition}
\label{prop:sub}
The matching policies {\sc fcfs}, {\sc lcfs}, {\sc rand} and {\sc ml} are sub-additive. 
\end{proposition}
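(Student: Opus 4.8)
The plan is to prove the two sub-additivity inequalities simultaneously by a coupling/monotonicity argument: feed the concatenated input $(\mc'\mc'',\ms'\ms'',\msg'\msg'',\mga'\mga'')$ into the system, and compare the buffer obtained after processing the whole input with the buffer one would obtain by treating the two halves independently. Concretely, I would fix $k=|\mc'|$ and, after the first $k$ pairs have been matched, the buffer is $Q_\phi(\mc',\ms',\msg',\mga')=(\mw',\mz')$. The inequality amounts to showing that adding this non-empty ``residue'' $(\mw',\mz')$ to the front of the second input $(\mc'',\ms'',\msg'',\mga'')$ cannot increase the number of unmatched customers (resp. servers) by more than $|\mw'|$ (resp. $|\mz'|$) compared to starting the second block from $\emptystate$. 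Equivalently: for any admissible initial state $(\mw,\mz)$ and any input block of $n$ pairs, writing $(\mw_f,\mz_f)$ for the final buffer starting from $(\mw,\mz)$ and $(\hat\mw_f,\hat\mz_f)$ for the final buffer starting from $\emptystate$, one has $|\mw_f|\le|\mw|+|\hat\mw_f|$ and $|\mz_f|\le|\mz|+|\hat\mz_f|$. I would establish this by induction on $n$, so the core is a \emph{one-step} comparison lemma.

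For the one-step lemma I would introduce a partial order on states: say $(\mw,\mz)\preceq(\bw,\mathbf{v})$ if $\mw$ is obtained from $\bw$ by deleting some letters \emph{and} $\mz$ is obtained from $\mathbf v$ by deleting some letters (a ``sub-buffer'' relation); note $(\mw_f,\mz_f)$ starting from $(\mw,\mz)$ with the residue prepended is exactly $(\mw,\mz)$-augmented, and what must be shown is that processing one more pair $(c,s,\sigma,\gamma)$ from a larger state keeps you larger, in the sense that $|\mw_{\text{new}}|-|\mw_{\text{old}}|\le |\mw'_{\text{new}}|-|\mw'_{\text{old}}|$ where the primed quantities are the smaller-state versions, and symmetrically for $\mz$; here ``difference'' is $+1$, $0$, or $-1$ per step. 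This is a finite case check driven by the five branches of $\odot_\phi$ (or $\ccc_\phi$ via Remark~\ref{rem:equiv} for the class-admissible policies {\sc rand}, {\sc ml}): the only way the larger state can do worse is if the incoming $c$ finds a match in the larger server-buffer but not in the smaller one while simultaneously the incoming $s$ finds no match in either customer-buffer — but the buffer-first/admissibility structure and the fact that both buffers lie in $\CE$ rules out the bad combinations, because a match consumed on one side is a match ``available'' to be consumed, and an unmatched arrival added to a larger buffer is also added to the smaller one.

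The step I expect to be the main obstacle is verifying that this monotonicity genuinely holds for \emph{each} of {\sc fcfs}, {\sc lcfs}, {\sc rand}, {\sc ml} rather than failing on a technicality: for {\sc fcfs}/{\sc lcfs} the selection functions $\Phi,\Psi$ (oldest/youngest compatible) interact with letter-deletion in a subtle way — deleting a letter from $\mz$ can change which server $\Psi$ selects — so I would need a sharper invariant than ``sub-buffer'', likely tracking that at every time the larger buffer's unmatched-customer word \emph{contains} (as a subsequence, with matching order of arrivals) the smaller one's, and that every match performed in the smaller system is also performed, on the ``same'' arrival, in the larger one. For {\sc ml}/{\sc ms} (note: the paper only claims {\sc ml}, not {\sc ms}) the longest-queue tie-break makes the per-class counts, not the words, the right state descriptor, so I would run that case entirely at the level of $\ccc_\phi$ on $\mathbb E$; the invariant becomes a coordinatewise inequality $x\le\hat x+x_0$, $y\le\hat y+y_0$ and the case analysis is the discrete ``Match the Longest is monotone'' computation already present in spirit in Proposition~3 of \cite{MBM17}. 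Once the one-step lemma holds in each case, the induction on block length and then the reduction to the definition of sub-additivity (specializing $(\mw,\mz)=(\mw',\mz')=Q_\phi(\mc',\ms',\msg',\mga')$, so $|\mw|+|\mz|=|C_\phi(\mc',\ms',\msg',\mga')|+|S_\phi(\mc',\ms',\msg',\mga')|$) is routine bookkeeping.
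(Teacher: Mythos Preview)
Your high-level reduction is exactly the one the paper uses: compare the trajectory started from the residue $Q_\phi(\mc',\ms',\msg',\mga')$ with the trajectory started from $\emptystate$, both fed the second block. The gap is in the one-step lemma. Your proposed invariant---that the increment $|\mw_{\text{new}}|-|\mw_{\text{old}}|$ in the larger system is bounded above by the corresponding increment in the smaller one at \emph{every} step---is false, for every admissible policy. On the `NN' graph of Figure~\ref{Fig:NN}, start the larger system from $(\mw',\mz')=(3,\bar 1)$ and the smaller from $\emptystate$, and feed both the input $(1,\bar 3)$ then $(3,\bar 1)$. After the first pair the larger system is empty and the smaller holds $(1,\bar 3)$; after the second pair the larger holds $(3,\bar 1)$ and the smaller is empty. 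At that second step the customer-buffer increment is $+1$ in the larger system and $-1$ in the smaller, so your inequality fails. (All matches here are forced, so the counterexample is policy-independent.) The coordinatewise invariant $x\le\hat x+x_0$ you propose for {\sc ml} fails for the same reason: the two trajectories can swap roles.

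What the paper does instead is to track, not an order, but the $\ell_1$ distance between the two class-detail trajectories: it proves that {\sc rand} and {\sc ml} are \emph{non-expansive}, i.e.\ $\|(x',y')\ccc_\phi(c,s,\sigma,\gamma)-(x,y)\ccc_\phi(c,s,\sigma,\gamma)\|\le\|(x',y')-(x,y)\|$, via a long case analysis (Appendix). This metric invariant survives the swapping in the counterexample above (the distance stays at $2$ throughout), and a triangle-inequality argument then yields the two sub-additivity bounds simultaneously. Crucially, the paper also states that {\sc fcfs} and {\sc lcfs} are \emph{not} non-expansive, so no one-step comparison of this kind can work for them; {\sc fcfs} is handled by citing \cite{ABMW17}, and {\sc lcfs} by a direct ``rematching cascade'' argument (prepend one extra letter to $(\mc'',\ms'')$, trace how the {\sc lcfs} matching is perturbed, show the unmatched count rises by at most one, then induct). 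Your sketch for {\sc fcfs}/{\sc lcfs} gestures at a subsequence invariant but does not supply one, and the non-expansiveness route is closed for those two policies.
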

Before proving Proposition \ref{prop:sub} let us observe that, similarly to Example 3 of \cite{MBM17}, 
\begin{ex}[{\sc ms} is not sub-additive]
\label{ex:MS}
\rm
Take as a matching graph, the graph of Figure \ref{Fig:NN}, and the arrival scenario depicted in Figure \ref{Fig:MS}.
\begin{figure}[h!]
\begin{center}
\begin{tikzpicture}
\draw[-] (1,4) -- (1,1);
\draw[-] (4,4) -- (4,1);
\draw[-] (4,4) -- (1,4);
\draw[-] (4,1) -- (1,1);
\draw[-] (1.5,3) -- (1.5,2); 
\draw[-] (1.5,3) -- (2.5,2); 
\draw[-] (2.5,3) -- (2.5,2);
\draw[-] (2.5,3) -- (3.5,2);
\draw[-] (3.5,2)-- (3.5,3); 
\fill (1.5,3) circle (2.5pt) node[above] {\small{1}} ;
\fill (2.5,3) circle (2.5pt) node[above] {\small{2}} ;
\fill (3.5,3) circle (2.5pt) node[above] {\small{3}} ;
\fill (1.5,2) circle (2.5pt) node[below] {\small{$\bar 1$}} ;
\fill (2.5,2) circle (2.5pt) node[below] {\small{$\bar 2$}} ;
\fill (3.5,2) circle (2.5pt) node[below] {\small{$\bar 3$}} ;
\end{tikzpicture}
\caption[smallcaption]{The 'NN' graph.} \label{Fig:NN}
\end{center}
\end{figure}
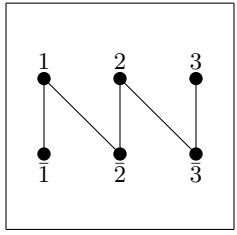

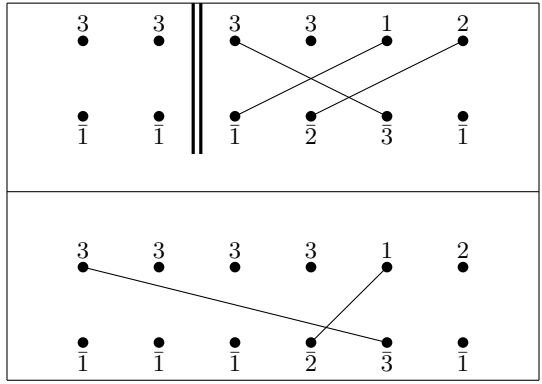
\begin{figure}[h!]
\begin{center}
\begin{tikzpicture}
\draw[-] (-1,0.5) -- (6,0.5);
\draw[-] (-1,0.5) -- (-1,-4.5);
\draw[-] (-1,-2) -- (6,-2);
\draw[-] (-1,-4.5) -- (6,-4.5);
\draw[-] (6,-4.5) -- (6,0.5);
\fill (0,0) circle (2pt) node[above] {\small{3}} ;
\fill (1,0) circle (2pt) node[above] {\small{3}} ;
\draw[-,very thick] (1.45,0.5) -- (1.45,-1.5);
\draw[-,very thick] (1.55,0.5) -- (1.55,-1.5);
\fill (2,0) circle (2pt) node[above] {\small{3}} ;
\draw[-] (2,0)-- (4,-1);
\fill (3,0) circle (2pt) node[above] {\small{3}} ;
\fill (4,0) circle (2pt) node[above] {\small{1}} ;
\draw[-] (4,0)-- (2,-1);
\fill (5,0) circle (2pt) node[above] {\small{2}} ;
\draw[-] (5,0)-- (3,-1);
\fill (0,-1) circle (2pt) node[below] {\small{$\bar 1$}} ;
\fill (1,-1) circle (2pt) node[below] {\small{$\bar 1$}} ;
\fill (2,-1) circle (2pt) node[below] {\small{$\bar 1$}} ;
\fill (3,-1) circle (2pt) node[below] {\small{$\bar 2$}} ;
\fill (4,-1) circle (2pt) node[below] {\small{$\bar 3$}} ;
\fill (5,-1) circle (2pt) node[below] {\small{$\bar 1$}} ;  
\fill (0,-3) circle (2pt) node[above] {\small{3}} ;
\draw[-] (0,-3)-- (4,-4);
\fill (1,-3) circle (2pt) node[above] {\small{3}} ;
\fill (2,-3) circle (2pt) node[above] {\small{3}} ;
\fill (3,-3) circle (2pt) node[above] {\small{3}} ;
\fill (4,-3) circle (2pt) node[above] {\small{1}} ;
\draw[-] (4,-3)-- (3,-4);
\fill (5,-3) circle (2pt) node[above] {\small{2}} ;
\fill (0,-4) circle (2pt) node[below] {\small{$\bar 1$}} ;
\fill (1,-4) circle (2pt) node[below] {\small{$\bar 1$}} ;
\fill (2,-4) circle (2pt) node[below] {\small{$\bar 1$}} ;
\fill (3,-4) circle (2pt) node[below] {\small{$\bar 2$}} ;
\fill (4,-4) circle (2pt) node[below] {\small{$\bar 3$}} ;
\fill (5,-4) circle (2pt) node[below] {\small{$\bar 1$}} ;
\end{tikzpicture}
\caption[smallcaption]{A non sub-additive {\sc ms} matching on the 'NN' graph.} \label{Fig:MS}
\end{center}
\end{figure}
As illustrated in Figure \ref{Fig:MS} we have 
\begin{align*}
\left|C_{\textsc{ms}}(333312,\bar 1\bar 1\bar 1\bar 2 \bar 3 \bar 1)\right| &= |3332| > |33| + |3| = \left|C_{\textsc{ms}}(33,\bar 1\bar 1)\right| + \left|C_{\textsc{ms}}(3312,\bar 1\bar 2 \bar 3 \bar 1)\right|;\\
\left|S_{\textsc{ms}}(333312,\bar 1\bar 1\bar 1\bar 2 \bar 3 \bar 1)\right| &= |\bar 1\bar 1\bar 1\bar 1| > |\bar 1\bar 1| + |\bar 1| = \left|S_{\textsc{ms}}(33,\bar 1\bar 1)\right| + \left|S_{\textsc{ms}}(3312,\bar 1\bar 2 \bar 3 \bar 1)\right|.
\end{align*}
\end{ex} 

The remainder of this section is devoted to the proof of Proposition \ref{prop:sub}. The result is already known for $\phi=\textsc{fcfs}$ (this is Lemma 4 in \cite{ABMW17}). 
The proof for non-expansive matching policies (including {\sc rand} and {\sc ml}) is given in Sub-section \ref{subsec:nonexp}, and that for {\sc lcfs} in Sub-section \ref{subsec:lcfs}.  

\subsection{Non-expansiveness} 
\label{subsec:nonexp}
The non-expansiveness of the class detail, as defined in \cite{MoyPer17} for GM models, is a 
Lipschitz property of the driving map of the recursion that has interest for constructing stochastic approximations of the model 
under consideration (see Section 6 in \cite{MoyPer17}). As we show in Proposition \ref{prop:nonexp}, this property is in fact stronger than 
sub-additivity for the buffer detail sequence,  
\begin{definition}
A class-admissible policy $\phi$ is said {\em non-expansive} if 
for any $(x,y)$ and $(x',y')$ in $\mathbb E$, and for any $(c,s) \in F$ and any $(\sigma,\gamma) \in \mathbb S\times \mathbb C$ 
that can be drawn by $(\nu_\phi,\rho_\phi)$, 
\begin{equation}
\label{eq:defnonexp1}
\|(x',y')\ccc_{\phi}(c,s,\sigma,\gamma) - (x,y)\ccc_{\phi}(c,s,\sigma,\gamma)\| \le \|(x',y')-(x,y)\|.
\end{equation}
\end{definition} 

The following results, transposing Lemma 7 in \cite{MoyPer17} and Propositions 4 and 5 in \cite{MBM17} to the EBM model, are proven in Appendix, 

\begin{proposition}
\label{prop:nonexp1} 
Any {\sc rand} matching policy is non-expansive. 
\end{proposition}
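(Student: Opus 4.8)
The plan is to prove that any {\sc rand} policy satisfies the non-expansiveness inequality \eqref{eq:defnonexp1} by a direct case analysis on the relative positions of the two class details $(x,y)$ and $(x',y')$ together with the arrival $(c,s)$ and the preference lists $(\sigma,\gamma)$. The crucial simplification is that, under {\sc rand}, the class of the match chosen by the entering $c$-customer (resp. $s$-server) is a deterministic function of which of its compatible server classes (resp. customer classes) are empty, read in the fixed order $\sigma(c)$ (resp. $\gamma(s)$): it is the first non-empty one. So the update map $\ccc_{\textsc{rand}}(c,s,\sigma,\gamma)$ is piecewise constant in the state, and the ``active'' coordinates are determined by the emptiness pattern of the relevant neighbourhoods.

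First I would reduce to the generic and hardest case, namely $\maP(y,c)\ne\emptyset$, $\maP(x,s)\ne\emptyset$, $\maP(y',c)\ne\emptyset$, $\maP(x',s)\ne\emptyset$, where all four updates remove exactly one customer and one server. Writing $j=p_{\textsc{rand}}(y,c,\sigma)$, $j'=p_{\textsc{rand}}(y',c,\sigma)$, $i=q_{\textsc{rand}}(x,s,\gamma)$, $i'=q_{\textsc{rand}}(x',s,\gamma)$, the left-hand side of \eqref{eq:defnonexp1} is $\|(x-\gre_i, y-\gre_j)-(x'-\gre_{i'},y'-\gre_{j'})\|$. Because the $x$-part and the $y$-part decouple in the $\ell_1$-norm, it suffices to treat the server side: show $\|(y-\gre_j)-(y'-\gre_{j'})\|\le\|y-y'\|$, and symmetrically on the customer side. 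If $j=j'$ this is immediate. If $j\ne j'$, I would argue that the first non-empty entry of $\sigma(c)$ differing between $y$ and $y'$ forces $y(j)$ and $y'(j')$ to be comparatively large while $y(j')=0$ or $y'(j)=0$ (one of the two coordinates that gets decremented is already $0$ in the other vector, since otherwise the earlier-in-$\sigma(c)$ of $j,j'$ would have been selected in both), so that decrementing shifts mass in the direction that does not increase the discrepancy; making this precise is the main obstacle and the real content of the lemma. Concretely: say $j$ precedes $j'$ in $\sigma(c)$; then $j\notin\maP(y',c)$, i.e. $y'(j)=0$, while $y(j)\ge 1$, so $|y(j)-1-y'(j)|=|y(j)-y'(j)|-1$, a gain of $1$ in coordinate $j$; in coordinate $j'$ we lose at most $1$ since $|y(j')-(y'(j')-1)|\le |y(j')-y'(j')|+1$; all other coordinates are unchanged. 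Hence the $j$-contribution drops by $1$ and the $j'$-contribution rises by at most $1$, and the net change is $\le 0$.

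Then I would dispatch the remaining (easier) cases: when one or both of the four ``$\maP$'' sets relevant to the updates are empty. For each such configuration one of the update rules in \eqref{eq:defccc} either adds a basis vector ($(c,s)\notin E$) or does nothing ($(c,s)\in E$) or decrements only on one side; in every instance the change to the discrepancy in each coordinate is $\pm 1$ and a short sign count, of exactly the flavour above, gives the bound. A useful bookkeeping remark is that whenever $\maP(y,c)=\emptyset$ but $\maP(y',c)\ne\emptyset$, compatibility in $\CE$ (i.e. the constraint defining $\mathbb E$) constrains which coordinates can be nonzero, which is what prevents the lone decrement on the $(x',y')$ side from ever enlarging the gap. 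I would organise the write-up as a single lemma handling the one-coordinate estimate
\[
\big|(a-\one_{a>0})-(a'-\one_{a'>0})\big|\le |a-a'| \quad\text{for } a,a'\in\N,
\]
plus the pairwise ``$j$ before $j'$'' argument, and then simply tabulate the five-by-five case split, each line reducing to these two facts. Since the statement is purely algebraic, no probabilistic input about $(\nu_\phi,\rho_\phi)$ is used beyond the hypothesis that $(\sigma,\gamma)$ is a fixed admissible pair, exactly as in Lemma 7 of \cite{MoyPer17}.
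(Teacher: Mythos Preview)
Your approach is correct and coincides with the paper's: both hinge on the same structural fact about {\sc rand}---when the two systems select different match classes $j\ne j'$, the $\sigma(c)$-earlier of the two must be empty in the other system, so the decrement there gains exactly $1$ in $\ell_1$-distance, offsetting the at-most-$1$ loss at the other coordinate---followed by a case split on the emptiness patterns of $\maP$ and $\maQ$ (the paper states the key fact as a ``consistency property'' and then enumerates about twenty-five sub-cases, including a separate treatment of $(c,s)\in E$ via the buffer-first rule). One small correction: your remark that ``compatibility in $\mathbb E$'' is what saves the mixed case $\maP(y,c)=\emptyset$, $\maP(y',c)\ne\emptyset$ is a red herring---the constraint $x(i)y(j)=0$ defining $\mathbb E$ plays no role; what matters is simply that $\maP(y,c)=\emptyset$ forces $y(\ell')=0$, so the decrement $y'(\ell')\to y'(\ell')-1$ strictly shrinks that coordinate's discrepancy and compensates the possible $+1$ elsewhere.
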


\begin{proposition}
\label{prop:nonexp2} 
{\sc ml} is non-expansive. 
\end{proposition}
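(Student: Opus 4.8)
The plan is to adapt to the bipartite setting the argument behind Lemma~7 of \cite{MoyPer17} and Proposition~5 of \cite{MBM17}, the two new ingredients being that the class detail carries \emph{two} buffers $x\in\N^{|\maC|}$, $y\in\N^{|\maS|}$, and that the incoming couple $(c,s)$ may be matched with itself. Since $\|(x,y)-(x',y')\|=\|x-x'\|_1+\|y-y'\|_1$ splits over customer and server coordinates, the first step is to rewrite the five cases of $\ccc_{\textsc{ml}}(c,s,\sigma,\gamma)$ in \eqref{eq:defccc} in the uniform form: the customer buffer $x$ loses the unit $\gre_{q_{\textsc{ml}}(x,s,\gamma)}$ exactly when $\maP(x,s)\ne\emptyset$, and gains the unit $\gre_c$ exactly when the incoming customer $c$ finds no compatible server in $y$ \emph{and} is not matched with $s$ (i.e.\ not both ``$\maP(y,c)=\emptyset$'' and ``$\maP(x,s)=\emptyset$, $(c,s)\in E$''); symmetrically for $y$. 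Hence each image differs from its pre-image by a vector carrying at most one $+1$ (a ``joining'' coordinate) and at most one $-1$ (a ``matched'' coordinate) on each of the two sides, and the whole statement reduces to controlling, coordinate by coordinate, the effect of these at most four $\pm1$'s on $\|x-x'\|_1+\|y-y'\|_1$.

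The algebraic core is a simultaneous-matching property of {\sc ml}: if $\maP(x,s)\ne\emptyset$, $\maP(x',s)\ne\emptyset$ and the two chosen classes $i:=q_{\textsc{ml}}(x,s,\gamma)$, $i':=q_{\textsc{ml}}(x',s,\gamma)$ differ, then $x(i)>x'(i)$ or $x'(i')>x(i')$ (and the mirror statement holds for $p_{\textsc{ml}}$ on the server side). To prove it, I would suppose $x(i)\le x'(i)$ and $x(i')\ge x'(i')$; since $i\in\maM(x,s)$ gives $x(i)\ge x(i')$ and $i'\in\maM(x',s)$ gives $x'(i')\ge x'(i)$, the chain $x'(i')\ge x'(i)\ge x(i)\ge x(i')\ge x'(i')$ collapses, so $i,i'\in\maM(x,s)\cap\maM(x',s)$; but then the common tie-break order $\gamma(s)$ would have to place $i$ before $i'$ (because {\sc ml} selected $i$ out of $\maM(x,s)$) and $i'$ before $i$ (because it selected $i'$ out of $\maM(x',s)$), a contradiction. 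Consequently, writing $d=x-x'$ and using $\|d-\gre_i+\gre_{i'}\|_1-\|d\|_1=(|d_i-1|-|d_i|)+(|d_{i'}+1|-|d_{i'}|)$, which is $\le0$ whenever $d_i\ge1$ or $d_{i'}\le-1$, a pair of simultaneous {\sc ml}-matches never increases the relevant $\ell_1$ distance; the same elementary inequality also handles an asymmetric match (one state matches a class $i\in\maC(s)$ while the other has $\maP(\cdot,s)=\emptyset$, hence is null at $i$), which then strictly moves coordinate $i$ towards the other state.

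It remains to run the case analysis according to which of the five regimes of \eqref{eq:defccc} each of $(x,y)$ and $(x',y')$ falls into; the $x\leftrightarrow y$, $c\leftrightarrow s$ symmetry of $\ccc_{\textsc{ml}}$ halves the work, and the sub-cases where the incoming items are matched in the same buffers in both states are closed by the previous paragraph. The main obstacle, and where the bipartite setting genuinely departs from \cite{MBM17}, is the ``mixed'' situation: e.g.\ the incoming customer $c$ joins the customer buffer in $(x,y)$ while in $(x',y')$ it is matched inside $y'$, or absorbed by $s$ via case~2. Then the $x$-coordinate $c$ may move away by one unit, but this is always offset by a compensating $-1$ forced by one of the emptiness conditions holding in $(x,y)$ --- cross-side (since $\maP(y,c)=\emptyset$, the server class of $\maS(c)$ matched in $(x',y')$ is empty in $y$ but positive in $y'$) or same-side (when $c$ joins $x$ and $(c,s)\in E$, ``$\maP(x,s)=\emptyset$'' is impossible, so $s$ is matched inside $x$, freeing a class of $\maC(s)$ that is empty in $x'$). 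Making this bookkeeping airtight over the finitely many regime pairs --- and in particular checking the self-match case~2, which has no counterpart in the general-matching model of \cite{MBM17} --- will yield \eqref{eq:defnonexp1} for {\sc ml}, hence Proposition~\ref{prop:nonexp2}.
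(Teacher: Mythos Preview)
Your approach is essentially the same as the paper's: both reduce to the full case analysis already carried out for random policies (the paper's Appendix~A, cases (1a)--(1o) and (2a)--(2j)), and the only new ingredient specific to {\sc ml} is exactly the ``collapse'' argument you isolate --- if both states match $s$ in different available classes $i\ne i'$, then $x(i)>x'(i)$ or $x'(i')>x(i')$, else the chain $x(i)\ge x(i')\ge x'(i')\ge x'(i)\ge x(i)$ forces a tie broken identically by $\gamma(s)$. Your packaging via $\|d-\gre_i+\gre_{i'}\|_1-\|d\|_1\le 0$ is a clean way to phrase the paper's four-subcase computation of the residual $R$, but the content is identical.
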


Likewise section 4.2.2. in \cite{MBM17}, 
\begin{proposition} 
\label{prop:nonexp}
Any non-expansive matching policy is sub-additive. 
\end{proposition}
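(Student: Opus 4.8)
The plan is to promote the one-step non-expansiveness of the class-detail map $\ccc_\phi$ to a global $1$-Lipschitz property of its iterates, and then to read off sub-additivity from the triangle inequality, after first reducing the two inequalities of Definition~\ref{def:subadd} to a single one about the \emph{sum} of the buffer-detail lengths.

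First I would set up the bookkeeping. Since $\phi$ is class-admissible it is in particular admissible (Remark~\ref{rem:equiv}). For a quadruple $\theta=(c,s,\sigma,\gamma)\in F\times(\mathbb S\times\mathbb C)$ write $\psi_\theta\colon\mathbb E\to\mathbb E$, $\psi_\theta(x,y)=(x,y)\ccc_\phi\theta$, and for a finite sequence $\Theta=(\theta_1,\dots,\theta_N)$ of quadruples write $\Psi_\Theta=\psi_{\theta_N}\circ\cdots\circ\psi_{\theta_1}$. By Remark~\ref{rem:equiv}, for an input whose customer and server words have equal length, iterating $\odot_\phi$ from $\emptystate$ produces the buffer detail $\bigl(C_\phi(\mc,\ms,\msg,\mga),S_\phi(\mc,\ms,\msg,\mga)\bigr)$, and iterating $\ccc_\phi$ from its commutative image (still written $\emptystate$, by abuse) produces $\bigl([C_\phi(\mc,\ms,\msg,\mga)],[S_\phi(\mc,\ms,\msg,\mga)]\bigr)$; hence, with $\|\cdot\|$ the $\ell_1$-norm on $\R^{|\maC|}\times\R^{|S|}$, one has $\|\Psi_\Theta(\emptystate)\|=|C_\phi(\mc,\ms,\msg,\mga)|+|S_\phi(\mc,\ms,\msg,\mga)|$ as soon as $\Theta$ encodes the $(c_k,s_k,\sigma_k,\gamma_k)$ with $|\mc|=|\ms|$. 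Non-expansiveness says exactly that each $\psi_\theta$ is $1$-Lipschitz for $\|\cdot\|$, so every composition $\Psi_\Theta$ is $1$-Lipschitz as well.

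Next I would record a conservation law: inspecting the five cases of~\eqref{eq:defccc}, the map $\ccc_\phi$ leaves $\|x\|-\|y\|$ invariant, since in the two "addition'' cases the state is either unchanged or incremented by $(\gre_c,\gre_s)$, while in the three "matching'' cases the indices $p_\phi(y,c,\sigma)$ and $q_\phi(x,s,\gamma)$ are classes of items already present in the respective buffers, so subtracting the corresponding basis vectors keeps the state in $\N^{|\maC|}\times\N^{|S|}$ and changes $\|x\|$ and $\|y\|$ by the same amount. Therefore, starting from $\emptystate$, every equal-length input yields a terminal class detail with $\|x\|=\|y\|$, i.e. $|C_\phi(\mc,\ms,\msg,\mga)|=|S_\phi(\mc,\ms,\msg,\mga)|$ whenever $|\mc|=|\ms|$, which is precisely what converts a bound on the sum into a bound on each term. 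With this in hand, for $\mc',\mc'',\ms',\ms'',\msg',\msg'',\mga',\mga''$ as in Definition~\ref{def:subadd} I would let $\Theta'$, $\Theta''$ be the associated quadruple sequences and $\Theta=\Theta'\Theta''$ their concatenation, so that $\Psi_\Theta=\Psi_{\Theta''}\circ\Psi_{\Theta'}$; putting $\zeta'=\Psi_{\Theta'}(\emptystate)$, $\zeta''=\Psi_{\Theta''}(\emptystate)$ and $\zeta=\Psi_{\Theta''}(\zeta')$, the $1$-Lipschitz property of $\Psi_{\Theta''}$ gives $\|\zeta-\zeta''\|=\|\Psi_{\Theta''}(\zeta')-\Psi_{\Theta''}(\emptystate)\|\le\|\zeta'\|$, whence $\|\zeta\|\le\|\zeta'\|+\|\zeta''\|$; by the bookkeeping above this reads $|C_\phi(\mc'\mc'',\dots)|+|S_\phi(\mc'\mc'',\dots)|\le\bigl(|C_\phi(\mc',\dots)|+|S_\phi(\mc',\dots)|\bigr)+\bigl(|C_\phi(\mc'',\dots)|+|S_\phi(\mc'',\dots)|\bigr)$, and dividing by $2$ using $|C_\phi|=|S_\phi|$ on each of the three equal-length inputs yields both inequalities of Definition~\ref{def:subadd}.

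I do not expect a genuine obstacle: the only points that need (routine) care are the case-by-case verification of the $\|x\|-\|y\|$ conservation law in~\eqref{eq:defccc}, and the identification of the buffer-detail lengths with the $\ell_1$-norm of the iterated class detail through Remark~\ref{rem:equiv}. The one mismatch to flag is that Definition~\ref{def:subadd} quantifies over arbitrary preference lists whereas non-expansiveness is stated for drawable ones; this is harmless for Proposition~\ref{prop:sub}, since Propositions~\ref{prop:nonexp1} and~\ref{prop:nonexp2} furnish the $1$-Lipschitz bound for {\sc rand} and {\sc ml} under every admissible preference list.
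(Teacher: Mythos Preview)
Your proof is correct and follows essentially the same route as the paper: iterate the one-step non-expansiveness to obtain a global $1$-Lipschitz bound, apply the triangle inequality to bound $|C_\phi|+|S_\phi|$ for the concatenated input, and then use the conservation of $\|x\|-\|y\|$ under $\ccc_\phi$ (equivalently, $|S_\phi|-|C_\phi|=|\ms|-|\mc|$) to split this into the two separate inequalities. The only cosmetic difference is that the paper phrases the last step for possibly unequal-length words via the quantities $q'=|\ms'|-|\mc'|$ and $q''=|\ms''|-|\mc''|$, which specialize to your $|C_\phi|=|S_\phi|$ observation when $q'=q''=0$ as in Definition~\ref{def:subadd}.
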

\begin{proof} 
Fix a non-expansive matching policy $\phi$. Keeping the 
notations of Definition \ref{def:subadd}, let us define the two
sequences $\{(x_n,y_n)\}$ and $\{(x'_n,y'_n)\}$ to be the class
details of the system at arrival times, starting respectively from
an empty system and from a system of buffer detail $(\mw',\mz')$, and
having a common input $\left(c'',s'',\sigma'',\gamma''\right)$. 
In other words, we set 
\[\left\{\begin{array}{ll}
(x_0,y_0) &= \left(\mathbf 0,\mathbf 0\right);\\
(x'_0,y'_0) &= \left[\left(C_\phi(\mc',\ms',\msg',\mga')\,,\,S_\phi(\mc',\ms',\msg',\mga')\right)\right]\\
\end{array}\right.\]
and by induction,
\[\left\{\begin{array}{ll}
\left(x_{n+1},y_{n+1}\right) &= \left(x_{n},y_{n}\right)\ccc_{\phi} \left(c''_{n+1},s''_{n+1},\sigma''_{n+1},\gamma''_{n+1}\right),\,n\in\left\{0,\dots,|\mc''|-1\right\};\\
\left(x'_{n+1},y'_{n+1}\right)&=\left(x'_{n},y'_{n}\right)\ccc_{\phi}\left(c''_{n+1},s''_{n+1},\sigma''_{n+1},\gamma''_{n+1}\right),\,n\in\left\{0,\dots,|\mc''|-1\right\}.
\end{array}\right.\]
Applying (\ref{eq:defnonexp1}) at all $n$, we obtain by induction that for all $n
\in \left\{0,\dots,|\mc''|\right\}$,
\begin{equation}
 \|(x'_n,y'_n) -(x_n,y_n)\| \le \|(x'_0,y'_0)-(x_0,y_0)\|= |w'|+|z'|.\label{eq:nonexprec}
\end{equation}
Now observe that by construction, $\left(x_{|\mc''|},y_{|\mc''|}\right)=\left(\left[C_\phi(\mc',\ms',\sigma',\gamma')\right]\,,\,\left[S_\phi(\mc',\ms',\sigma',\gamma')\right]\right)$ which, 
together with (\ref{eq:nonexprec}), implies that
\begin{align}
|\mw|+|\mz| &= \left\|\left(x'_{|\mc''|},y'_{|\mc''|}\right)\right\|\nonumber\\
        &\le \left\|\left(x'_{|\mc''|},y'_{|\mc''|}\right) -\left(x_{|\mc''|},y_{|\mc''|}\right)\right\|+\left\|\left(x_{|\mc''|},y_{|\mc''|}\right)\right\|\nonumber\\
        &\le \left|C_\phi(\mc',\ms',\sigma',\gamma')\right| + \left|S_\phi(\mc',\ms',\sigma',\gamma')\right| + \left|C_\phi(\mc'',\ms'',\sigma'',\gamma'')\right| 
         + \left|S_\phi(\mc'',\ms'',\sigma'',\gamma'')\right|.\label{eq:Auster1}
        \end{align}
Remember that the couples of words $\mc'$ and $\ms'$, and $\mc''$ and
$\ms''$, are not necessarily of the same size, and let us
denote
\[q'= |\ms'|-|\mc'|\quad\mbox{ and }\quad q''= |\ms''|-|\mc''|.\]
By the very definition of a matching of $\mc'$ and $\ms'$ we have that
\[\left|S_\phi(\mc',\ms',\msg',\mga')\right| - \left|C_\phi(\mc',\ms',\msg',\mga')\right| = |\ms'|-|\mc'| = q'\] and likewise, that
$\left|S_\phi(\mc'',\ms'',\msg'',\mga'')\right|-\left|C_\phi(\mc'',\ms'',\msg'',\mga'')\right|=q''$ and $\left|S_\phi(\mc,\ms,\msg,\mga)\right|-\left|C_\phi(\mc,\ms,\msg,\mga)\right|=|v|-|u|=q'+q''$. All in all, we
obtain with (\ref{eq:Auster1}) that
\[\left\{\begin{array}{ll}
2\left|C_\phi(\mc,\ms,\msg,\mga)\right|+q'+q'' &\le 2\left|C_\phi(\mc',\ms',\msg',\mga')\right|+q' + 2\left|C_\phi(\mc'',\ms'',\msg'',\mga'')\right|+q'';\\
2\left|S_\phi(\mc,\ms,\msg,\mga)\right|-(q'+q'') &\le 2\left|S_\phi(\mc',\ms',\msg',\mga')\right|-q' + 2\left|S_\phi(\mc'',\ms'',\msg'',\mga'')\right|-q''. 
\end{array}\right.\]
We conclude by adding up the two above inequalities. 
\end{proof}

\subsection{Proof of Proposition \ref{prop:sub} for {\sc lcfs}}
\label{subsec:lcfs}
Likewise {\sc fcfs}, the policy {\sc lcfs} does not satisfy a non-expansiveness property similar to 
(\ref{eq:defnonexp1}). We resort to a direct argument, similar to that of Section 4.2.3 in \cite{MBM17}.  
We drop for short the dependence on $(\sigma,\gamma)$ in the notations $\Mlcfs(.)$, $\Qlcfs(.)$ $\Clcfs(.)$ and $\Slcfs$, as the {\sc lcfs} matchings do not depend on any list of preferences. 

{\bf Step I:} At first, set $|\mc'|=1$ and $|\ms'|=0$.  
We need to show that $|\Clcfs(\mc,\ms)| \le |\Clcfs(\mc'',\ms'')|+1$ and $|\Slcfs(\mc,\ms)| \le |\Slcfs(\mc'',\ms'')|$. There are three different cases:  
\begin{itemize}
\item[(a)] If $c'_1$ is unmatched in $\Mlcfs(\mc,\ms)$, then $c'_1$ is incompatible with $\ms''_1$, otherwise the two items would have been matched. 
In turn, if follows from the definition of {\sc lcfs} that the presence of $c'_1$ does not influence the choice of match of any 
server $s''_j$ that is matched in $\Mlcfs(\mc'',\ms'')$, even though $(c'_1,s''_j) \in E$. So $|\Clcfs(\mc,\ms)|=|\Clcfs(\mc'',\ms'')|+1$ and $|\Slcfs(\mc,\ms)|= |\Slcfs(\mc'',\ms'')|$. 
\item[(b)] Whenever $c'_1$ is matched in $\Mlcfs(\mc,\ms)$ with a server $s''_{j_1}$ that was unmatched in $\Mlcfs(\mc'',\ms''),$ any 
matched customer $c''_i$ in $\Mlcfs(\mc'',\ms'')$ that is compatible with $s''_{j_1}$ has found in $\ms''$ a more recent compatible server $s''_j$. 
The matching of $c''_i$ with $s''_j$ still occurs in $\Mlcfs(\mc,\ms)$. Thus, as above the matching induced in $\Mlcfs(\mc,\ms)$ by the nodes of $\mc''$ is not affected by the 
match $(c_1,c''_{j_1})$, so $|\Clcfs(\mc,\ms)|=|\Clcfs(\mc'',\ms'')|$ and $|\Slcfs(\mc,\ms)|=|\Slcfs(\mc'',\ms'')|-1$.   
\item[(c)] Suppose now that $c'_1$ is matched with a server $s''_{j_1}$ that was matched in $\Mlcfs(\mc'',\ms'')$ to some customer $c''_{i_1}$. 
This occurs if and only if $c'_1 \v s''_{j_1}$ and $i_1 \ge j_1$, otherwise in {\sc lcfs}, $s''_{j_1}$ would prioritize $c''_{i_1}$ over $c'_1$ upon arrival.  
We thus need to search a new match for $c''_{i_1}$. Either there is none and we stop, or we find a match, say $s''_{j_2}$.
The new pair $(c''_{i_1}, s''_{j_2})$ potentially broke an old pair $(u''_{i_2}, v''_{j_2})$. We continue until either $u''_{i_k}$ cannot find a new match or $v''_{j_k}$ was not previously matched.
In the first case, we end up with $|\Clcfs(\mc'',\ms'')|+1$ unmatched customers and $|\Slcfs(\mc'',\ms'')|$ unmatched servers; and in the second case, with $|\Clcfs(\mc'',\ms'')|$ unmatched customers and 
$|\Slcfs(\mc'',\ms'')|-1$ unmatched servers.
\end{itemize} 
The case with $|\mc'|=0$ and $|\ms'|=1$ is symmetrical: then $\left|\Clcfs(\mc,\ms)\right| \le \left|\Clcfs(\mc'',\ms'')\right|$ and 
$\left|\Slcfs(\mc,\ms)\right| \le \left|\Slcfs(\mc'',\ms'')\right|+1$.

\bigskip

{\bf Step II:} Consider now arbitrary finite words $\mc'$ and $\ms'$. Note that if $(c'_i, s'_j) \in \Mlcfs(\mc',\ms')$ then
$(c'_i,s'_j) \in \Mlcfs(\mc,\ms)$, as is the case for any admissible policy. Thus we have 
\begin{equation}
\label{eq:tahri0}
\left\{\begin{array}{ll}
\Clcfs(\mc,\ms) &=\Clcfs\left(\Clcfs(\mc',\ms')\mc''\,,\,\Slcfs(\mc',\ms')\ms''\right);\\
\Slcfs(\mc,\ms) &=\Slcfs\left(\Clcfs(\mc',\ms')\mc''\,,\,\Slcfs(\mc',\ms')\ms''\right).
\end{array}\right.
\end{equation}
We will consider one by one the customers in $\Clcfs(\mc',\ms')$ and servers in $\Slcfs(\mc',\ms')$, starting from the right to the left. 
The order between customers and servers does not matter; we chose to consider customers first and then servers. 
Let for all $1 \leq i \leq \left|\Clcfs(\mc',\ms')\right|$ (resp. $1 \leq i \leq \left|\Slcfs(\mc',\ms')\right|$), $C^i$ (resp., $S^i$) 
be the suffix of length $i$ of $\Clcfs(\mc',\ms')$ (resp., $\Slcfs(\mc',\ms')$). 
By Step I, for $1 \leq i \leq \left|\Clcfs(\mc',\ms')\right|-1$ we have that 
\begin{equation}
\label{eq:tahri1}
\left\{\begin{array}{ll}
\left|\Clcfs\left(C^{i+1}\mc'', \ms''\right)\right| &\leq 1+\left|\Clcfs\left(C^{i}\mc'', \ms''\right)\right| \\
\left|\Slcfs\left(C^{i+1}\mc'', \ms''\right)\right| &\leq \left|\Slcfs\left(C^{i}\mc'', \ms''\right)\right|. 
\end{array}\right.
\end{equation}
Similarly, by considering then the servers from right to left we obtain that 
for $1 \leq i \leq \left|\Slcfs(\mc',\ms')\right|-1$,  
\begin{equation}
\label{eq:tahri2}
\left\{\begin{array}{ll}
\left|\Clcfs\left(\Clcfs(\mc',\ms')\mc'', S^{i+1}\ms''\right)\right| &\leq \left|\Clcfs\left(\Clcfs(\mc',\ms')\mc'',S^{i}\ms''\right)\right|\\ 
\left|\Slcfs\left(\Clcfs(\mc',\ms')\mc'', S^{i+1}\ms''\right)\right| &\leq 1+\left|\Slcfs\left(\Clcfs(\mc',\ms')\mc'',S^{i}\ms''\right)\right|. 
\end{array}\right.
\end{equation}
Applying (\ref{eq:tahri1}) by induction, and then (\ref{eq:tahri2}) by induction we obtain that 
\begin{equation*}
\left\{\begin{array}{ll}
\left|\Clcfs\left(\Clcfs(\mc',\ms')\mc'', \Slcfs(\mc',\ms')\ms''\right)\right| &\leq \left|\Clcfs(\mc',\ms')\right|+\left|\Clcfs\left(\mc'', \ms''\right)\right|\\
\left|\Slcfs\left(\Clcfs(\mc',\ms')\mc'', \Slcfs(\mc',\ms')\ms''\right)\right| &\leq \left|\Slcfs(\mc',\ms')\right| + \left|\Slcfs\left(\mc'', \ms''\right)\right|,
\end{array}\right.
\end{equation*}
which, together with (\ref{eq:tahri0}), concludes the proof for {\sc lcfs}. 


\section{Bi-separable graphs}
\label{sec:bisep}
We introduce a class of bipartite matching graphs which play an important role in the stability study that follows: the bi-separable graphs, 
which can be seen as the analog, for bipartite graphs, of the separable graphs introduced in \cite{MaiMoy16}. 

\begin{definition}
Let $\maG=(\maC \cup \maS,E)$ be a matching graph. An {\em independent set} of $\maG$ is a non-empty bipartite set $A \cup B$, where $A \subset \maC$ and $B \subset\maS$, that is such that 
$A \times B \subset (\maC\times\maS) \setminus E.$
Denote for any independent set $I:=A\cup B$ of $\maG$, 
\begin{equation}
\label{eq:defCSnot}
\maC_\circ (I) =\maC \setminus \left(A \cup \maC(B)\right)\quad\quad\mbox{ and }\quad\quad
\maS_\circ (I) =\maS \setminus \left(B \cup \maS(A)\right).
\end{equation}
The independent set $I$ is then said {\em maximal} whenever $\maC_\circ\left(I\right)=\emptyset\mbox{ and }\maS_\circ\left(I\right)=\emptyset.$
\end{definition}

\begin{definition}
A bipartite matching graph $\maG=\left(\maC \cup \maS,E\right)$ is said 
{\em bi-separable} of order $p$, $p\ge 2$, whenever there exists a partition of $\maC\cup\maS$ into $p$ independent sets
$I_1=A_1 \cup B_1,..,I_p=A_p \cup B_p$ of $\maG$ such that any $I_i$ that is not maximal (if any) is such that $A_i=\emptyset$ or $B_i=\emptyset$. In other words, the complement bipartite graph $\overline{\maG}$ of $\maG$ has $p$ disjoint bipartite-connected components $I_1,..,I_p$.
\end{definition}

\noindent Clearly,
\begin{proposition}
If the bipartite graph $\maG=\left(\maC \cup \maS,E\right)$ is bi-separable, then any bipartite graph
$\hat{\maG}=\left(\maC \cup \maS, \hat E\right)$ for $E \subset \hat E$, 
is bi-separable.
\end{proposition}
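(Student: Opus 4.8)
The plan is to re-use a bi-separable partition of $\maG$, repairing only the blocks that the new edges destroy. Put $\overline E:=(\maC\times\maS)\setminus E$ and $\overline{\hat E}:=(\maC\times\maS)\setminus\hat E$, so that the hypothesis $E\subseteq\hat E$ is the same as $\overline{\hat E}\subseteq\overline E$, i.e. $\overline{\hat\maG}$ is obtained from $\overline{\maG}$ by deleting edges. For $A\subseteq\maC$ and $B\subseteq\maS$, write $\maC^{\hat E}(B)$ and $\maS^{\hat E}(A)$ for the neighbourhoods taken in $\hat\maG$; since $E\subseteq\hat E$ one has $\maC(B)\subseteq\maC^{\hat E}(B)$ and $\maS(A)\subseteq\maS^{\hat E}(A)$. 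This monotonicity of neighbourhoods is the only structural input I will use.

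Fix a partition $\maC\cup\maS=I_1\sqcup\cdots\sqcup I_p$ witnessing that $\maG$ is bi-separable, with $I_i=A_i\cup B_i$ and every non-maximal $I_i$ one-sided. I would build a partition for $\hat\maG$ as follows: keep the block $I_i$ whenever $A_i\times B_i\subseteq\overline{\hat E}$ (so that $I_i$ is still an independent set of $\hat\maG$), and otherwise replace $I_i$ by the $|I_i|$ singletons it contains. Call the result $\{J_1,\dots,J_q\}$. Then $q\ge p\ge 2$, and each $J_k$ is an independent set of $\hat\maG$: a singleton has one of its two sides empty, and a kept block satisfies $A_i\times B_i\subseteq\overline{\hat E}$ by construction.

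It then remains to check that every non-maximal $J_k$ is one-sided, equivalently that every two-sided $J_k$ is a maximal independent set of $\hat\maG$. A two-sided $J_k$ is necessarily a kept block $I_i$ with $A_i\neq\emptyset\neq B_i$; by the contrapositive of ``non-maximal $\Rightarrow$ one-sided'' applied to $\{I_i\}$, such an $I_i$ is a maximal independent set of $\maG$, i.e. $\maC_\circ(I_i)=\emptyset$ and $\maS_\circ(I_i)=\emptyset$ in the notation of (\ref{eq:defCSnot}), that is $\maC=A_i\cup\maC(B_i)$ and $\maS=B_i\cup\maS(A_i)$. Using $\maC(B_i)\subseteq\maC^{\hat E}(B_i)$ and $\maS(A_i)\subseteq\maS^{\hat E}(A_i)$ gives $\maC=A_i\cup\maC^{\hat E}(B_i)$ and $\maS=B_i\cup\maS^{\hat E}(A_i)$, so the quantities $\maC_\circ(I_i)$, $\maS_\circ(I_i)$ computed relative to $\hat\maG$ both vanish, i.e. $J_k=I_i$ is maximal in $\hat\maG$. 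Hence $\hat\maG$ is bi-separable of order $q\ge 2$.

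The argument is mostly bookkeeping; the one point deserving care is the last one, namely that exploding into singletons the blocks that cease to be independent cannot create a two-sided block which fails maximality, and this is exactly where $E\subseteq\hat E$ enters, through the neighbourhood inclusions $\maC(B_i)\subseteq\maC^{\hat E}(B_i)$, $\maS(A_i)\subseteq\maS^{\hat E}(A_i)$. (More crudely, since $\maC$ and $\maS$ are non-empty, the partition into singletons already witnesses bi-separability of every bipartite matching graph on $\maC\cup\maS$, which yields the statement at once; the construction above is the one that does not blow up the order.)
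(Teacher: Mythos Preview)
Your proof is correct. The paper offers no argument beyond the word ``Clearly,'' so your write-up is in fact a complete justification where the paper gives none. Both your main construction (keep intact blocks, explode the rest into singletons) and your parenthetical remark are valid; the latter is really the one-line reason the paper presumably has in mind: under the formal definition given, the all-singleton partition already witnesses bi-separability of \emph{any} bipartite graph on $\maC\cup\maS$ (each singleton is a one-sided, hence vacuously admissible, independent set, and $|\maC|+|\maS|\ge 2$), so the hypothesis $E\subset\hat E$ is not even needed for the bare statement. Your longer argument has the merit of preserving as much of the original partition as possible and showing explicitly, via the neighbourhood monotonicity $\maC(B_i)\subseteq\maC^{\hat E}(B_i)$, $\maS(A_i)\subseteq\maS^{\hat E}(A_i)$, that two-sided blocks remain maximal when edges are added---which is the substantive point if one cares about controlling the order of bi-separability rather than just its existence.
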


\noindent Also,
\begin{lemma}
\label{lemma:separable}
Let $\maG=\left(\maC \cup \maS,E\right)$ is bi-separable of order $p$ and maximal independent sets $I_1=(A_1,B_1),...,I_p=(A_p,B_p)$. Then, for any $i \in \llbracket 1,p \rrbracket$ and 
any $c,c' \in A_i$, $s,s' \in\maS$, we have \[\maS(c)=\maS(c')=S\setminus B_i\quad\quad\mbox{ and }\quad\quad \maC(s)=\maC(s')=\maC\setminus A_i.\]
\end{lemma}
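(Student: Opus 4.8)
The plan is to unravel the two definitions — maximality of an independent set and bi-separability — and observe that together they force each non-edge block $I_i = A_i \cup B_i$ of the complement graph to be a \emph{complete} bipartite block in $\overline{\maG}$, so that $A_i \times B_i$ is exactly the set of non-edges incident to $A_i$ (and to $B_i$). Fix $i \in \llbracket 1,p \rrbracket$ and $c \in A_i$. First I would show $\maS(c) = \maS \setminus B_i$. The inclusion $\maS(c) \subset \maS \setminus B_i$ is immediate: since $I_i$ is an independent set, $A_i \times B_i \subset (\maC \times \maS) \setminus E$, so no server in $B_i$ is compatible with $c$. For the reverse inclusion, take $s \in \maS \setminus B_i$; since $I_1,\dots,I_p$ partition $\maC \cup \maS$, the server $s$ lies in some block $B_j$ with $j \ne i$. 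If $(c,s) \notin E$, then $c \cup \{s\}$ (more precisely $\{c\} \cup \{s\}$, viewed inside $\overline{\maG}$) would be a non-edge straddling the two distinct bipartite-connected components $I_i$ and $I_j$ of $\overline{\maG}$, contradicting the fact that these components are disjoint and bipartite-connected (equivalently, it would contradict maximality of $I_i$: such an $s$ would lie in $\maS_\circ(I_i)$, which is empty). Hence $(c,s) \in E$, i.e.\ $s \in \maS(c)$, proving $\maS \setminus B_i \subset \maS(c)$.

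Since the description $\maS(c) = \maS \setminus B_i$ does not depend on which $c \in A_i$ we chose, we get $\maS(c) = \maS(c') = \maS \setminus B_i$ for all $c,c' \in A_i$. The statement for $\maC(s)$, $s \in \maS$, is proved by the symmetric argument with the roles of $\maC$ and $\maS$ exchanged: for $s \in B_i$ one shows $\maC(s) = \maC \setminus A_i$ using $A_i \times B_i \subset (\maC \times \maS)\setminus E$ for one inclusion and the maximality/disjointness of the complement components (i.e.\ $\maC_\circ(I_i) = \emptyset$) for the other; and this common value depends only on $i$, not on the particular $s \in B_i$.

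The only genuine subtlety — and the step I would be most careful about — is the translation between the two phrasings of bi-separability in the Definition: ``$I_i$ maximal, with $A_i = \emptyset$ or $B_i = \emptyset$ allowed only when it is not maximal'' versus ``$\overline{\maG}$ has $p$ disjoint bipartite-connected components $I_1,\dots,I_p$''. One must check that a \emph{maximal} independent set $I_i = A_i \cup B_i$ with both $A_i \ne \emptyset$ and $B_i \ne \emptyset$ is automatically a \emph{complete} bipartite block of $\overline{\maG}$ with \emph{no} non-edges leaving it; the emptiness of $\maC_\circ(I_i)$ and $\maS_\circ(I_i)$ is exactly what delivers this (any non-edge from $A_i$ to the outside would produce an element of $\maS_\circ(I_i)$, and symmetrically), and disjointness of the blocks of the partition guarantees that ``the outside of $I_i$'' is precisely $\bigcup_{j \ne i} I_j$. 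Once this is pinned down, both inclusions in each of the two equalities are one line each, and the independence of the value on the choice of representative $c$ or $s$ is immediate from the fact that the right-hand sides $\maS \setminus B_i$ and $\maC \setminus A_i$ mention only the block index $i$.
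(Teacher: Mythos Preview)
Your main argument via the bipartite-connected components of $\overline{\maG}$ is correct and is essentially the paper's approach: the paper supposes $(c,\ell)\in E$ but $(c',\ell)\notin E$ for $c,c'\in A_i$, observes that $(c',\ell)\in\bar E$ forces $(c',\ell)\in A_j\times B_j$ for some $j$, hence $j=i$ (since $c'\in A_i$), contradicting $\ell\notin B_i$; you run the same component argument directly to identify $\maS(c)$ with $\maS\setminus B_i$.

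One small correction: your parenthetical alternative ``such an $s$ would lie in $\maS_\circ(I_i)$, which is empty'' does not follow from the hypothesis $(c,s)\notin E$ alone, because $\maS_\circ(I_i)=\maS\setminus(B_i\cup\maS(A_i))$ and membership in $\maS_\circ(I_i)$ requires $(a,s)\notin E$ for \emph{every} $a\in A_i$, not just for the single $c$ under consideration. So the maximality of $I_i$ by itself is not the mechanism here; it is the fact that every non-edge of $\maG$ lies inside a single block $I_j$ (the component characterisation of bi-separability) that delivers the reverse inclusion. Your primary argument already uses this correctly, so the proof stands once the parenthetical is dropped or amended.
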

\begin{proof}
Let $c,c' \in A_i$. Suppose that there exists $\ell \in\maS$ such that $(c,\ell) \in E$ but $(c',\ell) \not\in E$. 
Then, $\ell \in \maS(A_i),$ and thus $\ell \not\in B_i$. But $(c',\ell) \in \bar E$, so there exists $j \in \llbracket 1,p \rrbracket$ such that $(c',\ell) \in A_j\times B_j$.
This implies in turn that $j=i$, an absurdity since $\ell \not\in B_i.$
\end{proof}

Examples of bi-separable graphs are represented in Figure \ref{Fig:sep}. 

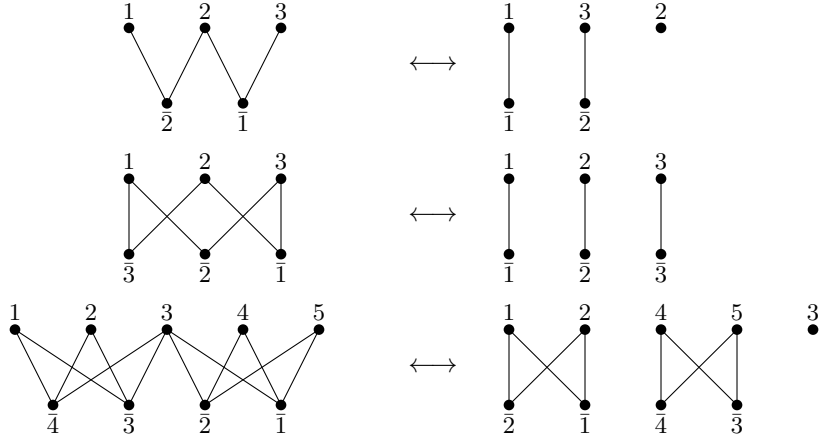
\begin{figure}[h!]
\begin{center}
\begin{tikzpicture}
\draw[-] (1,3) -- (1.5,2);
\draw[-] (2,3) -- (1.5,2);
\draw[-] (2,3) -- (2.5,2);
\draw[-] (3,3) -- (2.5,2);
\fill (1,3) circle (2pt) node[above] {\small{1}} ;
\fill (1.5,2) circle (2pt) node[below] {\small{$\bar 2$}} ;
\fill (2,3) circle (2pt) node[above] {\small{2}} ;
\fill (3,3) circle (2pt) node[above] {\small{3}} ;
\fill (2.5,2) circle (2pt) node[below] {\small{$\bar 1$}} ;
\fill (5,2.5) node[] {$\longleftrightarrow$} ;
\draw[-] (6,3) -- (6,2);
\draw[-] (7,3) -- (7,2);
\fill (6,3) circle (2pt) node[above] {\small{1}} ;
\fill (6,2) circle (2pt) node[below] {\small{$\bar 1$}} ;
\fill (7,3) circle (2pt) node[above] {\small{3}} ;
\fill (7,2) circle (2pt) node[below] {\small{$\bar 2$}} ;
\fill (8,3) circle (2pt) node[above] {\small{2}} ;
\draw[-] (1,1) -- (1,0);
\draw[-] (1,1) -- (2,0);
\draw[-] (2,1) -- (1,0);
\draw[-] (2,1) -- (3,0);
\draw[-] (3,1) -- (2,0);
\draw[-] (3,1) -- (3,0);
\fill (1,1) circle (2pt) node[above] {\small{1}} ;
\fill (1,0) circle (2pt) node[below] {\small{$\bar 3$}} ;
\fill (2,1) circle (2pt) node[above] {\small{2}} ;
\fill (2,0) circle (2pt) node[below] {\small{$\bar 2$}} ;
\fill (3,1) circle (2pt) node[above] {\small{3}} ;
\fill (3,0) circle (2pt) node[below] {\small{$\bar 1$}} ;
\fill (5,0.5) node[] {$\longleftrightarrow$} ;
\draw[-] (6,1) -- (6,0);
\draw[-] (7,1) -- (7,0);
\draw[-] (8,1) -- (8,0);
\fill (6,1) circle (2pt) node[above] {\small{1}} ;
\fill (6,0) circle (2pt) node[below] {\small{$\bar 1$}} ;
\fill (7,1) circle (2pt) node[above] {\small{2}} ;
\fill (7,0) circle (2pt) node[below] {\small{$\bar 2$}} ;
\fill (8,1) circle (2pt) node[above] {\small{3}} ;
\fill (8,0) circle (2pt) node[below] {\small{$\bar 3$}} ;
\draw[-] (-0.5,-1) -- (0,-2);
\draw[-] (-0.5,-1) -- (1,-2);
\draw[-] (0.5,-1) -- (0,-2);
\draw[-] (0.5,-1) -- (1,-2);
\draw[-] (1.5,-1) -- (1,-2);
\draw[-] (1.5,-1) -- (2,-2);
\draw[-] (1.5,-1) -- (0,-2);
\draw[-] (1.5,-1) -- (3,-2);
\draw[-] (2.5,-1) -- (2,-2);
\draw[-] (2.5,-1) -- (3,-2);
\draw[-] (3.5,-1) -- (2,-2);
\draw[-] (3.5,-1) -- (3,-2);
\fill (-0.5,-1) circle (2pt) node[above] {\small{1}} ;
\fill (0.5,-1) circle (2pt) node[above] {\small{2}} ;
\fill (1.5,-1) circle (2pt) node[above] {\small{3}} ;
\fill (2.5,-1) circle (2pt) node[above] {\small{4}} ;
\fill (3.5,-1) circle (2pt) node[above] {\small{5}} ;
\fill (0,-2) circle (2pt) node[below] {\small{$\bar 4$}} ;
\fill (1,-2) circle (2pt) node[below] {\small{$\bar 3$}} ;
\fill (2,-2) circle (2pt) node[below] {\small{$\bar 2$}} ;
\fill (3,-2) circle (2pt) node[below] {\small{$\bar 1$}} ;
\fill (5,-1.5) node[]{$\longleftrightarrow$};
\draw[-] (6,-1) -- (6,-2);
\draw[-] (7,-1) -- (7,-2);
\draw[-] (6,-1) -- (7,-2);
\draw[-] (7,-1) -- (6,-2);
\draw[-] (8,-1) -- (8,-2);
\draw[-] (9,-1) -- (9,-2);
\draw[-] (8,-1) -- (9,-2);
\draw[-] (9,-1) -- (8,-2);
\fill (6,-1) circle (2pt) node[above] {\small{1}} ;
\fill (7,-1) circle (2pt) node[above] {\small{2}} ;
\fill (8,-1) circle (2pt) node[above] {\small{4}} ;
\fill (9,-1) circle (2pt) node[above] {\small{5}} ;
\fill (6,-2) circle (2pt) node[below] {\small{$\bar 2$}} ;
\fill (7,-2) circle (2pt) node[below] {\small{$\bar 1$}} ;
\fill (8,-2) circle (2pt) node[below] {\small{$\bar 4$}} ;
\fill (9,-2) circle (2pt) node[below] {\small{$\bar 3$}} ;
\fill (10,-1) circle (2pt) node[above] {\small{3}} ;
\end{tikzpicture}
\caption[smallcaption]{Three bi-separable graphs of order 3, together with their bipartite complement graphs.}
\label{Fig:sep}
\end{center}
\end{figure}

\section{(Strong) erasing couples}
\label{sec:erase}
{\em Erasing couples} and {\em strong erasing couples} generalize to EBM models, the definitions of erasing words and strong erasing words for GM models 
(Definitions 4 and 6 in \cite{MBM17}). They will play an important role in the construction below. 

\subsection{Definitions}
\label{subsec:deferasing}

\begin{definition}
Let $\maB =(\maC,\maS,E,F)$ be a bipartite matching structure, $\phi$ be an admissible matching policy and $(\mw,\mz) \in \CE_0$.   
An admissible input $(\mc,\ms)$ is said to be an {\em erasing couple} of $(\mw,\mz)$ for $(\maB,\phi)$ if for any words $\msg,\msg'\in \mathbb C^*$ and $\mga,\mga'\in \mathbb S^*$ 
such that $|\msg|=|\mga|=|\mw|$ and $|\msg'|=|\mga'|=|\mc|$, we have that 
\begin{equation}
\label{eq:deferase}
Q_\phi\left(\mc,\ms,\msg',\mga'\right)=\emptyset\quad\mbox{ and }\quad 
Q_\phi\left(\mw\mc,\mz\ms,\msg\msg',\mga\mga'\right)=\emptyset.
\end{equation}
\end{definition}
Condition (\ref{eq:deferase}) means that the input $(\mc,\ms)$ is completely matchable, alone and together with $(\mw,\mz)$, for any fixed lists of preferences.

\begin{definition}
Let $\maB =(\maC,\maS,E,F)$ be a bipartite matching structure and $\phi$ be a matching policy. 
An admissible input $(\mc,\ms)$ is said to be a {\em strong erasing couple} for $(\maB,\phi)$ if for any words $\msg \in \mathbb C^*$ and $\mga \in \mathbb S^*$ such that $|\msg|=|\mga|=|\mc|$, 
\begin{align}
\mbox{For any suffixes } \breve{\mc},\breve{\ms},\breve{\msg}\mbox{ and }\breve{\mga} \mbox{ of }\mc,{\ms},{\msg}\mbox{ and }{\mga}\mbox{ having the same length},\quad\quad
Q_\phi\left(\breve{\mc},\breve{\ms},\breve{\msg},\breve{\mga}\right)=\emptyset,\; \label{eq:defstrongcouple1}\\
\mbox{For any }(i,j) \in E^c\mbox{ and any }(\alpha,\beta)\in \mathbb S\times \mathbb C,\,\quad\quad Q_\phi(i\mc,j\ms,\alpha\msg,\beta\mga)=\emptyset.\label{eq:defstrongcouple2} 
\end{align}
\end{definition} 
In other words, the words $(\mc,\ms)$ are completely matchable, as well as any of their suffixes of equal sizes, and this input "deletes" whatever admissible couple customer/server is present in the system. 

As is easily seen, a strong erasing couple $(\mc,\ms)$ for $(\maB,\phi)$ is an erasing couple of any single-letter buffer detail $(c,s)$, for $(c,s)\in E^c$.

\begin{lemma}
\label{lemma:erasing}
Let $\phi$ be a sub-additive matching policy, and $(\mc,\ms)$ be a strong erasing couple for $\maB$ and $\phi$. 
Then for any $(\mw,\mz) \in \CE$, we have that $\left|Q_\phi(\mw\mc,\mz\ms)\right| < \left|Q_\phi(\mw,\mz)\right|$. 
\end{lemma}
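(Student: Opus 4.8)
The plan is to split the arrival word $\mw\mc$ (and $\mz\ms$) at the junction between the buffer content $(\mw,\mz)$ and the erasing couple $(\mc,\ms)$, peeling off one incompatible customer/server pair which $(\mc,\ms)$ deletes entirely, and to bound the remaining prefix by sub-additivity. Lists of preferences are suppressed from the notation, as in the statement: since the definition of a (strong) erasing couple and Definition~\ref{def:subadd} are both quantified over all admissible lists, no step below depends on that choice.

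First I would record that, for every $(\mw,\mz)\in\CE$, no matching is ever carried out while the input word $(\mw,\mz)$ is processed: by the defining constraint of $\CE$ no $i$-customer and $j$-server with $(i,j)\in E$ are ever simultaneously in the buffer — a property preserved along the run, since nothing ever leaves the buffer — so under the buffer-first assumption $Q_\phi(\mw,\mz)=(\mw,\mz)$ and $\left|Q_\phi(\mw,\mz)\right|=|\mw|+|\mz|$. Hence the inequality is void unless both words are non-empty, and (as seen on simple examples with $|\mw|\neq|\mz|$, where a lone unmatched customer may be re-matched rather than deleted by $(\mc,\ms)$) it genuinely requires $|\mw|=|\mz|$; so I restrict to $(\mw,\mz)\in\CE_0\setminus\{\emptystate\}$ and set $N:=|\mw|=|\mz|\ge 1$. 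Write $\mw=\mw^{-}w_{N}$ and $\mz=\mz^{-}z_{N}$. Since $w_N$ occurs in $\mw$ and $z_N$ occurs in $\mz$, the defining constraint of $\CE$ forces the pair of classes $(w_N,z_N)$ into $E^c$; hence the single-letter buffer detail $(w_N,z_N)$ lies in $\CE_0$, and by the observation made immediately before the statement the strong erasing couple $(\mc,\ms)$ is an erasing couple of $(w_N,z_N)$, which by definition means $Q_\phi(w_N\mc,z_N\ms)=\emptyset$. On the other hand $(\mw^{-},\mz^{-})\in\CE_0$ (deleting letters cannot break the constraint defining $\CE$), so the preliminary remark gives $\left|Q_\phi(\mw^{-},\mz^{-})\right|=|\mw^{-}|+|\mz^{-}|=2(N-1)$.

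It then remains to combine the two pieces by sub-additivity: applying Definition~\ref{def:subadd} to $\phi$ with $(\mc',\ms')=(\mw^{-},\mz^{-})$ and $(\mc'',\ms'')=(w_N\mc,z_N\ms)$ — the length conditions hold because $|\mc|=|\ms|$ for a strong erasing couple, and $\mc'\mc''=\mw\mc$, $\ms'\ms''=\mz\ms$ — and summing the two resulting inequalities yields
\[
\left|Q_\phi(\mw\mc,\mz\ms)\right|\ \le\ \left|Q_\phi(\mw^{-},\mz^{-})\right|+\left|Q_\phi(w_N\mc,z_N\ms)\right|\ =\ 2(N-1)+0\ <\ 2N\ =\ \left|Q_\phi(\mw,\mz)\right|,
\]
which is the claim. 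The only delicate point is the first paragraph — identifying $\left|Q_\phi(\mw,\mz)\right|$ with $|\mw|+|\mz|$ and noticing that equal-length states are the pertinent setting; once that is settled, the lemma is a one-line consequence of sub-additivity together with the single-letter erasing property.
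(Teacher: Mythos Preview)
Your proof is correct and follows essentially the same approach as the paper: peel off the last letter pair $(w_N,z_N)$, observe that the strong erasing couple annihilates it, and bound the remaining prefix by sub-additivity. You supply more justification than the paper does --- in particular the preliminary observation that $Q_\phi(\mw,\mz)=(\mw,\mz)$ for $(\mw,\mz)\in\CE$, and the restriction to $(\mw,\mz)\in\CE_0\setminus\{\emptystate\}$ --- both of which the paper's one-line proof tacitly assumes (its proof uses $|\mw|$ as the common length of $\mw$ and $\mz$, and the strict inequality is indeed false at $\emptystate$).
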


\begin{proof}
From the sub-additivity of $\phi$ and the very definition of a strong erasing couple,
\begin{equation*}
\left|Q_\phi(\mw\mc,\mz\ms)\right| \le \left|Q_\phi\left(w_1...w_{|\mw|-1},z_1...z_{|\mw|-1}\right)\right|+\left|Q_\phi\left(w_{|\mw|}\mc,z_{|\mw|}\ms\right)\right| 
                                                       =\left|Q_\phi(\mw,\mz)\right|-1.
                                                       \end{equation*} 
\end{proof}

\subsection{Structures admitting strong erasing couples}
\label{subsec:structureerasing}
Observe the following, 
\begin{proposition}
\label{pro:strongcouple}
There exists a strong erasing couple for the bipartite structure $\mathcal B=(\maC,\maS,E,F)$ and any admissible policy $\phi$, 
whenever there exist two subsets $\check{\maC} \subseteq \maC$ and $\check{\maS} \subseteq \maS$ such that 
\begin{itemize}
\item[(i)] $\maS(\check{\maC})=\maS\quad\mbox{ and }\quad \maC(\check{\maS})=\maC;$ 
\item[(ii)] Denoting by $\check{\maG}$ the sub-graph induced by $\check{\maC} \cup \check{\maS}$ in 
$(\maC \cup \maS,E)$, there exists a (non necessarily simple) alternating path $\mathscr P=i_1 \v j_1 \v i_2 \v j_2 \v ... \v i_q \v j_q$ of $\check{\maG}$ spanning the whole set $\check{\maC} \cup \check{\maS}$, and such that 
for all $\ell \in \llbracket 1,q \rrbracket,\,(i_\ell,j_\ell) \in F,$ 
\end{itemize}
and either of the three following conditions is satisfied: 
\begin{itemize}
\item[(iiia)] The sub-graph $\check{\maG}$ is bipartite complete, i.e. for all $k,\ell \in \llbracket 1,q \rrbracket$, $(i_k,j_\ell) \in E$;
\item[(iiib)] 
For all $\ell \in \llbracket 2,q \rrbracket,\,(i_\ell,j_{\ell-1})\in F$, and $\maC(j_q) \cap \check{\maC} = \{i_q\}$;
\item[(iiic)] For all $\ell \in \llbracket 2,q \rrbracket,\,(i_\ell,j_{\ell-1})\in F$, and $\maS(i_1) \cap \check{\maS} = \{j_1\}$.
\end{itemize}
\end{proposition}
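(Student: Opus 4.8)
The plan is to construct an explicit candidate strong erasing couple $(\mc,\ms)$ and verify the two defining properties \eqref{eq:defstrongcouple1} and \eqref{eq:defstrongcouple2}. The natural candidate is to read off the spanning alternating path $\mathscr P = i_1\v j_1 \v i_2 \v j_2 \v \cdots \v i_q\v j_q$ from condition (ii): since each $(i_\ell,j_\ell)\in F$, we may let $(\mc,\ms)$ be the admissible input in which the couples $(i_1,j_1),\dots,(i_q,j_q)$ arrive in that order (possibly iterated a bounded number of times, depending on which of (iiia)--(iiic) holds, to guarantee full erasure). Because $\mathscr P$ spans $\check\maC\cup\check\maS$ and by (i) $\maS(\check\maC)=\maS$, $\maC(\check\maS)=\maC$, every class of customer and server is reachable, which is what will let the couple ``delete'' any admissible buffer content.

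First I would check \eqref{eq:defstrongcouple1}: every suffix $(\breve\mc,\breve\ms)$ of $(\mc,\ms)$, together with any lists of preferences, must be completely matched. The key point is that each arriving couple $(i_\ell,j_\ell)$ is itself incompatible ($(i_\ell,j_\ell)\notin E$ is forced since $i_\ell,j_\ell$ lie in a common independent-type set along $\mathscr P$ --- more precisely this must be extracted from the alternating-path structure), so upon arrival $i_\ell$ looks for a compatible server already in buffer and $j_\ell$ for a compatible customer. Under the \emph{buffer-first} assumption and admissibility of $\phi$, I would argue by induction along the path that at each step the previously arrived partner of the opposite type is still present and compatible, so each new arrival is matched immediately against buffer content, leaving the buffer empty (or reduced) after each couple. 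Here is where the case split (iiia)/(iiib)/(iiic) enters: in case (iiia) the induced subgraph $\check\maG$ is bipartite complete, so any incoming $i_\ell$ matches \emph{any} waiting $j_k$ regardless of $\phi$'s tie-breaking; in cases (iiib)/(iiic) the extra edges $(i_\ell,j_{\ell-1})\in F$ and the ``boundary'' conditions $\maC(j_q)\cap\check\maC=\{i_q\}$ (resp. $\maS(i_1)\cap\check\maS=\{j_1\}$) pin down the match uniquely, forcing a cascading chain of matches along $\mathscr P$ that clears the buffer no matter how $\phi$ resolves ties.

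Next I would verify \eqref{eq:defstrongcouple2}: for an arbitrary $(i,j)\in E^c$ prepended to the input, with arbitrary preference heads $(\alpha,\beta)$, one must still reach the empty state. Since $(i,j)\notin E$ the pair cannot match each other, so $i$ enters the (empty) customer buffer and $j$ the server buffer; then the path-input arrives on top. By (i), $i\in\maC = \maC(\check\maS)$, so some server class on the path is compatible with $i$, and symmetrically for $j$; combined with the complete-matchability argument of the previous paragraph (applied to the buffer $(i,j)$ rather than $\emptyset$, i.e. invoking that $(\mc,\ms)$ erases this two-letter buffer), the whole system empties. I expect the main obstacle to be the case analysis in \eqref{eq:defstrongcouple1}: one must argue \emph{uniformly over all admissible policies $\phi$ and all preference lists}, so the argument cannot use any policy-specific structure and must instead show that the path-plus-incompatibility data forces the matches to be essentially deterministic --- handling the tie-breaking freedom correctly in cases (iiib) and (iiic), where only the endpoint constraint rules out spurious alternative matches, is the delicate part, and may require iterating the path input enough times that any ``wrong'' early choice is absorbed.
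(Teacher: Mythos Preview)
Your overall strategy --- build $(\mc,\ms)$ by reading off the alternating path $\mathscr P$, possibly iterated in cases (iiib)/(iiic) --- is exactly what the paper does. But your account of the mechanism contains a sign error that inverts the whole picture, and you have misidentified where the work lies.

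The key mistake: you write that each arriving couple $(i_\ell,j_\ell)$ is incompatible, $(i_\ell,j_\ell)\notin E$. This is false. The notation $i_\ell\,\v\,j_\ell$ in $\mathscr P$ means precisely that $(i_\ell,j_\ell)\in E$. Consequently, property \eqref{eq:defstrongcouple1} is \emph{trivial}, not the main obstacle: when the buffer is empty and a couple $(i_\ell,j_\ell)\in E\cap F$ arrives, the buffer-first rule matches them to each other on the spot, and the buffer stays empty. The same holds for any suffix. (In case (iiib) the paper takes $(\mc,\ms)=(i_1\cdots i_q i_2\cdots i_q,\; j_1\cdots j_q j_1\cdots j_{q-1})$, of length $2q-1$; the second block uses the $F$-edges $(i_\ell,j_{\ell-1})$ from (iiib), and since $j_{\ell-1}\,\v\,i_\ell$ in $\mathscr P$ these pairs are again in $E$ and match on arrival.)

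All the genuine work is in \eqref{eq:defstrongcouple2}, and here your sketch is too thin. Once $(i,j)\in E^c$ sits in the buffer, the first arriving pair $(i_1,j_1)$ may no longer match itself: by buffer-first, $i_1$ must be matched into the server buffer if it finds a compatible server there (namely $j$, if $(i_1,j)\in E$), and likewise $j_1$ with $i$. From that point on the dynamics depend on where along the path $i$ and $j$ first find compatible partners, and the policy $\phi$ may make nontrivial choices. The paper lets $j_p$ be the first server in $\ms$ compatible with $i$ and $i_r$ the first customer in $\mc$ compatible with $j$, and does a careful case split on the relative order of $p$ and $r$. In case (iiia) bipartite completeness of $\check\maG$ makes each sub-case short. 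In case (iiib), the asymmetric sub-case $p<r$ is delicate: after $i$ is matched, one residual customer $i_{p_m}$ can remain unmatched for a while, and one must track it through a sequence of indices $p_0<p_1<\cdots<p_m$ until either it meets a compatible $j_k$ with $k>r$, or it survives to the second copy of the path --- this is exactly why the input is doubled, and why the endpoint condition $\maC(j_q)\cap\check\maC=\{i_q\}$ is needed (it forces $j_q$ to wait for the second $i_q$, preventing a premature ``wrong'' match). Your intuition that iteration absorbs wrong early choices is correct, but the argument that a single extra pass suffices, uniformly in $\phi$, requires this explicit tracking and is not a consequence of the suffix-matchability you discuss.
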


\begin{proof}
All the arguments below hold true regardless of the lists of preferences of the incoming items, so we drop these parameters from all notation for short. 
Assume that (i) and (ii) hold, and fix a couple $(i,j) \in \left(\maC\times \maS\right) \setminus E.$ We examine the three alternative third conditions separately: 

\medskip
 
$\underline{\mbox{Case (iiia)}}$. Suppose that (i), (ii) and (iiia) are satisfied, and consider the couple 
\[(\mc,\ms) = \left(i_1...i_q\,,\,j_1j_2...j_q\right)\] which, from assumption (ii), clearly satisfies (\ref{eq:defstrongcouple1}).  
From (i), both $i$ and $j$ have a neighbor in $\check{\maC} \cup \check{\maS}$, and we let $j_p$ be the first letter in $\ms$ compatible with $i$, and $i_r$ be the first letter in $\mc$ that is compatible with $j$. 
We have three different cases, according to whichever comes first:  
   \begin{itemize}
   \item[(a1)] If $r=p$ (i.e. the respective neighbors of $i$ and $j$ arrive contemporarily), $i_\ell$ is matched with $j_\ell$ for any $\ell \in \llbracket 1,p-1 \rrbracket$ 
           (if the later set is non-empty). Then,  
           $i$ and $j$ are respectively matched with $j_p$ and $i_p$, and all subsequent couples are matched together on the fly, so $Q_\phi\left(i\mc,j\ms\right)=\emptyset$ (see the left display of 
   Figure \ref{Fig:strongcouple0}) and (\ref{eq:defstrongcouple2}) holds true. 
   \item[(a2)] Suppose now that $r < p$. First, for all $\ell \in \llbracket 1,r-1 \rrbracket$ (if this set is non-empty), the incoming couples $(i_\ell,j_\ell)$ are matched together upon arrival. 
   Then, $j$ is matched with $i_p$, and for all $\ell \in \llbracket r,p-1 \rrbracket$, the incoming $i_{\ell+1}$ is matched with $j_{\ell}$. Finally, 
   $j_p$ is matched with $i$ and, for all $\ell \in \llbracket p+1,q \rrbracket$ (if the later is non-empty), the incoming couples $(i_\ell,j_\ell)$ are matched together (middle display of Figure \ref{Fig:strongcouple0}), 
   and (\ref{eq:defstrongcouple2}) again holds.  
   \item[(a3)] The case $p < r$ is symmetric to (a2) (right display of Figure \ref{Fig:strongcouple0}). 
   \end{itemize} 
\begin{figure}[h!]
\begin{center}
\begin{tikzpicture}[scale=0.75]
\fill (-1,0) circle (2pt) node[above] {\scriptsize{$\mathbf{i}$}} ;
\fill (-0.5,0) circle (2pt) node[above] {\scriptsize{$i_1$}} ;
\fill (0,0) node[above] {\scriptsize{...}} ;
\fill (0.5,0) circle (2pt) node[above] {\,\scriptsize{$i_{r-1}$}} ;
\fill (1,0) circle (2pt) node[above] {\,\,\scriptsize{$\mathbf{i_r}$}} ;
\fill (1.5,0) circle (2pt) node[above] {\,\,\,\scriptsize{$i_{r+1}$}} ;
\fill (2,0) node[above] {\,\,\scriptsize{...}} ;
\fill (2.5,0) circle (2pt) node[above] {\,\,\scriptsize{$i_q$}} ;
\fill (-1,-1) circle (2pt) node[below] {\scriptsize{$\mathbf{j}$}} ;
\fill (-0.5,-1) circle (2pt) node[below] {\scriptsize{$j_1$}} ;
\fill (0,-1) node[below] {\scriptsize{...}};
\fill (0.5,-1) circle (2pt) node[below] {\,\scriptsize{$j_{p-1}$}} ;
\fill (1,-1) circle (2pt) node[below] {\,\,\scriptsize{$\mathbf{j_p}$}} ;
\fill (1.5,-1) circle (2pt) node[below] {\,\,\,\scriptsize{$j_{p+1}$}} ;
\fill (2,-1) node[below] {\,\,\scriptsize{...}};
\fill (2.5,-1) circle (2pt) node[below] {\,\,\scriptsize{$j_q$}} ;
\draw[-] (-1,0)-- (1,-1);
\draw[-] (-1,-1)-- (1,0);
\draw[-] (-0.5,0)-- (-0.5,-1);
\draw[-] (0.5,0)-- (0.5,-1);
\draw[-] (1.5,0)-- (1.5,-1);
\draw[-] (2.5,0)-- (2.5,-1);
\fill (4,0) circle (2pt) node[above] {\scriptsize{$\mathbf{i}$}} ;
\fill (4.5,0) circle (2pt) node[above] {\scriptsize{$i_1$}} ;
\fill (5,0) node[above] {\scriptsize{...}} ;
\fill (5.5,0) circle (2pt) node[above] {\,\scriptsize{$i_{r-1}$}} ;
\fill (6,0) circle (2pt) node[above] {\,\,\scriptsize{$\mathbf{i_r}$}} ;
\fill (6.5,0) circle (2pt) node[above] {\,\,\,\scriptsize{$i_{r+1}$}} ;
\fill (7,0) node[above] {\,\scriptsize{...}} ;
\fill (7.5,0) node[above] {\scriptsize{...}};
\fill (8,0) circle (2pt) node[above] {\scriptsize{$i_p$}} ;
\fill (8.5,0) circle (2pt) node[above] {\,\scriptsize{$i_{p+1}$}} ;
\fill (9,0) node[above] {\,\,\scriptsize{...}};
\fill (9.5,0) circle (2pt) node[above] {\scriptsize{$i_{q}$}} ;
\fill (4,-1) circle (2pt) node[below] {\scriptsize{$\mathbf{j}$}} ;
\fill (4.5,-1) circle (2pt) node[below] {\scriptsize{$j_1$}} ;
\fill (5,-1) node[below] {\scriptsize{...}} ;
\fill (5.5,-1) circle (2pt) node[below] {\,\scriptsize{$j_{r-1}$}} ;
\fill (6,-1) circle (2pt) node[below] {\,\,\scriptsize{$j_r$}} ;
\fill (6.5,-1) node[below] {\,\scriptsize{...}};
\fill (7,-1) node[below] {\,\scriptsize{...}};
\fill (7.5,-1) circle (2pt) node[below] {\scriptsize{$j_{p-1}$}} ;
\fill (8,-1) circle (2pt) node[below] {\,\,\scriptsize{$\mathbf{j_p}$}} ;
\fill (8.5,-1) circle (2pt) node[below] {\,\,\,\scriptsize{$j_{p+1}$}} ;
\fill (9,-1) node[below] {\,\,\scriptsize{...}};
\fill (9.5,-1) circle (2pt) node[below] {\,\scriptsize{$j_q$}} ;
\draw[-] (4,0)-- (8,-1);
\draw[-] (4,-1)-- (6,0);
\draw[-] (4.5,-1)-- (4.5,0);
\draw[-] (5.5,0)-- (5.5,-1);
\draw[-] (6.5,0)-- (6,-1);
\draw[-] (8,0)-- (7.5,-1);
\draw[-] (8.5,0)-- (8.5,-1);
\draw[-] (9.5,0)-- (9.5,-1);
\fill (11,0) circle (2pt) node[above] {\scriptsize{$\mathbf{i}$}} ;
\fill (11.5,0) circle (2pt) node[above] {\scriptsize{$i_1$}} ;
\fill (12,0) node[above] {\scriptsize{...}} ;
\fill (12.5,0) circle (2pt) node[above] {\,\scriptsize{$i_{p-1}$}} ;
\fill (13,0) circle (2pt) node[above] {\,\scriptsize{$i_p$}} ;
\fill (13.5,0) node[above] {\scriptsize{...}} ;
\fill (14,0) node[above] {\scriptsize{...}} ;
\fill (14.5,0) circle (2pt) node[above] {\scriptsize{$i_{r-1}$}} ;
\fill (15,0) circle (2pt) node[above] {\,\,\scriptsize{$\mathbf{i_{r}}$}} ;
\fill (15.5,0) circle (2pt) node[above] {\,\,\,\scriptsize{$i_{r+1}$}} ;
\fill (16,0) node[above] {\,\,\scriptsize{...}} ;
\fill (16.5,0) circle (2pt) node[above] {\scriptsize{$i_{q}$}} ;
\fill (11,-1) circle (2pt) node[below] {\scriptsize{$\mathbf{j}$}} ;
\fill (11.5,-1) circle (2pt) node[below] {\scriptsize{$j_1$}} ;
\fill (12,-1) node[below] {\scriptsize{...}} ;
\fill (12.5,-1) circle (2pt) node[below] {\,\scriptsize{$j_{p-1}$}} ;
\fill (13,-1) circle (2pt) node[below] {\,\,\scriptsize{$\mathbf{j_p}$}} ;
\fill (13.5,-1) circle (2pt) node[below] {\,\,\,\scriptsize{$j_{p+1}$}} ;
\fill (14,-1) node[below] {\,\scriptsize{...}};
\fill (14.5,-1) node[below] {\scriptsize{...}};
\fill (15,-1) circle (2pt) node[below] {\scriptsize{$j_r$}} ;
\fill (15.5,-1) circle (2pt) node[below] {\,\scriptsize{$j_{r+1}$}} ;
\fill (16,-1) node[below] {\,\,\scriptsize{...}};
\fill (16.5,-1) circle (2pt) node[below] {\scriptsize{$j_{q}$}} ;
\draw[-] (11,0)-- (13,-1);
\draw[-] (11,-1)-- (15,0);
\draw[-] (11.5,0)-- (11.5,-1);
\draw[-] (12.5,0)-- (12.5,-1);
\draw[-] (13,0)-- (13.5,-1);
\draw[-] (14.5,0)-- (15,-1);
\draw[-] (15.5,0)-- (15.5,-1);
\draw[-] (16.5,0)-- (16.5,-1);
\end{tikzpicture}
\caption[smallcaption]{Perfect matchings of $(i\mc,j\ms)$, case (iiia).} \label{Fig:strongcouple0}
\end{center}
\end{figure}  

\bigskip

$\underline{\mbox{Case (iiib)}}$. Suppose now that (i), (ii) and (iiib) hold, and let $(\mc,\ms)$ be the following couple of length $2q-1$, 
\[(\mc,\ms) = \left(i_1...i_qi_2i_3...i_q\,,\,j_1j_2...j_qj_1j_2...j_{q-1}\right),\]
which is admissible from the first assertion of assumption (iiib), and clearly satisfies (\ref{eq:defstrongcouple1}) by the very definition of $\maP$.  
Again from (i), both $i$ and $j$ have a neighbor in $\maP$, 
and we let again $j_p$ be the first compatible server with $i$ in $\mc$ and $i_r$ be the first compatible customer with $j$ in $\ms$. The three sub-cases are as above: 
           \begin{itemize}
           \item[(b1)] $p=r$. This case is analog to (a1);
           \item[(b2)] $p>r$, that is, the first match of $j$ arrives before that of $i$. This case is analog to (a2);
           \item[(b3)] $p<r$. First, $i_\ell$ is matched with $j_\ell$ for any $\ell \in \llbracket 1,p-1 \rrbracket$ if the later set is non-empty. 
           Then $i$ is matched with $j_p$, and we apply the following procedure: 
                \begin{itemize}
                \item we set $p_0:= p;$ 
                \item as long as the set \[\maA_k:=\{\ell \in \llbracket p_{k}+1,r-1 \rrbracket \,:\,i_{p_{k}} \v j_\ell\}\]
                is non-empty, we set $p_{k+1}=\min \maA_k.$ 
                \item we let $m$ be the smallest index $k$ such that $\maA_k = \emptyset$.    
                \end{itemize}
          Observe that $m \ge 1$. Then, for any $k \in \llbracket 0,m-1 \rrbracket$, $i_{p_k}$ is matched with $j_{p_{k+1}}$, and for any $\ell \in \llbracket p_k+1,p_{k+1}-1 \rrbracket$ 
         (if the latter is non-empty), $i_\ell$ is matched with $j_\ell$. Finally, for any $\ell \in \llbracket p_m+1,r-1 \rrbracket$ (if the latter set is non-empty), $i_\ell$ is matched with $j_\ell$, and $j$ is matched with 
        $i_r$. Thus, after $r$ arrivals and whatever the matching policy is, we are in the following alternative: either $i_{p_m} \v j_r$, in which case these two items are matched together and the system is empty, or 
        $i_{p_m} \pv j_r$, and then only the couple $(i_{p_m},j_r)$ remains to be matched. In the first case, as it is clear that all subsequent entering couples of $(\mc,\ms)$ are matched together upon arrival, so 
        $Q_\phi(i\mc,j\ms)=\emptyset.$ Only the second case remains to be treated, in other words we are rendered to prove that 
        \begin{equation}
        \label{eq:strongcouple3}
        Q_\phi\left(i_{p_m}i_{r+1}...i_qi_2i_3...i_q,j_rj_{r+1}...j_qj_1...j_{q-1}\right)=\emptyset. 
        \end{equation}
        Again, we have two sub-cases: 
              \begin{itemize}
                 \item[$\star$] There exists an index $k \in \llbracket r+1,q\rrbracket$ such $i_{p_m} \v j_k$ (take the smallest such $k$). Then, 
                 whatever $\phi$ is, from the buffer-first assumption, for all $\ell \in \llbracket r,k-1 \rrbracket$, $j_\ell$ is matched with $i_{\ell +1}$, and then $i_{r_m}$ is matched 
                 with $j_k$. Thus the matching is complete after $k$ arrivals, and $\phi$ matches all incoming couples on the fly after that, so (\ref{eq:strongcouple3}) holds; see the top display of Figure \ref{Fig:strongcouple}. 
                 \item[$\star\star$] $r=q$, or $i_{p_m}$ does not find a match in the set $\{j_{r+1},...,j_q\}$. Then, for all $\ell \in k \in \llbracket r,q-1 \rrbracket$ (if the latter is non-empty), 
                 $\phi$ matches $i_\ell$ with $j_\ell$, so after $q$ arrivals only the couple $(i_{r_m},j_q)$ remains to be matched. Now, observe that the set 
                 \[\maB:=\{\ell \in \llbracket 1,p_m \rrbracket \,:\,i_{p_{m}} \v j_\ell\}\] 
                 is non-empty, since, by the definition of $\maP$, $i_{p_m} \v j_1$ if $p_m=1$, and otherwise $i_{p_m} \v j_{p_m-1}$ and $i_{p_m} \v j_{p_m}$. 
                 We let $n:=\min\maB$. Then, from the second assertion of (iiib), $j_q$ necessarily waits for the next $i_q$ to find a compatible item. Therefore, before that, 
                 $\phi$ necessarily matches, for any $\ell \in \llbracket 1,n-1 \rrbracket$ (if the latter is non-empty), the second incoming $j_\ell$ with the second incoming $i_{\ell+1}$. 
                 After that, $i_{r_m}$ is matched with $j_n$, and then for all $\ell \in \llbracket n,q-1 \rrbracket$ (if the latter is non-empty), $\phi$ matched the second $i_\ell$ with the second $j_\ell$. 
                 Finally, the still unmatched $j_q$ is matched with the second entering $i_q$ (see the bottom display of Figure \ref{Fig:strongcouple}). 
                Again, (\ref{eq:strongcouple3}) holds true, which concludes the proof of (\ref{eq:defstrongcouple2}) in that case.  
                \end{itemize} 
        \end{itemize}     
\begin{figure}[h!]
\begin{center}
\begin{tikzpicture}[scale=0.57]
\fill (4,0) circle (2pt) node[above] {\scriptsize{$\mathbf{i}$}} ;
\fill (5,0) circle (2pt) node[above] {\scriptsize{$i_1$}} ;
\fill (6,0) node[above] {...} ;
\fill (7,0) circle (2pt) node[above] {\scriptsize{$i_{p-1}$}} ;
\fill (8,0) circle (2pt) node[above] {\scriptsize{$i_{p_0}$}} ;
\fill (9,0) circle (2pt) node[above] {\scriptsize{$i_{p_0+1}$}} ;
\fill (10,0) node[above] {...} ;
\fill (11,0) circle (2pt) node[above] {\scriptsize{$i_{p_1-1}$}};
\fill (12,0) circle (2pt) node[above] {\scriptsize{$i_{p_1}$}};
\fill (13,0) node[above] {...} ;
\fill (14,0) node[above] {...} ;
\fill (15,0) circle (2pt) node[above] {\scriptsize{$i_{p_{m}}$}} ;
\fill (16,0) circle (2pt) node[above] {\,\,\scriptsize{$i_{p_{m}+1}$}} ;
\fill (17,0) node[above] {...} ;
\fill (18,0) circle (2pt) node[above] {\scriptsize{$i_{r-1}$}};
\fill (19,0) circle (2pt) node[above] {\scriptsize{$\mathbf{i_r}$}};
\fill (20,0) circle (2pt) node[above] {\scriptsize{$i_{r+1}$}};
\fill (21,0) node[above] {...};
\fill (22,0) circle (2pt) node[above] {\scriptsize{$i_{k}$}};
\fill (23,0) circle (2pt) node[above] {\scriptsize{$i_{k+1}$}};
\fill (24,0) node[above] {...} ;
\fill (25,0) circle (2pt) node[above] {\scriptsize{$i_{q}$}}; 
\fill (26,0) circle (2pt) node[above] {\scriptsize{$i_{2}$}};
\fill (27,0) node[above] {...} ;
\fill (28,0) circle (2pt) node[above] {\scriptsize{$i_{q}$}};
\fill (4,-1) circle (2pt) node[below] {\scriptsize{$\mathbf{j}$}} ;
\fill (5,-1) circle (2pt) node[below] {\scriptsize{$j_1$}} ;
\fill (6,-1) node[below] {...} ;
\fill (7,-1) circle (2pt) node[below] {\scriptsize{$j_{p-1}$}} ;
\fill (8,-1) circle (2pt) node[below] {\scriptsize{$\mathbf{j_{p}}$}} ;
\fill (9,-1) circle (2pt) node[below] {\scriptsize{$j_{p_0+1}$}} ;
\fill (10,-1) node[below] {...} ;
\fill (11,-1) circle (2pt) node[below] {\scriptsize{$j_{p_1}$}};
\fill (12,-1) circle (2pt) node[below] {\scriptsize{$j_{p_1}$}};
\fill (13,-1) node[below] {...} ;
\fill (14,-1) node[below] {...} ;
\fill (15,-1) circle (2pt) node[below] {\scriptsize{$j_{p_{m}}$}} ;
\fill (16,-1) circle (2pt) node[below] {\,\,\scriptsize{$j_{p_{m}+1}$}} ;
\fill (17,-1) node[below] {...} ;
\fill (18,-1) circle (2pt) node[below] {\scriptsize{$j_{r-1}$}};
\fill (19,-1) circle (2pt) node[below] {\scriptsize{$j_{r}$}};
\fill (20,-1) node[below] {...} ;
\fill (21,-1) circle (2pt) node[below] {\scriptsize{$j_{k-1}$}};
\fill (22,-1) circle (2pt) node[below] {\scriptsize{$j_{k}$}};
\fill (23,-1) circle (2pt) node[below] {\scriptsize{$j_{k+1}$}};
\fill (24,-1) node[below] {...} ;
\fill (25,-1) circle (2pt) node[below] {\scriptsize{$j_{q}$}}; 
\fill (26,-1) circle (2pt) node[below] {\scriptsize{$j_{1}$}};
\fill (27,-1) node[below] {...} ;
\fill (28,-1) circle (2pt) node[below] {\scriptsize{$j_{q-1}$}}; 
\draw[-] (4,0)-- (8,-1);
\draw[-] (4,-1)-- (19,0);
\draw[-] (5,-1)-- (5,0);
\draw[-] (7,-1)-- (7,0);
\draw[-] (12,-1)-- (8,0);
\draw[-] (9,0)-- (9,-1);
\draw[-] (11,0)-- (11,-1);
\draw[-] (12,0)-- (12.5,-0.3);
\draw[-,dotted] (12.5,-0.3)-- (12.75,-0.45);
\draw[-,dotted] (14.25,-0.1)-- (14.5,-0.4);
\draw[-] (14.5,-0.4)-- (15,-1);
\draw[-] (16,0)-- (16,-1);
\draw[-] (18,0)-- (18,-1);
\draw[-] (20,0)-- (19,-1);
\draw[-] (22,0)-- (21,-1);
\draw[-] (15,0)-- (22,-1);
\draw[-] (23,0)-- (23,-1);
\draw[-] (25,0)-- (25,-1);
\draw[-] (26,0)-- (26,-1);
\draw[-] (28,0)-- (28,-1);
\fill (4,-3) circle (2pt) node[above] {\scriptsize{$\mathbf{i}$}} ;
\fill (5,-3) circle (2pt) node[above] {\scriptsize{$i_1$}} ;
\fill (6,-3) node[above] {...} ;
\fill (7,-3) circle (2pt) node[above] {\scriptsize{$i_{p-1}$}} ;
\fill (8,-3) circle (2pt) node[above] {\scriptsize{$i_{p_0}$}} ;
\fill (9,-3) circle (2pt) node[above] {\scriptsize{$i_{p_0+1}$}} ;
\fill (10,-3) node[above] {...} ;
\fill (11,-3) circle (2pt) node[above] {\scriptsize{$i_{p_1-1}$}};
\fill (12,-3) circle (2pt) node[above] {\scriptsize{$i_{p_1}$}};
\fill (13,-3) node[above] {...} ;
\fill (14,-3) node[above] {...} ;
\fill (15,-3) circle (2pt) node[above] {\scriptsize{$i_{p_{m}}$}} ;
\fill (16,-3) circle (2pt) node[above] {\,\,\scriptsize{$i_{p_{m}+1}$}} ;
\fill (17,-3) node[above] {...} ;
\fill (18,-3) circle (2pt) node[above] {\scriptsize{$i_{r-1}$}};
\fill (19,-3) circle (2pt) node[above] {\scriptsize{$\mathbf{i_r}$}};
\fill (20,-3) circle (2pt) node[above] {\scriptsize{$i_{r+1}$}};
\fill (21,-3) node[above] {...};
\fill (22,-3) circle (2pt) node[above] {\scriptsize{$i_{q}$}};
\fill (23,-3) circle (2pt) node[above] {\scriptsize{$i_{2}$}};
\fill (24,-3) node[above] {...} ;
\fill (25,-3) circle (2pt) node[above] {\scriptsize{$i_{n}$}}; 
\fill (26,-3) circle (2pt) node[above] {\scriptsize{$i_{n+1}$}};
\fill (27,-3) node[above] {...} ;
\fill (28,-3) node[above] {...} ;
\fill (29,-3) circle (2pt) node[above] {\scriptsize{$i_{q-1}$}};
\fill (30,-3) circle (2pt) node[above] {\scriptsize{$i_{q}$}};
\fill (4,-4) circle (2pt) node[below] {\scriptsize{$\mathbf{j}$}} ;
\fill (5,-4) circle (2pt) node[below] {\scriptsize{$j_1$}} ;
\fill (6,-4) node[below] {...} ;
\fill (7,-4) circle (2pt) node[below] {\scriptsize{$j_{p-1}$}} ;
\fill (8,-4) circle (2pt) node[below] {\scriptsize{$\mathbf{j_{p}}$}} ;
\fill (9,-4) circle (2pt) node[below] {\scriptsize{$j_{p_0+1}$}} ;
\fill (10,-4) node[below] {...} ;
\fill (11,-4) circle (2pt) node[below] {\scriptsize{$j_{p_1}$}};
\fill (12,-4) circle (2pt) node[below] {\scriptsize{$j_{p_1}$}};
\fill (13,-4) node[below] {...} ;
\fill (14,-4) node[below] {...} ;
\fill (15,-4) circle (2pt) node[below] {\scriptsize{$j_{p_{m}}$}} ;
\fill (16,-4) circle (2pt) node[below] {\,\,\scriptsize{$j_{p_{m}+1}$}} ;
\fill (17,-4) node[below] {...} ;
\fill (18,-4) circle (2pt) node[below] {\scriptsize{$j_{r-1}$}};
\fill (19,-4) circle (2pt) node[below] {\scriptsize{$j_{r}$}};
\fill (20,-4) node[below] {...} ;
\fill (21,-4) circle (2pt) node[below] {\scriptsize{$j_{q-1}$}};
\fill (22,-4) circle (2pt) node[below] {\scriptsize{$j_{q}$}};
\fill (23,-4) circle (2pt) node[below] {\scriptsize{$j_{1}$}};
\fill (24,-4) node[below] {...} ;
\fill (25,-4) circle (2pt) node[below] {\scriptsize{$j_{n-1}$}}; 
\fill (26,-4) circle (2pt) node[below] {\scriptsize{$j_{n}$}};
\fill (27,-4) circle (2pt) node[below] {\scriptsize{$j_{n+1}$}};
\fill (28,-4) node[below] {...} ;
\fill (29,-4) node[below] {...} ;
\fill (30,-4) circle (2pt) node[below] {\scriptsize{$j_{q-1}$}}; 
\draw[-] (4,-3)-- (8,-4);
\draw[-] (4,-4)-- (19,-3);
\draw[-] (5,-4)-- (5,-3);
\draw[-] (7,-4)-- (7,-3);
\draw[-] (12,-4)-- (8,-3);
\draw[-] (9,-3)-- (9,-4);
\draw[-] (11,-3)-- (11,-4);
\draw[-] (12,-3)-- (12.5,-3.3);
\draw[-,dotted] (12.5,-3.3)-- (12.75,-3.45);
\draw[-,dotted] (14.25,-3.1)-- (14.5,-3.4);
\draw[-] (14.5,-3.4)-- (15,-4);
\draw[-] (16,-3)-- (16,-4);
\draw[-] (18,-3)-- (18,-4);
\draw[-] (20,-3)-- (19,-4);
\draw[-] (22,-3)-- (21,-4);
\draw[-] (15,-3)-- (26,-4);
\draw[-] (23,-3)-- (23,-4);
\draw[-] (25,-3)-- (25,-4);
\draw[-] (26,-3)-- (27,-4);
\draw[-] (29,-3)-- (30,-4);
\draw[-] (30,-3)-- (22,-4);
\end{tikzpicture}
\caption[smallcaption]{Perfect matchings of $(i\mc,j\ms)$ in case (b3): sub-case $\star$ (above) and sub-case $\star \star$ (below).} \label{Fig:strongcouple}
\end{center}
\end{figure} 
\bigskip 
$\underline{\mbox{Case (iiic)}}$. By symmetry between customers and servers and reading words in reverse sense, we can apply the exact argument of (iiib) for the input  
\[(\mc,\ms) = \left(i_q...i_1i_{q}...i_{2}\,,\,j_q...j_1j_{q-1}...j_{1}\right).\]
\end{proof}

With Proposition \ref{pro:strongcouple} in hand, we can now make more precise the classes of bipartite structures admitting strong erasing couples. 

\paragraph{Bi-separable graphs.} Strong erasing couples are easily obtained for bipartite structures having a 
bi-separable matching graph: 

\begin{proposition}
\label{pro:strongerasesep}
Suppose that the matching graph $\maG=(\maC\cup\maS,E)$ is bi-separable of order at least 3. 
If there exist three maximal independent sets $I_1$, $I_2$ and $I_3$ and three couples 
$(k_1,\ell_1)\in I_1,\,(k_2,\ell_2)\in I_2$ and $(k_3,\ell_3)\in I_3$ such that $F$ contains 
$(k_1,\ell_2)$, $(k_2,\ell_3)$ and $(k_3,\ell_1)$, then $(\mathcal B,\phi)$ admits at least one strong erasing couple for any admissible policy $\phi$.
\end{proposition}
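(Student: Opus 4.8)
The plan is to write down an explicit strong erasing couple assembled from the three given couples. Write $k_a\in\maC$ and $\ell_a\in\maS$ for the customer and server of the couple lying in $I_a=A_a\cup B_a$; since $I_1,I_2,I_3$ are maximal each of them is genuinely two-sided ($A_a\ne\emptyset$, $B_a\ne\emptyset$), and since the $A_i$'s (resp. the $B_i$'s) are parts of a partition they are pairwise disjoint, so $k_1,k_2,k_3$ are distinct with $k_a\in A_a$, and likewise $\ell_a\in B_a$. I would first record the structural facts used throughout, all immediate from Lemma~\ref{lemma:separable} and block-disjointness: (a) for $c\in A_a$ one has $\maS(c)=\maS\setminus B_a$, and symmetrically $\maC(s)=\maC\setminus A_a$ for $s\in B_a$; (b) consequently $(k_a,\ell_b)\in E$ if and only if $a\ne b$, so in particular the three pairs $(k_1,\ell_2),(k_2,\ell_3),(k_3,\ell_1)$ lie in $E$ as well as in $F$; (c) every incompatible couple $(i,j)\in E^{c}$ satisfies $i\in A_a$, $j\in B_a$ for one common index $a$. (One could try to route this through Proposition~\ref{pro:strongcouple}, but with $\check{\maC}=\{k_1,k_2,k_3\}$, $\check{\maS}=\{\ell_1,\ell_2,\ell_3\}$ the induced $\check{\maG}$ is a $6$-cycle, so a direct verification is more transparent.)

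I then claim that $(\mc,\ms)=(k_1k_2k_3,\,\ell_2\ell_3\ell_1)$ — the input made of the arrivals $(k_1,\ell_2),(k_2,\ell_3),(k_3,\ell_1)$, which is an admissible input precisely because each of these three pairs lies in $F$ — is a strong erasing couple for $(\maB,\phi)$, for every admissible $\phi$. Condition~\eqref{eq:defstrongcouple1} is the easy half: every suffix of $(\mc,\ms)$ is again a concatenation of pairs lying in $E$ by~(b), so fed into the empty system each of them is matched to itself on arrival by the buffer-first rule, whatever the preference lists, leaving the empty buffer detail.

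For~\eqref{eq:defstrongcouple2}, fix $(i,j)\in E^{c}$, so $i\in A_a$, $j\in B_a$ by~(c), and fix arbitrary preference lists. The key point is that, along the run starting from the buffer $(i,j)$, each buffer side holds at most one class: by~(b) every incoming pair $(k_t,\ell_{t+1})$ (indices read cyclically in $\{1,2,3\}$) is itself an edge of $E$, so even when neither incoming item finds a buffer partner the two are matched together instead of both queueing; and otherwise, by~(a), exactly one of the two incoming items is absorbed by the lone buffered item of the opposite side, the other taking its place. In particular no genuine choice is ever offered to $\phi$, so the run is policy-independent. Running the three arrivals, and using that $k_t$ clears the single buffered server iff $a\ne t$ while $\ell_{t+1}$ clears the single buffered customer iff $a\ne t+1$ (where $a$ denotes the block to which the current buffered pair is aligned): if $a\notin\{1,2\}$ (in particular whenever $a\notin\{1,2,3\}$) the first arrival already empties the system; if $a=1$ the first arrival leaves a residual pair again aligned to block $1$, which the second arrival empties; if $a=2$ the first two arrivals leave a residual pair aligned to block $2$, which the third arrival empties. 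In all cases the system is empty after at most three arrivals, any remaining arrival of $(\mc,\ms)$ being matched to itself; hence $Q_\phi(i\mc,j\ms,\cdot,\cdot)=\emptyset$, which is~\eqref{eq:defstrongcouple2}, and the proposition follows.

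The one place requiring care is that last case analysis: one must verify that whenever the system is not yet empty the residual buffer is again an \emph{aligned} pair $(\cdot\in A_a,\cdot\in B_a)$ for the \emph{same} block $a$, so that the cyclic arrival pattern keeps pressing on that block, and that it resolves within three steps uses precisely that $(k_1,\ell_2),(k_2,\ell_3),(k_3,\ell_1)$ realise the rotation $k_1\!\to\!\ell_2\!\to\!k_3\!\to\!\ell_1\!\to\!k_2\!\to\!\ell_3\!\to\!k_1$ of the blocks. Everything else — admissibility of $(\mc,\ms)$, condition~\eqref{eq:defstrongcouple1}, the buffer-size bound, the policy-independence — is immediate from~(a)--(c) and the buffer-first rule.
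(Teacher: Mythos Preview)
Your proof is correct and follows essentially the same route as the paper: the same candidate $(\mc,\ms)=(k_1k_2k_3,\ell_2\ell_3\ell_1)$, and the same case split on the block $a$ containing $(i,j)$, with the paper simply listing the resulting matches in each of the four cases rather than tracking the ``block alignment'' of the residual buffer as you do. One small imprecision: maximality of $I_a$ does not by itself force $A_a\ne\emptyset$ and $B_a\ne\emptyset$ (e.g.\ $A=\maC$, $B=\emptyset$ is maximal when $\maG$ is connected); this is instead guaranteed directly by the hypothesis, which hands you the couples $(k_a,\ell_a)\in I_a$.
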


\begin{proof}
Fix an admissible matching policy. The proof does not depend in any list of preferences, and we skip again this parameter from all notation for short. 
A strong erasing word is given by $(\mc,\ms)=(k_1k_2k_3,\ell_2\ell_3\ell_1)$. To see this, it is first immediate to observe that 
$Q_\phi(\mc,\ms)=\emptyset$ by the definition of a bi-separable graph. Any single-letter buffer detail is of the form $(c,s)$, where $c\in A$, $s \in B$ and $A\cup B$ is a maximal independent set of $\maG$. Thus we have four alternatives: 
\begin{itemize}
\item If $I=I_1$, then $M_\phi(c\mc,s\ms)$ contains the matches $(c,\ell_2)$, $(k_1,\ell_3)$, $(k_2,s)$ and $(k_3,\ell_1)$;
\item If $I=I_2$, then $M_\phi(c\mc,s\ms)$ contains $(c,\ell_3)$, $(k_1,s)$, $(k_2,\ell_1)$ and $(k_3,\ell_2)$;
\item If $I=I_3$, $M_\phi(c\mc,s\ms)$ contains $(c,\ell_2)$, $(k_1,s)$, $(k_2,\ell_3)$ and $(k_3,\ell_1)$;
\item If $\maG$ is of order at least 4 and $I\not\in\{I_1,I_2,I_3\}$, then $M_\phi(c\mc,s\ms)$ contains $(c,\ell_2)$, $(k_1,s)$, $(k_2,\ell_3)$ and $(k_3,\ell_1)$.  
\end{itemize}
In all cases we have $Q_\phi(c\mc,s\ms)=\emptyset,$ which concludes the proof.   
\end{proof}

\paragraph{GM model.} Observe that condition (ii) of Proposition \ref{pro:strongcouple} cannot hold for a GM model (i.e. $F=\left\{(c,\tilde c), c\in\maC\right\}$), as we would have 
$c \v c' = \tilde c$ for some $c \in\maC$, an absurdity. Proposition 7 in \cite{MBM17} provides alternative specific conditions for the existence of 
strong erasing words (analog to strong erasing couples in that case) for BM models. 

\paragraph{BM Model.} Consider now the case where $(\maC \cup \maS, F)$ is complete, i.e. a BM model. It is then immediate to reformulate the assumptions 
of Proposition \ref{pro:strongcouple}, to obtain the following: 

\begin{proposition}
\label{pro:strongcoupleBM}
Suppose that $\maB$ defines a BM model. If, for some subsets $\check{\maC}$ and $\check{\maS}$ such that $\maS(\check{\maC})=S$ and $\maC(\check{\maS})=C,$ there exists an alternating path $\mathscr P=i_1 \v j_1 \v i_2 \v j_2 \v ... \v i_q \v j_q$ spanning $\check{\maC} \cup \check{\maS}$, 
and such that either one of the following hold: 
\begin{itemize}
\item[(a)] the sub-graph $\check{\maG}$ induced by $\check{\maC} \cup \check{\maS}$ in 
$(\maC \cup \maS,E)$ is bipartite complete;
\item[(b)] $\maC(j_q) \cap \check{\maC} = \{i_q\}$;
\item[(c)] $\maS(i_1) \cap \check{\maS} = \{j_1\}$,
\end{itemize}
Then $(\mathcal B,\phi)$ admits at least one strong erasing couple for any admissible policy $\phi$. 
\end{proposition}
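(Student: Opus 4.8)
The plan is to obtain Proposition~\ref{pro:strongcoupleBM} as an immediate specialization of Proposition~\ref{pro:strongcouple}, exploiting the fact that in a BM model the arrival graph $(\maC\cup\maS,F)$ is complete, i.e. $F=\maC\times\maS$, so that every side condition of Proposition~\ref{pro:strongcouple} of the form ``$(\cdot,\cdot)\in F$'' becomes vacuous.

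First I would check hypothesis~(i) of Proposition~\ref{pro:strongcouple}: the requirement $\maS(\check{\maC})=\maS$ and $\maC(\check{\maS})=\maC$ is exactly the assumption made here (with $S=\maS$ and $C=\maC$). Next, for hypothesis~(ii): the induced subgraph $\check{\maG}$ is defined identically in the two statements, namely as the subgraph of $(\maC\cup\maS,E)$ induced by $\check{\maC}\cup\check{\maS}$, and we are given an alternating path $\mathscr P=i_1\v j_1\v i_2\v j_2\v\cdots\v i_q\v j_q$ of $\check{\maG}$ spanning $\check{\maC}\cup\check{\maS}$; the extra demand in~(ii) that $(i_\ell,j_\ell)\in F$ for every $\ell\in\llbracket 1,q\rrbracket$ holds automatically since $F=\maC\times\maS$.

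It then remains to match the three alternatives one to one. Alternative~(a) here is verbatim alternative~(iiia). Alternative~(b) here, combined with the automatic fact that $(i_\ell,j_{\ell-1})\in F$ for all $\ell\in\llbracket 2,q\rrbracket$ (again because $F$ is complete), is exactly alternative~(iiib); symmetrically, alternative~(c) here together with the same automatic fact is exactly alternative~(iiic). In each of the three cases the full set of hypotheses of Proposition~\ref{pro:strongcouple} is therefore satisfied, and that proposition produces a strong erasing couple for $(\maB,\phi)$ for every admissible matching policy $\phi$, which is the claim.

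I do not expect any genuine obstacle: the statement is a corollary of Proposition~\ref{pro:strongcouple}, and the only point to be careful about is the bookkeeping showing that completeness of the arrival graph in the BM model turns the ``arrival-side'' memberships in $F$ that appear throughout Proposition~\ref{pro:strongcouple} into trivialities, so that only the purely matching-graph conditions survive, namely~(i), the existence of the spanning alternating path, and one of (a)/(b)/(c).
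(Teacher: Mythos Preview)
Your proposal is correct and matches the paper's own treatment: the paper also presents Proposition~\ref{pro:strongcoupleBM} as an immediate reformulation of Proposition~\ref{pro:strongcouple}, noting that in a BM model $F=\maC\times\maS$ so all the ``$(\cdot,\cdot)\in F$'' side conditions become vacuous. Your bookkeeping identifying (a)/(b)/(c) with (iiia)/(iiib)/(iiic) is exactly the intended argument.
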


Clearly, condition (b) (respectively, (c)) above holds for $\check{\maC} \equiv\maC$ and $\check{\maS} \equiv\maS$ (as $(\maC\cup\maS,E)$ is connected), whenever $(\maC \cup \maS,E)$ contains a customer (resp. server) node of degree 1, by setting 
$i_1$ (resp., $j_q$) as the latter node.

\subsection{Structures admitting erasing couples} 

The following result provides sufficient conditions for the existence of erasing couples for any admissible buffer detail, 
\begin{proposition}
\label{prop:existerasing}
Let $\phi$ be a sub-additive policy. Then, any admissible buffer detail $(\mw,\mz)$ admits an erasing couple for $(\maB,\phi)$ in the following cases: 
\begin{enumerate}
\item $\maB$ satisfies Proposition \ref{pro:strongcouple} or Proposition \ref{pro:strongerasesep};
\item $\maB$ defines a BM model (i.e. $F=\maC\times\maS$);
\item $\maB$ defines a GM model and $(\mw,\mz)$ is of even length.  
\end{enumerate}
\end{proposition}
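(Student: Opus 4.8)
The plan is to exhibit in each case an input $(\mathbf c,\mathbf s)$ with $|\mathbf c|=|\mathbf s|$ that is completely matchable on its own and that also matches away all of $(\mathbf w,\mathbf z)$ when fed to the system after it, i.e.\ that satisfies (\ref{eq:deferase}) for every choice of preference lists. In Case~1 this input is a concatenation of copies of a strong erasing couple: Proposition \ref{pro:strongcouple} or Proposition \ref{pro:strongerasesep} supplies such a couple $(\mathbf a,\mathbf b)$ (with $|\mathbf a|=|\mathbf b|$), and we set $(\mathbf c,\mathbf s):=(\mathbf a^{k},\mathbf b^{k})$ with $k:=|\mathbf w|+|\mathbf z|$. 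Because $(\mathbf a,\mathbf b)$ is a full-length suffix of itself, (\ref{eq:defstrongcouple1}) gives $Q_\phi(\mathbf a,\mathbf b)=\emptystate$ for every preference list, so the sub-additivity of $\phi$ (Proposition \ref{prop:sub}), applied $k-1$ times, yields the first half of (\ref{eq:deferase}). For the second half, using that under any admissible policy matches once performed are never undone (so the matching of a concatenation restricted to a prefix is the matching of that prefix), appending one copy of $(\mathbf a,\mathbf b)$ amounts to applying the operation of Lemma \ref{lemma:erasing}, which strictly decreases $|Q_\phi(\cdot)|$ while it is positive and leaves it at $\emptystate$ afterwards; since $|Q_\phi(\mathbf w,\mathbf z)|=k$, $k$ copies suffice, and everything is insensitive to the preference lists because sub-additivity and the strong-erasing property are.

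\medskip
In Case~2, $F=\maC\times\maS$ and $\maG$ is connected; write $|\mathbf w|=|\mathbf z|=n$ and let $A=\mathrm{supp}(\mathbf w)$, $B=\mathrm{supp}(\mathbf z)$ (an independent set of $\maG$). Here I would build $(\mathbf c,\mathbf s)=(c_1\cdots c_n,\,s_1\cdots s_n)$ with every pair $(c_k,s_k)$ an edge of $E$, chosen so that when pair $k$ enters on top of $(\mathbf w,\mathbf z)$ exactly one still-waiting server class is compatible with $c_k$ and exactly one still-waiting customer class is compatible with $s_k$. These uniqueness conditions make both matches forced whatever $\phi$ and the preference lists are, so by the buffer-first rule $c_k$ and $s_k$ are not matched to each other, each pair removes two items, and the buffer is empty after $n$ pairs; fed to an empty system, on the other hand, each $(c_k,s_k)\in E$ meets (inductively) an empty buffer and is matched to itself, so $Q_\phi(\mathbf c,\mathbf s)=\emptystate$. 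The existence of such a choice relies on the connectedness of $\maG$ ($B$ has a neighbour outside $A$ and $A$ one outside $B$), possibly after ordering the rescues along a spanning tree of $\maG$ and inserting a bounded number of auxiliary $E$-pairs to restore the uniqueness property.

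\medskip
In Case~3 (GM model), $\maS=\tilde\maC$, $F=\{(c,\tilde c):c\in\maC\}$ and the reduced graph $(\maC,R)$ is connected (Lemma \ref{lemma:strongconnect}), and fresh inputs are sequences of twin pairs $(c_k,\tilde c_k)$. Following the construction of erasing words for GM models in \cite{MBM17}, I would walk along a spanning tree of $(\maC,R)$ and bring twin pairs matching the buffer items two by two, each step turning a pair of $R$-adjacent waiting classes into a completed match; the even-length hypothesis is precisely what forbids a single item from being left over (twin arrivals and matchings each remove an even number of items), and it makes the procedure terminate at $\emptystate$ both alone and on top of $(\mathbf w,\mathbf z)$.

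\medskip
The step I expect to be the main obstacle is the bookkeeping of Case~2 and its parity-flavoured analogue in Case~3: guaranteeing that the rescue input can be arranged so that every incoming class has a \emph{single} compatible waiting class at the instant it enters — which is exactly what makes the emptying robust to the matching policy and to the lists of preferences — is where connectedness of $\maG$ (resp.\ of $(\maC,R)$) has to be exploited with care.
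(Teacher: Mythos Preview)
Your Case~1 is essentially the paper's argument: concatenate strong erasing couples and use Lemma~\ref{lemma:erasing} (equivalently, sub-additivity peeling off the last letter pair) to drive the buffer to $\emptystate$; the only cosmetic difference is that you take $k=|\mathbf w|+|\mathbf z|$ copies while the paper takes at most $|\mathbf w|$. Your Case~3 likewise reduces to the construction in \cite{MBM17}, as the paper does.

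Case~2 is where you diverge, and you make it much harder than necessary. The paper does \emph{not} attempt a direct erasing input for the full buffer $(\mathbf w,\mathbf z)$. Instead it observes that for a BM model one can build an erasing couple for any \emph{single-letter} buffer $(i,j)\in(\maC\times\maS)\setminus E$: by connectedness of $\maG$ there is an alternating path $i\!-\!j_1\!-\!i_1\!-\!\cdots\!-\!i_p\!-\!j$, and since $F=\maC\times\maS$ the input $(\mathbf c,\mathbf s)=(i_1\cdots i_p,\,j_1\cdots j_p)$ is admissible; when fed on top of $(i,j)$ the buffer never holds more than one customer and one server at a time, so every match is forced regardless of $\phi$ and preferences, and one checks $Q_\phi(i\mathbf c,j\mathbf s)=\emptystate$ and $Q_\phi(\mathbf c,\mathbf s)=\emptystate$. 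Once single-letter buffers are handled, the extension to arbitrary $(\mathbf w,\mathbf z)$ is \emph{exactly} the sub-additive induction you already carried out in Case~1.

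Your direct construction instead demands that at every step the incoming $c_k$ (resp.\ $s_k$) see a \emph{unique} compatible class in the current multi-letter buffer. That uniqueness is not something connectedness of $\maG$ gives you for free --- a vertex may well have several neighbours present in the buffer --- and your suggested fix (``ordering rescues along a spanning tree and inserting auxiliary $E$-pairs'') is neither spelled out nor obviously feasible. You even flag this as the main obstacle. The obstacle disappears once you realise that the single-letter reduction (where uniqueness is automatic) plus the sub-additive bootstrap you already have is all that is needed.
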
  

\begin{proof} 
The argument in the proof are clearly independent of the lists of preferences, and we drop again this  parameter of all notations for short. 
\begin{enumerate}
\item Suppose that Proposition \ref{pro:strongcouple} is satisfied, and thereby a strong erasing couple $(\mc,\ms)$ exists for $(\maB,\phi)$. 
Therefore, as was observed above, $(\mc,\ms)$ is an erasing couple of any single-letter couple $(i,j)\in\maC\times\maS \setminus E$.  

Let us now consider an admissible buffer detail $(\mw,\mz)=\left(w_1...w_r\;,\;z_1...z_r\right)$ for $r\ge 1$. 
First, as we just proved, there exists an erasing couple, say $(\mc^1,\ms^1)$, for the single-letter word $(w_r,z_r)$. 
Thus, the sub-additivity of $\phi$ entails that 
\begin{align*}
\left|C_\phi\left(\mw\mc^1,\mz\ms^1\right)\right| &\le \left|C_\phi\left(w_1w_2...w_{r-1}\,,\,z_1z_2...z_{r-1}\right)\right|+\left|C_\phi\left(w_r\mc^1\,,\,z_r\ms^1\right)\right| 
= \left|C_\phi(\mw,\mz)\right|-1;\\
\left|S_\phi\left(\mw\mc^1,\mz\ms^1\right)\right| &\le \left|S_\phi\left(w_1w_2...w_{r-1}\,,\,z_1z_2...z_{r-1}\right)\right|+\left|S_\phi\left(w_r\mc^1\,,\,z_r\ms^1\right)\right| 
= \left|S_\phi(\mw,\mz)\right|-1.
\end{align*}
We can in turn, apply the same argument to the admissible buffer detail $Q_\phi\left(\mw\mc^1,\mz\ms^1\right)$: there exists an erasing couple $(\mc^2,\ms^2)$ for the single-letter 
couple gathering the last letter of $C_\phi\left(\mw\mc^1,\mz\ms^1\right)$ and the las letter of $S_\phi\left(\mw\mc^1,\mz\ms^1\right)$. As above, the sub-additivity of $\phi$ entails that 
\begin{align*}
\left|C_\phi\left(\mw\mc^1\mc^2,\mz\ms^1\ms^2\right)\right| &= \left|C_\phi\left(C_\phi\left(\mw\mc^1,\mz\ms^1\right)\mc^2\,,\,S_\phi\left(\mw\mc^1,\mz\ms^1\right)\ms^2\right)\right| 
\le \left|C_\phi\left(\mw\mc^1,\mz\ms^1\right)\right|-1;\\
\left|S_\phi\left(\mw\mc^1\mc^1,\mz\ms^1\ms^2\right)\right| &= \left|S_\phi\left(C_\phi\left(\mw\mc^1,\mz\ms^1\right)\mc^2\,,\,S_\phi\left(\mw\mc^1,\mz\ms^1\right)\ms^2\right)\right| 
\le \left|S_\phi\left(\mw\mc^1,\mz\ms^1\right)\right|-1.
\end{align*}
By an immediate induction, we obtain that for some $p \le r$, there exist $p$ couples $(\mc^1,\ms^1)$, ... , $(\mc^p,\ms^p)$ such that 
\begin{equation}
\label{eq:OK}
Q_\phi\left(\mw\mc^1...\mc^p\,,\,\mz\ms^1...\ms^p\right) = \left(C_\phi\left(\mw\mc^1...\mc^p\,,\,\mz\ms^1...\ms^p\right)\,,\,S_\phi\left(\mw\mc^1...\mc^p\,,\,\mz\ms^1...\ms^p\right)\right) 
=\emptyset.
\end{equation}
On the other hand, by the very definition of an erasing couple we have that $Q_\phi(\mc^1,\ms^1)=...=Q_\phi(\mc^p,\ms^p)=\emptyset.$ 
Thus $Q_\phi(\mc^1...\mc^p,\ms^1...\ms^p)=\emptyset$, which shows, together with (\ref{eq:OK}), that $(\mc^1...\mc^p,\ms^1...\ms^p)$ is an erasing couple for $(\mw,\mz)$.
\item As $\maG=(\maC\cup\maS,E)$ is connected, there exists an alternating path 
\[i \v j_1 \v i_1 \v j_2 \v \,...\, i_{p-1} \v j_p \v i_p \v j\]
connecting $i$ to $j$. Then, as $F=\maC\times\maS$, the input  
\[(\mc,\ms) = \left(i_1i_2 \,...\,i_k\,,\,j_1j_2 \,...\,j_k\right)\]
is admissible, and is clearly such that $Q_\phi(i\mc,j\ms)=\emptyset$ for any $\phi$ and any lists of preference, see Figure \ref{Fig:weakcouple}. 
\begin{figure}[h!]
\begin{center}
\begin{tikzpicture}[scale=1]
\fill (0,0) circle (2pt) node[above] {\scriptsize{$\mathbf{i}$}} ;
\fill (1,0) circle (2pt) node[above] {\scriptsize{$i_1$}} ;
\fill (2,0) node[above] {...} ;
\fill (3,0) node[above] {...} ;
\fill (4,0) circle (2pt) node[above] {\scriptsize{$i_{p-1}$}} ;
\fill (5,0) circle (2pt) node[above] {\scriptsize{$i_{p}$}} ;
\fill (0,-1) circle (2pt) node[below] {\scriptsize{$\mathbf{j}$}} ;
\fill (1,-1) circle (2pt) node[below] {\scriptsize{$j_1$}} ;
\fill (2,-1) circle (2pt) node[below] {\scriptsize{$j_2$}} ;
\fill (3,-1) node[below] {...} ;
\fill (4,-1) node[below] {...} ;
\fill (5,-1) circle (2pt) node[below] {\scriptsize{$j_{p}$}} ;
\draw[-] (0,0) -- (1,-1);
\draw[-] (0,-1)-- (5,0);
\draw[-] (1,0)-- (2,-1);
\draw[-] (4,0)-- (5,-1);
 %
 %
\fill (8,0) circle (2pt) node[above] {\scriptsize{$i_1$}} ;
\fill (9,0) circle (2pt) node[above] {\scriptsize{$i_2$}} ;
\fill (10,0) node[above] {...} ;
\fill (11,0) node[above] {...} ;
\fill (12,0) circle (2pt) node[above] {\scriptsize{$i_{p}$}} ;
  %
\fill (8,-1) circle (2pt) node[below] {\scriptsize{$j_1$}} ;
\fill (9,-1) circle (2pt) node[below] {\scriptsize{$j_2$}} ;
\fill (10,-1) node[below] {...} ;
\fill (11,-1) node[below] {...} ;
\fill (12,-1) circle (2pt) node[below] {\scriptsize{$j_{p}$}} ;
\draw[-] (8,-1)-- (8,0);
\draw[-] (9,0)-- (9,-1);
\draw[-] (12,0)-- (12,-1);
\end{tikzpicture}
\caption[smallcaption]{Perfect matchings of $(i\mc,j\ms)$ (left) and $(\mc,\ms)$ (right).} \label{Fig:weakcouple}
\end{center}
\end{figure}
The existence of an erasing word for any arbitrary admissible buffer detail $(\mw,\mz)$ follows as in case 1. 
\item In the case where $\maB$ defines a GM model, the existence of erasing couples for all buffer details of even size is 
proven in Proposition 6 of \cite{MBM17}, transposing to the present context, the concept of {\em erasing word} therein. 
\end{enumerate}
\end{proof}

\section{Loynes construction}
\label{sec:loynes}

\subsection{Probabilistic assumptions} 
\label{subsec:NcondScond}
Fix a bipartite matching structure $\maB=(\maC,\maS,E,F)$ and a matching policy $\phi$. We suppose that the classes of the entering couples and their lists of preferences are {random}: on the reference probability space $(\Omega,\mathcal F, \mathbb P)$, we define the $F\times \mathbb S\times \mathbb C$-valued bi-infinite input sequence $\left(C_n,S_n,\Sigma_n,\Gamma_n\right)_{n\in \Z}$, and make the following general assumption: 
\begin{enumerate}
\item[\textbf{(H1)}] The sequence $\left(C_n,S_n,\Sigma_n,\Gamma_n\right)_{n\in \Z}$ is stationary, drawn at all $n$ from a distribution on $F\times \mathbb S\times \mathbb C$ whose $F$-marginal is denoted $\mu$ and $\mathbb S\times \mathbb C$-marginal is denoted $\nu^\phi$, and ergodic. 
\end{enumerate}
In the above definition, we assume that the probability measure $\mu$ has full support $F$. We denote by $\mu_{\maC}$ (respectively, $\mu_{\maS}$) its $\maC$-marginal (resp., $\maS$-marginal) distribution. 
On another hand, we emphasize the dependence of the distribution of lists of preferences on $\mathbb S \times \mathbb C$ on the matching policy. For instance:  
\begin{itemize}
\item if $\phi$ is a strict priority, i.e. the order of priority of any customer/server among the class of compatible servers/customers is fixed beforehand. 
In other word we fix a list of customer preference $\alpha$ in $\mathbb S$ and a list of server preference $\beta$ in $\mathbb C$, and we set $\nu^\phi=\delta_\alpha\otimes \delta_\beta$. 
\item if $\nu^\phi$ is the uniform distribution on $\mathbb S\times\mathbb C$, we say that the matching policy is {\em uniform}; 
\item for $\phi=\textsc{ml}$ or {\sc ms}, we also fix $\nu^\phi$ as the uniform distribution on $\mathbb S\times\mathbb C$ - i.e. ties are broken uniformly at random; 
\item the drawn lists of preference are irrelevant for policies such as {\sc fcfs} or {\sc lcfs}, and in such case we drop this parameter from all notation.   
\end{itemize} 
We will also consider the following statistical assumption which, from Birkhoff's ergodic Theorem, is strictly stronger than (H1), 
\begin{enumerate}
\item[\textbf{(IID)}] The sequence $\left(C_n,S_n,\Sigma_n,\Gamma_n\right)_{n\in \Z}$ is i.i.d., drawn at all $n$ from a distribution on $F\times \mathbb S\times \mathbb C$ whose $F$-marginal is denoted $\mu$ and $\mathbb S\times \mathbb C$-marginal is denoted $\nu^\phi$. 
\end{enumerate} 
Assumption (IID), which allows a representation of the system by a discrete-time Markov chain, is typically made in all references on matching models except \cite{MBM17}. 
Observe that the BM model, as investigated in \cite{ABMW17}, makes, in addition to (IID), the assumption that the classes of the incoming server and customer are independent, i.e. 
$\mu=\mu_{\maC} \otimes \mu_{\maS}$. We will not restrict to this case here. 

\medskip 

Consider the two following conditions on $\maG$ and $\mu$,  
\begin{equation}
\label{eq:Ncond}
\text{For any set }A\subsetneq\maC\mbox{ and }B\subsetneq\maS,\quad\quad \mu_\maC(A) < \mu_{\maS}\left(\maS(A)\right)\quad\quad\mbox{ and }\quad\quad\mu_\maS(B) < \mu_{\maC}\left(\maC(B)\right),
\end{equation} 
and recalling (\ref{eq:defCSnot}),                     
\begin{equation}
\label{eq:Scond}
\text{For any independent set }I=A \cup B \ne \emptyset,\quad\mu_\maC\left(\maC(B)\right)+\mu_\maS(\maS(A)) >1-\mu\left(E \cap \left(\maC_\circ\left(I\right)\times \maS_\circ(I)\right)\right).
\end{equation}
The condition (\ref{eq:Ncond}) was introduced in \cite{calkapwei09}, and shown to guarantee complete resource pooling in a BM system under the policy {\sc fcfs}, and for $\mu=\mu_{\maC}\otimes \mu_{\maS}.$ 
It was shown in Lemma 3.2 in \cite{BGM13} to be necessary for the positive recurrence of any EBM system under the (IID) assumption. 
Condition (\ref{eq:Scond}) was also introduced in \cite{BGM13}, and shown to be sufficient for the positive recurrence of an EBM system under (IID) (Proposition 5.2 of \cite{BGM13}). 

\medskip

Let us now introduce the following condition for a bi-separable matching graph $\maG=(\maC \cup \maS, E)$ of order $p$, denoting by  
$I_1=(A_1 \cup B_1),...,I_p=(A_p \cup B_p)$ the independent sets of the corresponding partition of $\maC \cup \maS$, 
\begin{equation}
\label{eq:scondmonotone0}
\mbox{For all }i\in\llbracket 1,p \rrbracket,\, \mu\left(A_i\times B_i\right) < {1 \over 2}.
\end{equation}
Observe that (\ref{eq:scondmonotone}) is non-empty if and only $\maG$ is of order strictly greater than 2. 
We have the following result, 
\begin{proposition}
\label{pro:equivalenceBGM}
For a bi-separable graph $\maG=(\maC \cup \maS, E)$ of order $p$, both conditions (\ref{eq:Ncond}) and (\ref{eq:Scond}) are equivalent to (\ref{eq:scondmonotone0}).
\end{proposition}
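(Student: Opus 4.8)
The plan is to use Lemma~\ref{lemma:separable} to make the neighbourhood structure of a bi-separable graph completely explicit, so that both (\ref{eq:Ncond}) and (\ref{eq:Scond}) collapse onto the same finite system of scalar inequalities --- one per block of the partition --- which one then recognises as (\ref{eq:scondmonotone0}).

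First I would fix the partition $I_1=(A_1\cup B_1),\dots,I_p=(A_p\cup B_p)$. Since $\overline{\maG}$ has the sets $I_k$ as its connected components, every cross-block pair $(c,s)\in A_i\times B_j$ with $i\ne j$ is an edge of $\maG$, and Lemma~\ref{lemma:separable} gives $\maS(c)=\maS\setminus B_i$ for $c\in A_i$ and $\maC(s)=\maC\setminus A_i$ for $s\in B_i$, for every maximal block. Two elementary facts follow. First, the nonempty independent sets of $\maG$ are exactly the sets $A\cup B$ with $\emptyset\ne A\subseteq A_i$ and $\emptyset\ne B\subseteq B_i$ for a single (necessarily maximal) index $i$: a pair of vertices lying in two different blocks would otherwise produce an edge inside $A\times B$. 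Second, in (\ref{eq:Ncond}), if $A$ (resp.\ $B$) meets two distinct blocks then $\maS(A)=\maS$ (resp.\ $\maC(B)=\maC$) and the corresponding inequality holds automatically; so only the one-block choices $\emptyset\ne A\subseteq A_i$ and $\emptyset\ne B\subseteq B_i$ are constraining, and non-maximal blocks contribute nothing.

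Evaluating the two conditions: (\ref{eq:Ncond}) on $\emptyset\ne A\subseteq A_i$ reads $\mu_\maC(A)<1-\mu_\maS(B_i)$, whose extremal instance $A=A_i$ (and, symmetrically, $B=B_i$) is $\mu_\maC(A_i)+\mu_\maS(B_i)<1$; hence (\ref{eq:Ncond}) is equivalent to this family of $p$ inequalities. For (\ref{eq:Scond}) on an independent set $I=(A,B)$ with $A\subseteq A_i$, $B\subseteq B_i$, one computes $\maC_\circ(I)=A_i\setminus A$ and $\maS_\circ(I)=B_i\setminus B$, so that $\maC_\circ(I)\times\maS_\circ(I)\subseteq A_i\times B_i$ carries no edge and the correction term $\mu\bigl(E\cap(\maC_\circ(I)\times\maS_\circ(I))\bigr)$ vanishes; (\ref{eq:Scond}) then becomes $\mu_\maC(\maC\setminus A)+\mu_\maS(\maS\setminus B)>1$, whose binding case $A=A_i$, $B=B_i$ is again $\mu_\maC(A_i)+\mu_\maS(B_i)<1$. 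So (\ref{eq:Ncond}) and (\ref{eq:Scond}) are equivalent, both expressing that $\mu_\maC(A_i)+\mu_\maS(B_i)<1$ for every block $i$. (The implication (\ref{eq:Scond}) $\Rightarrow$ (\ref{eq:Ncond}) may alternatively be quoted from \cite{BGM13}, where (\ref{eq:Scond}) is sufficient and (\ref{eq:Ncond}) necessary for positive recurrence.)

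It remains to identify this per-block system with (\ref{eq:scondmonotone0}). One direction is a soft estimate: $\mu(A_i\times B_i)\le\min\bigl(\mu_\maC(A_i),\mu_\maS(B_i)\bigr)\le\tfrac12\bigl(\mu_\maC(A_i)+\mu_\maS(B_i)\bigr)$, so $\mu_\maC(A_i)+\mu_\maS(B_i)<1$ forces $\mu(A_i\times B_i)<\tfrac12$. The converse is the step I expect to be the real obstacle: one must promote this inequality to the equality $\mu_\maC(A_i)=\mu_\maS(B_i)=\mu(A_i\times B_i)$ --- equivalently, that an incoming pair never straddles two blocks, its customer and its server both lying in one $I_j$ --- after which $\mu_\maC(A_i)+\mu_\maS(B_i)=2\mu(A_i\times B_i)$ and (\ref{eq:scondmonotone0}) is literally the per-block inequality. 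Pinning down cleanly why, in the regime of the proposition, the arrival measure respects the partition cut out by $\overline{\maG}$ is the delicate point --- without it the three conditions do not coincide --- and it is there that the bi-separability hypothesis has to be used in full.
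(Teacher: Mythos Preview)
Your reduction of both (\ref{eq:Ncond}) and (\ref{eq:Scond}) to the per-block system $\mu_\maC(A_i)+\mu_\maS(B_i)<1$ is correct, and this is exactly what the paper's proof establishes as well --- the paper routes through the auxiliary condition (\ref{eq:scondmonotone}), $\mu(A_i\times B_i)<\mu\bigl(\maC(B_i)\times\maS(A_i)\bigr)$, and the short chain of equivalences in the paper's display (\ref{eq:equivalence1}) shows that (\ref{eq:scondmonotone}) is precisely your inequality $\mu_\maC(\maC(B_i))+\mu_\maS(\maS(A_i))>1$. So on the equivalence (\ref{eq:Ncond}) $\Leftrightarrow$ (\ref{eq:Scond}) $\Leftrightarrow$ (\ref{eq:scondmonotone}) you and the paper agree, via slightly different bookkeeping.

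The obstacle you isolate in your last paragraph is genuine, and it is not something you are failing to see: it cannot be overcome from bi-separability alone. Bi-separability is a hypothesis on the matching graph $\maG$ only; it places no constraint on the arrival graph $\maH$ or on the support of $\mu$, so there is no mechanism forcing $\mu_\maC(A_i)=\mu(A_i\times B_i)$. Concretely, take $p=3$ with $A_i=\{i\}$, $B_i=\{\bar i\}$ and $E=\{(i,\bar j):i\ne j\}$, and let $\mu$ put mass $0.4$ on $(1,\bar 1)$, $0.25$ on each of $(1,\bar 2)$ and $(2,\bar 1)$, and $0.1$ on $(3,\bar 3)$. Then $\mu(A_i\times B_i)<\tfrac12$ for every $i$, so (\ref{eq:scondmonotone0}) holds; but $\mu_\maC(\{1\})=0.65>0.35=\mu_\maS(\maS(\{1\}))$, so (\ref{eq:Ncond}) fails. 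The paper asserts that ``(\ref{eq:scondmonotone0}) is equivalent to (\ref{eq:scondmonotone})'' as ``an immediate consequence of Lemma~\ref{lemma:separable}'', but Lemma~\ref{lemma:separable} speaks only of neighbourhoods in $\maG$ and says nothing about $\mu$; this step in the paper is a gap, not a missing idea on your side. What is actually proved --- by you and by the paper --- is the three-way equivalence with (\ref{eq:scondmonotone}) (equivalently $\mu_\maC(A_i)+\mu_\maS(B_i)<1$), and your soft estimate gives the one-sided implication to (\ref{eq:scondmonotone0}). The converse would need the extra hypothesis $F\subseteq\bigcup_i A_i\times B_i$.
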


\begin{proof}
Denote again $I_1=A_i \cup B_i,...,I_p=A_p\cup B_p$ the independent sets of the partition of $\maC \cup \maS$. It is an immediate consequence of Lemma \ref{lemma:separable} that 
(\ref{eq:scondmonotone0}) is equivalent to  
\begin{equation}
\label{eq:scondmonotone}
\mbox{For all }i\in\llbracket 1,p \rrbracket,\, \mu\left(A_i\times B_i\right) < \mu\left(\maC(B_i)\times \maS(A_i)\right).
\end{equation}
Thus, as (\ref{eq:Scond}) entails (\ref{eq:Ncond}), it suffices to check that 
(\ref{eq:Ncond}) entails (\ref{eq:scondmonotone}) and (\ref{eq:scondmonotone}) entails (\ref{eq:Scond}).

\medskip

\noindent $\underline{(\ref{eq:Ncond}) \,\Rightarrow\, (\ref{eq:scondmonotone})}$:  
Suppose that $A_i \ne \emptyset$ and $B_i \ne \emptyset$ (otherwise the result is trivial). We deduce from (\ref{eq:Ncond}) that
\begin{align*}
\mu\Bigl(A_i\times B_i\Bigl) =\mu_\maC\left(A_i\right)-\mu\Bigl(A_i\times \maS\left(A_i\right)\Bigl)
                                &<\mu_\maS\left(\maS\left(A_i\right)\right)-\mu\Bigl(A_i\times \maS\left(A_i\right)\Bigl)\\
                                &=\mu\Bigl(\maC\left(B_i\right)\times \maS\left(A_i\right) \Bigl).
\end{align*}

\noindent $\underline{(\ref{eq:scondmonotone})\,\Rightarrow\,(\ref{eq:Scond})}$: let $I= A\cup B$ be a non-empty independent set of $\maG$.
\begin{itemize}
\item[(i)] If $I=I_i$ for some $i\in\llbracket 1,p \rrbracket$,
\begin{align}
\mu\Bigl(\maC\left(B_i\right)\times \maS\left(A_i\right) \Bigl) > \mu\left(A_i\times B_i\right)
&\Longleftrightarrow \mu_\maC\left(\maC(B_i)\right)-\mu\Bigl(\maC(B_i)\times B_i\Bigl)> \mu\left(A_i\times B_i\right)\nonumber\\
&\Longleftrightarrow \mu_\maC\left(\maC(B_i)\right)> \mu_\maS\left(B_i\right)\nonumber\\
&\Longleftrightarrow \mu_\maC\left(\maC(B_i)\right)> 1-\mu_\maS\left(\maS\left(A_i\right)\right).\label{eq:equivalence1}
\end{align}
\item[(ii)] If not, we show that
\begin{equation}
\label{eq:equivalence2}
I \subseteq I_i\mbox{ for some }i\in\llbracket 1,p \rrbracket.
\end{equation}
For this, fix $c,c' \in A$, and suppose that $k\in A_i$ and $k'\in A_{i'}$. 
Assume that $B \not\subseteq B_i$. Then, there exists a class of servers $s$ satisfying
\begin{equation*}
s \in B\subset \maS\left(A_{i'}\right)=\maS\left(\{c'\}\right),
\end{equation*}
from Lemma \ref{lemma:separable}. This implies that $(c',s)\in E$, so $I$ is not an independent set, an absurdity. Hence $B \subseteq B_i$ and similarly,
$B\subseteq B_{i'}$. As the latter sets are disjoints, this is true if and only if $i=i'$, which implies in turn that $\{c,c'\}\subseteq A_i$ for all couples $\{c,c'\}$ of elements
of $A$, so $A \subseteq A_i$, which concludes the proof of (\ref{eq:equivalence2}).

Finally, in view of Lemma \ref{lemma:separable}, (\ref{eq:equivalence2}) together with (\ref{eq:equivalence1}) applied to $I_i$ imply that
\begin{align*}
\mu_\maC\left(\maC(A)\right)+\mu_\maS\left(\maS(B)\right)
&=\mu_\maC\left(\maC(A_i)\right)+\mu_\maS\left(\maS(B_i)\right)\\
&>1 \ge 1- \mu\Bigl(E \cap \left(\maC_\circ\left(I\right)\times \maS_\circ\left(I\right)\right)\Bigl),
\end{align*}
which concludes the proof. 
\end{itemize}
\end{proof}

\subsection{Stationary ergodic framework}
\label{subsec:statergo}
Our coupling results will be more easily formulated in the ergodic theoretical framework. For this, we work on the canonical space $\Omega^0:=\left(F\times \mathbb S\times \mathbb C\right)^\mathbb Z$ of the input, 
on which we define the bijective shift operator $\theta$ by $\theta\left((\omega_n)_{n\in\mathbb Z}\right)= (\omega_{n+1})_{n\in\mathbb Z}$ for all $(\omega_n)_{n\in \mathbb Z} \in \Omega$. 
We denote by $\theta^{-1}$ the reciprocal operator of $\theta$, and by $\theta^n$ and $\theta^{-n}$ the $n$-th iterated of 
$\theta$ and $\theta^{-1}$, respectively, for all $n\in\N$. We equip $\Omega^0$ with a sigma-field $\mathscr F^0$ and with the image probability measure 
$\bp$ of the sequence $\left(C_n,S_n,\Sigma_n,\Gamma_n\right)_{n\in \Z}$ on $\Omega^0$. Observe that, under assumption (H1), $\bp$ is {compatible} with the shift, i.e. for any $\maA \in \mathscr F^0$, $\bpr{\maA}=\bpr{\theta^{-1}\maA}$ and any $\theta$-invariant event is either $\bp$-negligible or almost sure. 
Then the quadruple $\mathscr Q:=\left(\Omega^0,\mathscr F^0,\bp,\theta\right)$, termed {\em Palm space} of the input, is stationary ergodic.  
For more details about this framework, we refer the reader to the monographs \cite{BranFranLis90}, \cite{BacBre02} (Sections 2.1 and 2.5) and \cite{Rob03} (Chapter 7). 

Let the random variable (r.v. for short) $(\maC,\maS,\Sigma,\Gamma)$ be the projection of sample paths over their 0-coordinate. Thus $(\maC,\maS,\Sigma,\Gamma)$ can be interpreted as the input brought to the system at time 0, that is, 
at 0 a couple $(\maC,\maS)$ enters the systems, in which  $C$ has a list of preference $\Sigma$ over $\maS$ and $S$ has a list of preference $\Gamma$ over $\maC$. 
Then for any $n\in \mathbb Z$, the r.v. $(\maC,\maS,\Sigma,\Gamma)\circ\theta^n$ corresponds to the input brought to the system at time $n$.   
In what follows, for any $\CE$-valued r.v. $V$, we let $\suite{U^{[V]}_n}$ be the $\CE$-valued buffer detail sequence of the model, whenever the buffer detail at time 0 equals $V$.  
From (\ref{eq:defodot}), for any fixed bipartite matching structure and any fixed matching policy $\phi$, the sequence $\suite{U^{[V]}_n}$ is stochastic recursive, in that it obeys the recurrence relation 
\begin{equation}
\label{eq:recur}
\left\{\begin{array}{ll}
U^{[V]}_0 &= V\\
U^{[V]}_{n+1} &=  U^{[V]}_{n}\odot_{\phi} (\maC,\maS,\Sigma,\Gamma)\circ\theta^n,\,n\in\N
\end{array}\right.\quad ,\,\bp-\mbox{ a.s..}
\end{equation}

It follows from the stationarity of $\mathscr Q$ that a stationary buffer detail, if any, is a $\CE$-valued sequence $\suite{U_n}$ that is such that  
$U_n=U\circ\theta^n$, $\bp$-a.s. for all $n\in \N$, where $U$ is a $\CE$-valued r.v.. In turn, with (\ref{eq:recur}) this amounts to saying that $U$ is solution to the equation 
\begin{equation}
\label{eq:recurstat}
U\circ\theta = U\odot_{\phi} (\maC,\maS,\Sigma,\Gamma),\,\bp-\mbox{ a.s..}
\end{equation}
In other words, finding a stationary version of the buffer detail sequence amounts to solving the almost sure equation (\ref{eq:recurstat}). 
Further, such a solution corresponds uniquely to a stationary distribution of the buffer detail on the original probability space (see again the aforementioned references for details). 

By applying the very argument of the proof of Lemma 3.2 in \cite{BGM13} and Birkhoff's Theorem (instead of the SLLN), it is immediate to observe that (\ref{eq:Ncond}) is also necessary for the stability of the system 
under (H1). Specifically, there clear cannot exist a proper solution $U$ to (\ref{eq:recurstat}) such that $\bpr{U=\emptyset}>0$, unless (\ref{eq:Ncond}) holds.  

\medskip

To derive sufficient stability conditions and explicitly construct the equilibrium state of the system, the {\em backwards scheme} {\em \`a la Loynes} associated to the present recursion, is defined as follows: 
for any $n\in \N$, consider the r.v. $U^{[\emptyset]}_n \circ\theta^{-n}$, which can be interpreted as the buffer detail at time 0, 
starting from an empty system at time $-n$. The typical setting of Loynes's Theorem (see \cite{Loynes62}) is the case when the random map $x \mapsto x \odot_{\phi} (\maC,\maS,\Sigma,\Gamma)$ 
is almost surely continuous and monotonic, for a given metric and a partial ordering on $\CE$ such that $\CE$ admits a minimal point and all monotonic sequences 
of $\CE$ converge. Then an explicit solution of (\ref{eq:recurstat}) is obtained by taking the almost sure limit of the sequence $\suite{U^{[\emptyset]}_n \circ\theta^{-n}}$ - 
and the coupling of any sequence $\suite{U^{[V]}_n}$ to the stationary version, follows easily. 

In the present case, the recursion does not put in evidence any particular monotonicity property. 
However, we can use the sub-additivity of the matching policy under consideration, to obtain a coupling result in the strong backwards sense. For this we use Borovkov's theory of renovating events.  
Following \cite{Bor84}, we say that the buffer detail sequence $\suite{U^{[V]}_n}$ converges with {\em strong backwards coupling} to the stationary buffer detail sequence 
$\suite{U\circ\theta^n}$ if, $\bp$-almost surely, there exists $N^*\ge 0$ such that for all $n \ge N^*$, $U^{[V]}_n\circ\theta^{-n}=U$. 
Note that strong backwards coupling implies the forward coupling between $\suite{U^{[V]}_n}$ and $\suite{U\circ\theta^n}$, that is, there exists $\bp$-a.s. an integer $N\ge 0$ such that $U^{[V]}_{n}=U\circ\theta^n$ 
for all $n \ge N$. In particular the distribution of $U^{[V]}_{n}$ converges in total variation to that of $U$; see e.g. Section 2.4 of \cite{BacBre02} for details. 


\subsection{Renovating events}
\label{subsec:renove} 
In this Section we adapt the coupling results in section 4.4 of \cite{MBM17} to the case of EBM models (instead of GM models). 
For this, we define the following family of events for any $\CE_0$-valued r.v. $V$,  
\begin{equation*}
\masA_l(V) =\left\{U^{[V]}_{l}=\emptyset\right\},\,l\in\N.  
\end{equation*}
For any $k\in\N$ and any $l > -k$, the event 
\[\theta^k \masA_{l+k}(V)= \left\{U^{[V]}_{l+k}\circ\theta^{-k}=\emptyset\right\}\]
has the following interpretation: a model started in state $V$ at time $-k$ is empty at time $l$. 
Thus if we denote $V=(W,Z)$ we have that 
\[\theta^k \masA_{l+k}\left((W,Z)\right) = \left\{Q_\phi\Bigl(WC\circ\theta^{-k}... \,\,C\circ\theta^{l-1}\,,\,ZS\circ\theta^{-k}...\,\,S\circ\theta^{l-1} \Bigl)=\emptyset\right\}.\]
Clearly, $\suite{\masA_n(V)}$ form a sequence of renovating events of length 1 for the recursion $\suite{U^{[V]}_n}$, for any such initial condition $V$ (see \cite{Foss92,Foss94}).  
Thus the following result is a consequence e.g. of Theorem 2.5.3 of \cite{BacBre02}, and is proven similarly to Proposition 8 in 
\cite{MBM17}, 

\begin{proposition}
\label{pro:renov1}
Let $G=(\maV,\maE)$ be a matching graph, $\phi$ be an admissible policy and 
$V$ be a $\CE$-valued random variable. Suppose that assumption (H1) holds, and that    
\begin{equation}
\label{eq:renov0}
\lim_{n\to\infty} \bpr{\bigcap_{k=0}^{\infty} \bigcup_{l=0}^n \masA_{l}(V) \cap \theta^k\masA_{l+k}(V)}=1. 
\end{equation}
Then, there exists a stationary buffer detail sequence $\suite{U\circ\theta^n}$, to which  
$\suite{U^{[V]}_{n}}$ converges with strong backwards coupling. Moreover we have $\bpr{U=\emptyset}>0$. 
\end{proposition}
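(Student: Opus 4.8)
The plan is to realize Proposition~\ref{pro:renov1} as an instance of Borovkov's theory of renovating events (Theorem~2.5.3 of \cite{BacBre02}), along the lines of Proposition~8 of \cite{MBM17}, the renovating event at time $n$ being $\masA_n(V)$. First I would check that $\suite{\masA_n(V)}$ is a sequence of renovating events of length $1$ for the recursion (\ref{eq:recur}): on $\masA_n(V)$ one has $U^{[V]}_n=\emptyset$, whence
\[
U^{[V]}_{n+1}=\emptyset\odot_\phi\bigl((\maC,\maS,\Sigma,\Gamma)\circ\theta^n\bigr)=:g\bigl((\maC,\maS,\Sigma,\Gamma)\circ\theta^n\bigr),
\]
for the fixed measurable map $g:=\emptyset\odot_\phi(\cdot\,)$, irrespective of $V$. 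Consequently, as soon as the $V$-started chain hits $\emptyset$ it forgets its initial condition and coincides thereafter with the chain started from $\emptyset$ at that instant, since two trajectories of (\ref{eq:recur}) driven by the same input and agreeing at one time agree at all later times ($\odot_\phi$ being deterministic).

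Next I would run the Loynes-type backward scheme. Write $Y_m:=U^{[V]}_m\circ\theta^{-m}$, the buffer detail at time $0$ when the system starts from $V$ at time $-m$, and let $E_n$ denote the event appearing inside the probability in (\ref{eq:renov0}). By $\theta$-invariance of $\bp$ (assumption (H1)), $\bpr{\theta^nE_n}=\bpr{E_n}$. The key observation is that on $\theta^nE_n$, for every $k\ge0$ there is an index $l\in\{0,\dots,n\}$ for which both chains started from $V$ at times $-n$ and $-(n+k)$ are empty at the common time $l-n\le0$; by the renovation property they then coincide from time $l-n$ onwards, so $Y_n=Y_{n+k}$ on $\theta^nE_n$ for all $k\ge0$, i.e. $Y$ is constant from index $n$ on there. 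Since $\bpr{\theta^nE_n}\to1$, the reverse Fatou lemma gives $\bpr{\limsup_n\theta^nE_n}=1$, and hence, $\bp$-a.s., $Y_m$ is eventually equal to some $\CE$-valued r.v.\ $U$, with $N^*:=\inf\{m:\,Y_{m'}=U\text{ for all }m'\ge m\}<\infty$ a.s. This is precisely the announced strong backward coupling of $\suite{U^{[V]}_n}$ to $\suite{U\circ\theta^n}$.

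I would then identify $U$ as a stationary buffer detail. Composing (\ref{eq:recur}) with $\theta^{-(m+1)}$ yields $Y_{m+1}=(Y_m\circ\theta^{-1})\odot_\phi\bigl((\maC,\maS,\Sigma,\Gamma)\circ\theta^{-1}\bigr)$ for all $m$; letting $m\to\infty$ (both sides being eventually constant, and $Y_m\to U$ a.s.\ implying $Y_m\circ\theta^{-1}\to U\circ\theta^{-1}$ a.s.) we get $U=(U\circ\theta^{-1})\odot_\phi\bigl((\maC,\maS,\Sigma,\Gamma)\circ\theta^{-1}\bigr)$, that is $U\circ\theta=U\odot_\phi(\maC,\maS,\Sigma,\Gamma)$ $\bp$-a.s., so $\suite{U\circ\theta^n}$ solves (\ref{eq:recurstat}).

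Finally I would prove $\bpr{U=\emptyset}>0$, which does not come from the renovation argument alone and is the delicate point. Here one exploits the factor $\theta^k\masA_{l+k}(V)$ in (\ref{eq:renov0}) together with $N^*<\infty$ a.s.: on $\theta^nE_n$, for every $k\ge0$ there is $l(k)\in\{0,\dots,n\}$ such that the chain started from $V$ at time $-(n+k)$ is empty at time $l(k)-n$; choosing $k$ larger than $\max_{-n\le j\le0}N^*\circ\theta^j$ (a.s.\ finite) and invoking the strong backward coupling, that chain observed at time $l(k)-n$ equals $U\circ\theta^{l(k)-n}$, whence $U\circ\theta^{l(k)-n}=\emptyset$ with $l(k)-n\in\{-n,\dots,0\}$. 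Thus $\theta^nE_n$, up to a $\bp$-null set, is contained in $\bigcup_{i=0}^n\theta^i\{U=\emptyset\}$, so $\bpr{\bigcup_{i=0}^n\theta^i\{U=\emptyset\}}\ge\bpr{E_n}\to1$; letting $n\to\infty$ and using countable sub-additivity together with $\theta$-invariance, $1\le\sum_{i\ge0}\bpr{\theta^i\{U=\emptyset\}}=\sum_{i\ge0}\bpr{U=\emptyset}$, which forces $\bpr{U=\emptyset}>0$. The main obstacle throughout is the bookkeeping with the shift and the two coupled copies of the recursion — making sure the relevant emptiness times live on a common time axis — together with this last step, where the strong backward coupling must be fed back into (\ref{eq:renov0}) to locate an emptiness time of the stationary solution.
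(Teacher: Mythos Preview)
Your proposal is correct and follows precisely the route the paper indicates: the paper's own proof is a two-line reference to Theorem~2.5.3 of \cite{BacBre02} together with the observation that $\suite{\masA_n(V)}$ are renovating events of length~1, stating that the argument is the same as for Proposition~8 in \cite{MBM17}. You have faithfully unpacked that argument, including the last step showing $\bpr{U=\emptyset}>0$, which indeed requires feeding the strong backward coupling back into the structure of the events in (\ref{eq:renov0}); your bookkeeping with the shift and the two backward copies is accurate.
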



Define the following sets of random variables: 
\begin{equation*}
\mathscr V^r=\Bigl\{\CE_0-\mbox{ valued r.v. $(W,Z)$:\, $|W|=|Z| \le r$ a.s.}\Bigl\},\,r\in \N_+,
\end{equation*}
and let 
\[\mathscr V^{\infty}:= \bigcup_{r=1}^{+\infty} \mathscr V^r.\]
Define the following event for any admissible input $(\mc,\ms) \in \maC^*\times \maS^*$ of length $m$, 
\begin{align}
\mathscr B\left(\mc\,,\,\ms\right) 
&=\left\{\left(C\,C\circ\theta \,... \,C\circ\theta^{m-1}\;,\;S\,S\circ\theta \,... \,S \circ\theta^{m-1}\right)=\left(\mc\,,\,\ms\right)\right\}.\label{eq:defB}
\end{align}

We have the following analog to Theorem 3 in \cite{MBM17}, 
\begin{theorem}
\label{thm:main}
Let $\phi$ be a sub-additive policy and $\mathcal B$ be a bipartite matching structure such that $(\mathcal B,\phi)$ admit at least one 
strong erasing couple. Suppose that for some $r \in \mathbb N_+$. 
\begin{equation}
\label{eq:renov1}
\lim_{n\to\infty} \bpr{\bigcap_{k=0}^{\infty} \bigcup_{l=0}^n \masA_l(\emptyset) \cap \theta^k\masA_{l+k}(\emptyset) \cap \theta^{-l}\mathscr B\left(\mc^1...\mc^r\,,\,\ms^1\,...\,\ms^r\right)}=1, 
\end{equation}
where $(\mc^1,\mc^1),\,...,\,(\mc^r,\mc^r)$ are $r$ (possibly identical) strong erasing couples for $\mathcal B$ and $\phi$.  
Then, there exists a solution $U^r$ to (\ref{eq:recurstat}) in $\mathscr V^{\infty}$ such that $\bpr{U^r=\emptyset}>0$, and  
to which all sequences $\suite{U^{[V]}_{n}}$, for $V\in \mathscr V^{r}$, converge with strong backwards coupling. 

If the above is true for any $r \in \N_+$, then there exists a solution $U^*$ to (\ref{eq:recurstat}) such that $\bpr{U^*=\emptyset}>0$, 
and to which all sequences $\suite{U^{[V]}_{n}}$, for $V\in \mathscr V^{\infty}$, converge with strong backwards coupling.
\end{theorem}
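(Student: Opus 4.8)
The plan is to deduce Theorem~\ref{thm:main} from Proposition~\ref{pro:renov1} applied with the empty initial condition, and then to propagate the coupling to general initial conditions in $\mathscr V^\infty$ using the strong erasing couples together with sub-additivity. First I would show the existence of the solution $U^r$ when $V=\emptyset$. Observe that the event inside the limit in \eqref{eq:renov1} is contained in the event inside \eqref{eq:renov0} with $V=\emptyset$, so hypothesis \eqref{eq:renov1} immediately implies \eqref{eq:renov0}; Proposition~\ref{pro:renov1} then yields a stationary buffer-detail sequence $\suite{U^r\circ\theta^n}$ with $\bpr{U^r=\emptyset}>0$, to which $\suite{U^{[\emptyset]}_n}$ converges with strong backwards coupling. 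It remains to check $U^r\in\mathscr V^\infty$: on the event $\theta^{-l}\mathscr B(\mc^1\ldots\mc^r,\ms^1\ldots\ms^r)$ occurring at the (a.s.\ finite) backwards coupling time, the buffer detail at time $0$ is obtained by feeding the admissible input $(\mc^1\ldots\mc^r,\ms^1\ldots\ms^r)$ into an empty system, so Lemma~\ref{lemma:erasing} applied $r$ times (starting from $\emptyset$, which has $|Q_\phi|=0$) — or rather the observation that $|Q_\phi(\mc^1\ldots\mc^r,\ms^1\ldots\ms^r)|\le 2r$ via sub-additivity and \eqref{eq:defstrongcouple1} — bounds its size, hence $U^r\in\mathscr V^{2r}\subset\mathscr V^\infty$.

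Next I would handle a general $V=(W,Z)\in\mathscr V^r$. The key point is that, on the renovation event in \eqref{eq:renov1}, between the emptying time and time $0$ the input contains the concatenation of $r$ strong erasing couples; by Lemma~\ref{lemma:erasing}, each such couple strictly decreases $|Q_\phi(\cdot)|$ of whatever buffer detail precedes it, so starting from any state $(W,Z)$ with $|W|=|Z|\le r$ the buffer is completely emptied after at most $r$ of them. Concretely, I would argue that on the event $\bigcup_{l=0}^n \masA_l(\emptyset)\cap\theta^k\masA_{l+k}(\emptyset)\cap\theta^{-l}\mathscr B(\mc^1\ldots\mc^r,\ms^1\ldots\ms^r)$, for the relevant $k$ the system started from $V$ at time $-k$ and the system started empty at time $-k$ are both empty at some common time, and thereafter they coincide — this is exactly the renovation/merging argument of Theorem~2.5.3 of \cite{BacBre02}, with the erasing couples supplying the renovating events of (random but bounded) length for the recursion started at $V$. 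Hence $\suite{U^{[V]}_n}$ couples in the strong backwards sense to the same $\suite{U^r\circ\theta^n}$.

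For the final assertion, suppose \eqref{eq:renov1} holds for every $r\in\N_+$. Then for each $r$ we get a solution $U^r$ of \eqref{eq:recurstat} attracting all of $\mathscr V^r$. Since $\mathscr V^1\subset\mathscr V^2\subset\cdots$, and any two such solutions both attract, in particular, the empty initial condition with strong backwards coupling, they must coincide $\bp$-a.s.: indeed $U^{r}=\lim_n U^{[\emptyset]}_n\circ\theta^{-n}=U^{r'}$ on a full-measure set (the backwards coupling limits agree wherever both coupling times are finite, which is a.s.). Write $U^*$ for this common random variable; it solves \eqref{eq:recurstat}, satisfies $\bpr{U^*=\emptyset}>0$, and for any $V\in\mathscr V^\infty$ we have $V\in\mathscr V^r$ for some $r$, so $\suite{U^{[V]}_n}$ converges to $\suite{U^*\circ\theta^n}$ with strong backwards coupling by the previous paragraph.

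The main obstacle I anticipate is the second step: making rigorous that the presence of $r$ strong erasing couples in the input between the common emptying time and time $0$ forces the $V$-started and $\emptyset$-started trajectories to merge, uniformly over all $V\in\mathscr V^r$. One has to be careful that after the system started from $V$ reaches $\emptyset$ it stays coupled to the empty-started system forever (this uses that both obey the same recursion \eqref{eq:recur} driven by the same shifted input, so once they agree at one time they agree at all later times), and that the erasing couples do their job regardless of the lists of preferences and of the intervening arrivals — which is precisely what Definition of strong erasing couple and Lemma~\ref{lemma:erasing} guarantee. The bookkeeping of indices $k,l$ in \eqref{eq:renov1} and the translation into a genuine family of renovating events of bounded length, so that Theorem~2.5.3 of \cite{BacBre02} applies verbatim, is the delicate part; everything else is a direct transcription of the argument for Theorem~3 in \cite{MBM17}.
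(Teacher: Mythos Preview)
Your overall strategy is the paper's: both reduce to showing that $\bigl\{\masA_n(\emptyset)\cap\theta^{-n}\mathscr B(\mc^1\ldots\mc^r,\ms^1\ldots\ms^r)\bigr\}_n$ is a sequence of renovating events of length $m=\sum_i|\mc^i|$ for \emph{every} recursion $\suite{U^{[V]}_n}$ with $V\in\mathscr V^r$, and then invoke Borovkov--Foss (Theorem~2.5.3 in \cite{BacBre02}). The paper does this in one stroke by pointing to Lemma~5 and Proposition~9 of \cite{MBM17}; you split into ``$V=\emptyset$ first, then general $V$'', which is legitimate but slightly longer.

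There is, however, a real gap in your second step. You write that ``starting from any state $(W,Z)$ with $|W|=|Z|\le r$ the buffer is completely emptied after at most $r$'' strong erasing couples. But the $V$-started trajectory at the moment the erasing block begins (time $l$) need \emph{not} have size $\le r$: between the starting time and time $l$ it may have grown arbitrarily. The ingredient you are missing is precisely where sub-additivity enters, in combination with the event $\theta^k\masA_{l+k}(\emptyset)$: on that event the $\emptyset$-started system (from time $-k$) is empty at time $l$, so sub-additivity yields
\[
\bigl|C_\phi\bigl(WC\circ\theta^{-k}\cdots C\circ\theta^{l-1},\,ZS\circ\theta^{-k}\cdots S\circ\theta^{l-1}\bigr)\bigr|\;\le\;|W|+0\;\le\;r,
\]
and likewise for the server side. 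Only \emph{after} this bound is in hand do the $r$ consecutive strong erasing couples, via Lemma~\ref{lemma:erasing}, drive the $V$-started buffer to $\emptyset$ by time $l+m$. This is exactly the content of the claim that the events are renovating uniformly over $\mathscr V^r$, and without it your argument does not close. (Relatedly, your time bookkeeping is off: the erasing block sits at times $l,\ldots,l+m-1$, i.e.\ \emph{after} the emptying time $l\ge 0$, not ``between the emptying time and time $0$''.) Separately, your justification that $U^r\in\mathscr V^\infty$ is garbled: the buffer detail at time $0$ is certainly not ``obtained by feeding $(\mc^1\ldots\mc^r,\ms^1\ldots\ms^r)$ into an empty system'' --- that would give $\emptyset$ by \eqref{eq:defstrongcouple1}, not merely a bound $\le 2r$.
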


\begin{proof}
Understanding (strong) erasing couples as the analog of (strong) erasing words in \cite{MBM17}, we can prove the first statement for all $r$ similarly to Proposition 9 in \cite{MBM17}. 
This follows, as in Lemma 5 in \cite{MBM17}, from the fact that 
$\left\{\masA_n\left(\emptyset\right) \circ\theta^{-n}\mathscr B(\left(\mc^1...\mc^r\,,\,\ms^1\,...\,\ms^r\right)\right\}_{\N}$ is a sequence of renovating events of length $m=\sum_{i=1}^r |\mc^i|$ 
for the recursion $\suite{U^{[V]}_{n}}$, for any $V\in \mathscr V^{r}$. The uniqueness statement follows by recurrence on $r$, 
exactly as in the proof of Theorem 3 in \cite{MBM17}. 
\end{proof}

\begin{ex}
\label{ex:NNbis}
\rm
We address the toy example of the 'NN' graph, for which we explicitly construct the unique solution to (\ref{eq:recurstat}) for $\phi=\textsc{fcfs}$ and {\sc lcfs}.
   
Specifically, consider the structure $\maB=\left(\{1,2,3\},\{\bar 1,\bar 2,\bar 3\},E,F\right)$, where the matching graph 
$E$ and the arrival graph $F$ are respectively given by the left and right graphs in Figure \ref{Fig:NNarrival}. 
\begin{figure}[h!]
\begin{center}
\begin{tikzpicture}
\draw[-] (-3.5,3) -- (-2.5,2);  
\draw[-] (-3.5,3) -- (-3.5,2);
\draw[-] (-2.5,3) -- (-1.5,2);
\draw[-] (-2.5,3)-- (-2.5,2);
\draw[-] (-1.5,3)-- (-1.5,2); 
\fill (-3.5,3) circle (2.5pt) node[above] {\small{1}} ;
\fill (-2.5,3) circle (2.5pt) node[above] {\small{2}} ;
\fill (-1.5,3) circle (2.5pt) node[above] {\small{3}} ;
\fill (-3.5,2) circle (2.5pt) node[below] {\small{$\bar 1$}} ;
\fill (-2.5,2) circle (2.5pt) node[below] {\small{$\bar 2$}} ;
\fill (-1.5,2) circle (2.5pt) node[below] {\small{$\bar 3$}} ;
\draw[-] (1.5,3) -- (2.5,2);  
\draw[-] (2.5,3) -- (1.5,2);
\draw[-] (2.5,3) -- (3.5,2);
\draw[-] (3.5,3)-- (2.5,2);
\draw[-] (3.5,3)-- (1.5,2); 
\fill (1.5,3) circle (2.5pt) node[above] {\small{1}} ;
\fill (2.5,3) circle (2.5pt) node[above] {\small{2}} ;
\fill (3.5,3) circle (2.5pt) node[above] {\small{3}} ;
\fill (1.5,2) circle (2.5pt) node[below] {\small{$\bar 1$}} ;
\fill (2.5,2) circle (2.5pt) node[below] {\small{$\bar 2$}} ;
\fill (3.5,2) circle (2.5pt) node[below] {\small{$\bar 3$}} ;
\end{tikzpicture}
\caption[smallcaption]{Matching graph (left) and arrival graph (right) of Example \ref{ex:NNbis}.} \label{Fig:NNarrival}
\end{center}
\end{figure}
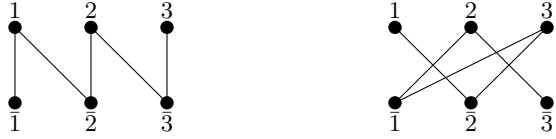

The matching structure $\maB$ satisfies case (iiia) of Proposition \ref{pro:strongcouple}, for $\check{\maC}=\{1,2\}$, $\check{\maS}=\{\bar 2,\bar 3\}$ and 
$\mathscr P=1\v \bar 2 \v 2 \v \bar 3$. Thus there necessarily exist strong erasing couples for both {\sc fcfs} and {\sc lcfs}. 
The construction below is independent of the lists of preferences, and we drop again these parameters from all notation. 
Set $\Omega^0:=\left\{\om,\,\theta\om,\,...\,,\theta^8\om\right\}$, where 
\begin{equation}
\label{eq:om}
\small{\om=...(1,\bar 2)(2,\bar 1)(1,\bar 2)(2,\bar 3)(1,\bar 2)(2,\bar 3)(2,\bar 3)(3,\bar 1)(3,\bar 2)\underline{\mathbf{(1,\bar 2)}}(2,\bar 1)(1,\bar 2)(2,\bar 3)(1,\bar 2)(2,\bar 3)(2,\bar 3)(3,\bar 1)(3,\bar 2)...,}
\end{equation}
in which the underlined couple is the $0$-coordinate (i.e. the origin of time). Setting $\mathscr F^0$ as the power-set of $\Omega^0$  
and $\bp$ the uniform probability on $\Omega^0$, it is immediate that $\mathscr Q^0=\left(\Omega^0,\mathscr F^0,\bp,\theta\right)$ is a stationary ergodic quadruple. 
Observe that the image measure $\bp$ corresponds to the following probability measure $\mu$ on $F$: 
\[\mu(1,\bar 2)={1 \over 3},\quad\mu(2,\bar 3)={1 \over 3},\quad\mu(2,\bar 1)={1 \over 9},\quad\mu(3,\bar 2)={1 \over 9},\quad\mu(3,\bar 1)={1 \over 9},\] 
which clearly satisfies (\ref{eq:Ncond}). (Observe however, taking the independent set $I=\{3\}\times\{\bar 1\}$, that (\ref{eq:Scond}) does not hold.) 
Consider the couple $(\mc,\ms)=(331212122,\bar 1\bar 2\bar 2\bar 1 \bar 2\bar 3\bar 2\bar 3\bar 3)$. It can be checked that 
$Q_{\textsc{fcfs}}\left(\breve{\mc},\breve{\ms}\right)=\emptyset$ and $Q_{\textsc{lcfs}}\left(\breve{\mc},\breve{\ms}\right)=\emptyset$ for any respective suffixes $\breve{\mc}$ and $\breve{\ms}$ of $\mc$ and $\ms$. 
Moreover, we obtain on the one hand that 
\[Q_{\textsc{fcfs}}\left(3{\mc},\bar 1{\ms}\right)=Q_{\textsc{fcfs}}\left(3{\mc},\bar 2{\ms}\right)=Q_{\textsc{fcfs}}\left(2{\mc},\bar 1{\ms}\right)
=Q_{\textsc{fcfs}}\left(1{\mc},\bar 3{\ms}\right)=\emptyset,\]
and on the other hand, 
\[Q_{\textsc{lcfs}}\left(3{\mc}\mc\,\bar 1{\ms}\ms\right)=Q_{\textsc{fcfs}}\left(3{\mc}\mc,\bar 2{\ms}\ms\right)=Q_{\textsc{fcfs}}\left(2{\mc}\mc,\bar 1{\ms}\ms\right)
=Q_{\textsc{fcfs}}\left(1{\mc}\mc,\bar 3{\ms}\ms\right)=\emptyset.\]
Thus $(\mc,\ms)$ is a strong erasing couple for $(\maB,\textsc{fcfs})$, whereas $(\mc\mc,\ms\ms)$ is a strong erasing couple for $(\maB,\textsc{lcfs})$. 
By the very definition of $\Omega^0$, condition (\ref{eq:renov1}) is trivially satisfied for all $r$ for both {\sc fcfs} and {\sc lcfs}. Therefore there exists a unique stationary 
solution $U^{\sf}$ (resp., $U^{\sl}$) of (\ref{eq:recurstat}) for {\sc fcfs} (resp., {\sc lcfs}). They are respectively given by 
\begin{equation}
\label{eq:solNNfcfs}\left\{\begin{array}{c}
U^{\sf}(\om)=(33,\bar 1\bar 2),\,U^{\sf}(\theta\om)=(33,\bar 2\bar 2),\,U^{\sf}(\theta^2\om)=(33,\bar 2\bar 1),\,U^{\sf}(\theta^3\om) =(33,\bar 1\bar 2),\\U^{\sf}(\theta^4\om)=(3,\bar 1),\,
U^{\sf}(\theta^5\om)=(3,\bar 2),\,U^{\sf}(\theta^6\om)=\emptyset,\,U^{\sf}(\theta^7\om)=\emptyset,\,U^{\sf}(\theta^8\om)=(3,\bar 1);
\end{array}\right.
\end{equation}
\begin{equation}
\label{eq:solNNlcfs}\left\{\begin{array}{c}
U^{\sl}(\om)=(33,\bar 1\bar 2),\,U^{\sl}(\theta\om)=(33,\bar 1\bar 2),\,U^{\sl}(\theta^2\om)=(33,\bar 1\bar 1),\,U^{\sl}(\theta^3\om) =(33,\bar 1\bar 2),\\U^{\sl}(\theta^4\om)=(3,\bar 1),\,
U^{\sl}(\theta^5\om)=(3,\bar 2),\,U^{\sl}(\theta^6\om)=\emptyset,\,U^{\sl}(\theta^7\om)=\emptyset,\,U^{\sl}(\theta^8\om)=(3,\bar 1).
\end{array}\right.
\end{equation}
\end{ex}

\subsection{Independent Case}
\label{subsec:iid}
We show in this Section, that the conditions (\ref{eq:renov0}) and (\ref{eq:renov1}) take a simple form 
if we assume additionally that the input sequence is mutually independent, i.e. under assumption (IID)). 

Denote for any $\CE_0$-valued r.v. $V=(W,Z)$, for any $k\in\mathbb N^*$, by $\tau_j(V)$ the $j$-th visit time to $\emptyset$ for the process $\left(U_n^{[V]}\right)_n$:
\[\tau_0(V) :=0, \quad \tau_j(V) := \inf \{n > \tau_{j-1}(V), U_{n}^{[V]} = \emptyset \}, \; j\geq 1.\] 
We define the following stability condition depending on the initial condition $V$,  
\begin{itemize}
\item[\textbf{(H2)}] The stopping time $\tau_1(V)$ is integrable.
\end{itemize} 
A quick survey of the literature on the stability of matching models gives the following, 
\begin{proposition}
\label{prop:H2}
Assumption (H2) holds true for any $\CE_0$-valued r.v. $V$, in the following cases: 
\begin{enumerate}
\item $(\maC\cup\maS,A)$ is strongly connected and $(\maB,\mu)$ satisfies (\ref{eq:Scond});
\item $(\maC\cup\maS,A)$ is strongly connected, $(\maB,\mu)$ satisfies (\ref{eq:Ncond}) and $\phi=\textsc{ml}$;
\item $(\maC\cup\maS,A)$ is strongly connected, $\maG$ is bi-separable and  $(\maB,\mu)$ satisfies (\ref{eq:scondmonotone}); 
\item $\maB$ defines a BM model and $\phi=\textsc{fcfs}$;
\item $\maB$ defines a GM model and $\phi=\textsc{fcfs}$; 
\item $\maB$ defines a GM model and $\phi=${\sc ml}. 
\end{enumerate}
\end{proposition}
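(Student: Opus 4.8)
The plan is to recast (H2) as a positive‑recurrence statement for a countable‑state Markov chain and then read the conclusion off the stability literature, case by case. Under (IID) the buffer‑detail sequence $\suite{U^{[V]}_n}$ is a time‑homogeneous Markov chain on the countable set $\CE_0$ (the difference $|W|-|Z|$ is preserved by the dynamics, so $\CE_0$ is invariant), and when $\phi$ is class‑admissible — in particular for {\sc ml} — the class‑detail sequence $\suite{[U^{[V]}_n]}$ is likewise a Markov chain on the countable set $\{(x,y)\in\mathbb E:\sum_i x(i)=\sum_j y(j)\}$. Since $\{U^{[V]}_n=\emptyset\}$ and $\{[U^{[V]}_n]=\mathbf 0\}$ are the \emph{same} pathwise event, $\tau_1(V)$ is the same random variable for both chains, so it suffices to prove that in each of the six cases the relevant chain is irreducible and positive recurrent: an irreducible positive recurrent chain has finite mean first‑passage time between all pairs of states, and when $V$ has almost surely bounded buffer size (its support then meeting only the finite set $\{v\in\CE_0:|v|\le r\}$ for some $r$) one gets $\Expect[\tau_1(V)]\le\max_{|v|\le r}\Expect_v[\tau_1]<\infty$, which is (H2). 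Irreducibility follows from Theorem 4.1 of \cite{BGM13}, which ties it to the strong connectedness of $(\maC\cup\maS,A)$ — an explicit hypothesis in Cases 1--3, and a consequence of Lemma \ref{lemma:strongconnect} in Cases 4--6.

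It then remains to quote positive recurrence. Case 1 is Proposition 5.2 of \cite{BGM13}: (\ref{eq:Scond}) is sufficient for positive recurrence of the EBM chain under (IID). Case 2 is the maximality of {\sc ml} for EBM from \cite{BGM13}: under (\ref{eq:Ncond}) the {\sc ml}‑driven chain is positive recurrent. Case 3 reduces to Case 1 (or Case 2), because by Proposition \ref{pro:equivalenceBGM} the conditions (\ref{eq:scondmonotone}), (\ref{eq:scondmonotone0}), (\ref{eq:Scond}) and (\ref{eq:Ncond}) all coincide for a bi‑separable $\maG$. Cases 4--6 are the classical maximality results for the two specializations: the BM model under {\sc fcfs} is positive recurrent under the stochastic Hall‑type condition (\ref{eq:Ncond}) (\cite{calkapwei09, AW11}; reproved with the product‑form stationary law in \cite{ABMW17}), while the GM model is positive recurrent under {\sc ml} (\cite{MaiMoy16}) and under {\sc fcfs} (\cite{MBM17}) under the corresponding stochastic Hall‑type condition for general matching graphs. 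In each of these cases one simply invokes the cited result under the hypotheses required therein.

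The only delicate point is bookkeeping rather than mathematics, and it is the step I expect to absorb most of the write‑up: one must check that the ``empty system'' of each cited reference — the empty class detail $\mathbf 0$ for the class‑admissible policies, the empty word of the FCFS Markov chain for {\sc fcfs}, the empty state of the natural Markov representation of \cite{BGM13} for the general EBM result — is exactly the state $\emptyset$ of our recursion, and that positive recurrence of that reference chain (together with its irreducibility) transfers to finiteness of $\Expect[\tau_1(V)]$ for an arbitrary admissible $V$ of bounded buffer size, not merely for $V=\emptyset$. Both are immediate from the pathwise identity $\{U^{[V]}_n=\emptyset\}=\{[U^{[V]}_n]=\mathbf 0\}$ and from the standard fact that an irreducible positive recurrent chain has finite mean hitting times from every state, so no genuinely new estimate is needed.
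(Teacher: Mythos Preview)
Your approach is essentially the same as the paper's: both arguments reduce (H2) to the positive recurrence results already established in \cite{BGM13}, \cite{AW11}/\cite{ABMW17}, \cite{MaiMoy16} and \cite{MBM17}, with Case~3 deduced from Case~1 via Proposition~\ref{pro:equivalenceBGM}. The paper's proof is a bare list of citations (Theorem~4.2, Proposition~5.2 and Theorem~7.1 of \cite{BGM13}; Theorem~3 of \cite{AW11}; Proposition~11 of \cite{MBM17}), whereas you additionally spell out the Markov-chain scaffolding (irreducibility via Theorem~4.1 of \cite{BGM13} and Lemma~\ref{lemma:strongconnect}, then finite mean hitting times for bounded $V$) --- this is helpful exposition but not a different strategy.
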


\begin{proof}
Assertions 1 and 2 follow from Theorem 4.2 in \cite{BGM13}, and respectively from Proposition 5.2 and Theorem 7.1 in \cite{BGM13}. Assertion 
4 follows from Theorem 3 in \cite{AW11} (revisited in Theorem 2 of \cite{ABMW17}). Items 5 and 6 are precisely assertions 1 and 2 of Proposition 11 in \cite{MBM17}. 
To prove assertion 3, just notice that (\ref{eq:scondmonotone}) entails (\ref{eq:Scond}) from Proposition \ref{pro:equivalenceBGM}, and we conclude applying 1. 
\end{proof}

We have the following result, 
\begin{theorem}
\label{thm:mainiid}
Suppose that assumptions (IID) and (H2) hold. If $\phi$ is sub-additive and $(\mathcal B,\phi)$ are such that any admissible buffer detail $(\mw,\mz)$ admits an erasing couple, then  
there exists a unique solution $U^*$ to (\ref{eq:recurstat}) in $\mathscr V^{\infty}$ such that $\bpr{U^*=\emptyset}>0$, and to which all sequences 
$\suite{U^{[V]}_n}$, for $V \in \mathscr V^{\infty}$, converge with strong backwards coupling. 
\end{theorem}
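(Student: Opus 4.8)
The plan is to verify, under the independence assumption, the renovation-type hypotheses appearing in Proposition \ref{pro:renov1} and Theorem \ref{thm:main}, and then to read off existence, uniqueness and strong backward coupling from those statements. Under (IID) the input sequence is i.i.d., so the buffer-detail recursion (\ref{eq:recur}) is driven by i.i.d. letters; this makes the events $\masA_l(V)=\{U^{[V]}_l=\emptyset\}$ and the window events $\mathscr B(\cdot)$ amenable to the Borel--Cantelli lemma, combined with renewal theory applied to the successive visits of $\suite{U^{[\emptyset]}_n}$ to $\emptyset$.

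First I would record that (H2), taken as in Proposition \ref{prop:H2} for every $\CE_0$-valued initial condition, in particular for $\emptyset$, makes $\suite{U^{[\emptyset]}_n}$ return to $\emptyset$ with integrable inter-visit times; by the renewal theorem --- or merely a Ces\`aro argument, to sidestep any aperiodicity discussion --- the visit times to $\emptyset$ have positive asymptotic density, so $\bpr{U^{[\emptyset]}_n=\emptyset}$ stays bounded away from $0$, and the same holds for the backward-shifted processes $U^{[\emptyset]}_\cdot\circ\theta^{-k}$. Moreover a sub-additivity argument shows $\left|U^{[V]}_l\circ\theta^{-k}\right|\le\left|V\right|$ on $\masA_l(\emptyset)\cap\theta^k\masA_{l+k}(\emptyset)$, so at such an instant $l$ the process issued from $V\in\mathscr V^r$ at time $-k$ sits in the \emph{finite} set of buffer details of length at most $2r$.

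The technical heart is then to exhibit a single admissible input $\mathbf e^{*}$, of a fixed length $m$, that empties \emph{every} state of that finite set at once. This I would build from the hypothesis: each such state admits an erasing couple, and iterating the construction in the proof of Proposition \ref{prop:existerasing}(1) produces, for each of the finitely many states, a finite completely-matchable input erasing it; these are then assembled into one word --- padding by completely matchable couples so that all candidates share the common length $m$, and invoking sub-additivity at each concatenation step to check that the assembled word still erases each state. Under (IID) the event $\theta^{-l}\mathscr B(\mathbf e^{*})$ has a fixed positive probability and is independent across disjoint windows; choosing disjoint windows whose left endpoints are visit times of $\suite{U^{[\emptyset]}_n}$, resp. of its backward shifts, to $\emptyset$, the second Borel--Cantelli lemma then yields, $\bp$-almost surely and simultaneously for every shift $\theta^k$, the occurrence of $\masA_l(\emptyset)\cap\theta^k\masA_{l+k}(\emptyset)\cap\theta^{-l}\mathscr B(\mathbf e^{*})$ for some $l$ below any prescribed level --- which is exactly (\ref{eq:renov1}) (and, for $V=\emptyset$, (\ref{eq:renov0})), for every $r$. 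This is precisely where (H2) is indispensable: without integrable return times to $\emptyset$ there is no guarantee that the relevant processes sit in a bounded region often enough, and the erasing windows could fail to recur.

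Granting (\ref{eq:renov1}) for all $r$, Theorem \ref{thm:main} --- or, directly, the renovating-events theorem (e.g. Theorem 2.5.3 of \cite{BacBre02}) through Proposition \ref{pro:renov1}, with $\bigl\{\masA_l(\emptyset)\cap\theta^{-l}\mathscr B(\mathbf e^{*})\bigr\}$ taken as renovating events of length $m$ for the recursions started in $\mathscr V^r$ --- furnishes, for each $r$, a solution $U^{r}\in\mathscr V^{\infty}$ of (\ref{eq:recurstat}) with $\bpr{U^{r}=\emptyset}>0$ (this positivity again using that $\tau_1(\emptyset)$ is integrable, not merely finite), to which every $\suite{U^{[V]}_n}$ with $V\in\mathscr V^{r}$ converges with strong backward coupling. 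Finally I would argue, by recurrence on $r$ exactly as in the proof of Theorem \ref{thm:main}, that $U^{r}$ does not depend on $r$: two stationary solutions that both charge $\emptyset$ must agree, since on the $\bp$-full-measure set where both hit $\emptyset$ at a common time they coincide forever after. Writing $U^{*}$ for this common solution and taking the union over $r$ gives the claim on $\mathscr V^{\infty}$. The main obstacle I anticipate is the construction of $\mathbf e^{*}$ from the state-dependent erasing couples --- checking, via repeated sub-additivity, that one word empties \emph{all} bounded states --- since a naive concatenation need not do so and the ordering and padding require care.
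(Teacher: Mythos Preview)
Your proposal is essentially sound, but it takes a more circuitous route than the paper's own argument. The paper does \emph{not} attempt to manufacture a single universal erasing word $\mathbf e^{*}$ in order to force condition~(\ref{eq:renov1}); instead it proceeds in three short steps: (i) a direct forward-coupling argument (the analog of Proposition~13 in \cite{MBM17}) showing that any two trajectories $\suite{U^{[V]}_n}$ and $\suite{U^{[V']}_n}$ with $V,V'\in\mathscr V^{\infty}$ eventually coalesce; (ii) for each fixed $V\in\mathscr V^{\infty}$, the verification of the weaker condition~(\ref{eq:renov0}) straight from~(H2), independence, sub-additivity and the existence of a state-dependent erasing couple (analog of Proposition~12 in \cite{MBM17}); (iii) an appeal to Proposition~\ref{pro:renov1}, with uniqueness read off from~(i). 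The point is that (\ref{eq:renov0}) only asks that the two shifted trajectories from the \emph{same} $V$ meet at $\emptyset$, and this can be obtained one $V$ at a time without ever producing a word that empties all bounded states simultaneously.

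Your route through Theorem~\ref{thm:main} and the construction of $\mathbf e^{*}$ is workable, but the step you correctly flag as ``the main obstacle'' is genuinely delicate: from state-dependent erasing couples alone, concatenation with padding does not obviously yield a single input emptying every state of length at most~$2r$, because after applying the erasing couple of one state the other states land---depending on the drawn preferences---in a \emph{new} finite family of states that must in turn all be erased, and one must argue that this iterative procedure terminates. It does (sub-additivity confines everything to a fixed finite subset of $\CE_0$, and each stage strictly enlarges the set of initial states already sent to $\emptyset$), but the paper's approach buys you the conclusion without ever facing this bookkeeping. What your approach buys, conversely, is a renovation event that is \emph{uniform} over all initial conditions in $\mathscr V^r$, which is conceptually closer to Theorem~\ref{thm:main} and would, for instance, yield a common backward-coupling time for all of $\mathscr V^r$ at once.
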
 

\begin{proof}
This result is analog to Theorem 3 in \cite{MBM17}, and we skip the technical details of the proof. The main steps, which can be checked using similar arguments to 
those in \cite{MBM17}, are as follows: 
\begin{enumerate}
\item Under these assumptions, an analog argument to Proposition 13 in \cite{MBM17} shows that there is forward coupling between $\suite{U^{[V]}_n}$ and $\suite{U^{[V^*]}_n}$ for any $V$ and $V^*$ in $\mathscr V^\infty$;
\item Fix a r.v. $V\in\mathscr V^{\infty}$, and let $r$ be such that $V \in \mathscr V^r$. From the independence of the input, the sub-additivity of $\phi$ and the existence of erasing couples of any $(\mw,\mz)$ 
for $(\mathcal B,\phi)$, one can show similarly to Proposition 12 in \cite{MBM17} that assumption (H2) entails (\ref{eq:renov0}); 
\item As (H1) holds true under the iid assumptions, we can apply Proposition \ref{pro:renov1}: $V$ converges with strong backwards coupling, and thereby also in the forward sense, to a stationary sequence 
$\suite{U\circ\theta^n}$, where $U\in\mathscr V^{\infty}$. But from 1., any pair of such stationary sequences $\suite{U\circ\theta^n}$ and $\suite{U^*\circ\theta^n}$ couple, and therefore coincide almost surely. 
Thus there exists a unique solution $U$ to (\ref{eq:recurstat}) in $\mathscr V^{\infty}$. This completes the proof. 
\end{enumerate}  
\end{proof}
\paragraph{Summarizing the results.}
To summarize Theorems \ref{thm:main} and \ref{thm:mainiid}, for a given bipartite matching structure $\mathcal B$ and a sub-additive matching policy $\phi$ (such as 
{\sc ml}, random, priorities, {\sc lcfs} and {\sc fcfs} - Section \ref{sec:subadd}), there exists a unique proper stationary buffer detail on the Palm space $\mathscr Q$ in the two following general cases:
\begin{itemize} 
\item The input is stationary ergodic, $(\mathcal B,\phi)$ admits a strong erasing couple (which is true if Proposition \ref{pro:strongcouple} - or Proposition \ref{pro:strongcoupleBM} for a BM model - holds true), and the renovation condition (\ref{eq:renov1}) (which entails (\ref{eq:Ncond})) holds true for any $r \ge 1$.  
\item The input is iid, any admissible state admits an erasing couple for $(\mathcal B,\phi)$ (which is true n particular under either conditions of Proposition \ref{prop:existerasing}), 
and the regeneration condition (H2) (satisfied under either conditions of Proposition \ref{prop:H2}) holds true.  
\end{itemize}

\subsection{Constructing bi-infinite perfect matchings}
\label{subsec:matchings} 
Fix a bipartite matching structure $\mathcal B$ and a sub-additive matching policy $\phi$. Suppose that we are under either one of the above conditions. Then there exists, on the Palm space $\mathscr Q$ of the input, a unique $\theta$-compatible buffer-content sequence $\suite{U\circ\theta^n}$, that is such that $\bpr{U=\emptyset}>0$. By the ergodicity of the shift, this readily entails that the set 
\[\left\{n \in \mathbb Z\,:\,U\circ\theta^n = \emptyset\right\}=:\left\{\mbox{Construction points of the system on }\mathbb Z\right\}\]
is infinite (and infinite on both sides of the origin). As is done for stationary queuing systems e.g. in \cite{Nev83} or Chapter 2 of \cite{BacBre02}, and for stationary BM models in Section 4.6 of \cite{MBM17}, 
we can easily use these construction points to build a unique stationary {\em matching} of the bi-infinite input $\left((\maC,\maS,\Sigma,\Gamma)\circ\theta^n\right)_{n\in\mathbb Z}$, by just constructing the unique matching of the incoming customers and servers between each successive construction points. 

\begin{ex}[Example \ref{ex:NNbis}, continued]
\rm
From the unique respective stationary solutions of (\ref{eq:recurstat}) for $\phi=\textsc{fcfs}$ and {\sc lcfs}, given in (\ref{eq:solNNfcfs}) and (\ref{eq:solNNlcfs}), we can construct 
the unique {\sc fcfs} and the unique {\sc lcfs} matchings of the input considered in Example \ref{ex:NNbis}. They are respectively represented (for the sample $\om$ defined by (\ref{eq:om})), in Figure 
\ref{Fig:matchingNN}.  

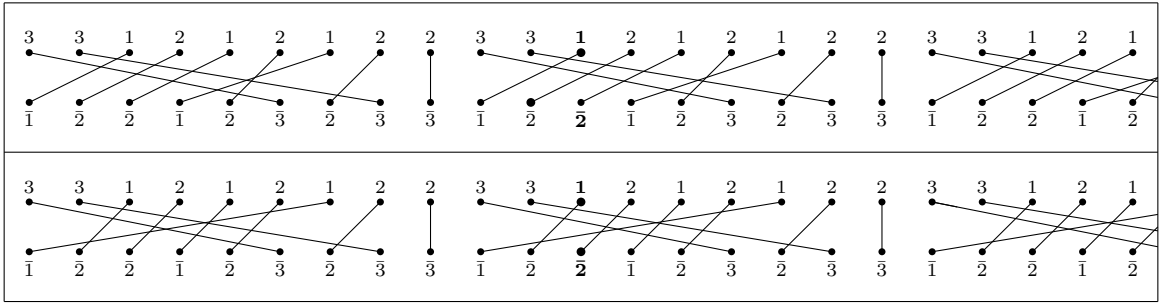
\begin{figure}[h!]
\begin{center}
\begin{tikzpicture}[scale=0.66]
\draw[-] (0.5,1) -- (23.5,1);
\draw[-] (0.5,1) -- (0.5,-5);
\draw[-] (0.5,-2) -- (23.5,-2);
\draw[-] (0.5,-5) -- (23.5,-5);
\draw[-] (23.5,-5) -- (23.5,1);
\fill (1,0) circle (2pt) node[above] {\scriptsize{3}} ;
\fill (2,0) circle (2pt) node[above] {\scriptsize{3}} ;
\fill (3,0) circle (2pt) node[above] {\scriptsize{1}} ;
\fill (4,0) circle (2pt) node[above] {\scriptsize{2}} ;
\fill (5,0) circle (2pt) node[above] {\scriptsize{1}} ;
\fill (6,0) circle (2pt) node[above] {\scriptsize{2}} ;
\fill (7,0) circle (2pt) node[above] {\scriptsize{1}} ;
\fill (8,0) circle (2pt) node[above] {\scriptsize{2}} ;
\fill (9,0) circle (2pt) node[above] {\scriptsize{2}} ;
\fill (10,0) circle (2pt) node[above] {\scriptsize{3}} ;
\fill (11,0) circle (2pt) node[above] {\scriptsize{3}} ;
\fill (12,0) circle (2.5pt) node[above] {\scriptsize{$\mathbf{1}$}} ;
\fill (13,0) circle (2pt) node[above] {\scriptsize{2}} ;
\fill (14,0) circle (2pt) node[above] {\scriptsize{1}} ;
\fill (15,0) circle (2pt) node[above] {\scriptsize{2}} ;
\fill (16,0) circle (2pt) node[above] {\scriptsize{1}} ;
\fill (17,0) circle (2pt) node[above] {\scriptsize{2}} ;
\fill (18,0) circle (2pt) node[above] {\scriptsize{2}} ;
\fill (19,0) circle (2pt) node[above] {\scriptsize{3}} ;
\fill (20,0) circle (2pt) node[above] {\scriptsize{3}} ;
\fill (21,0) circle (2pt) node[above] {\scriptsize{1}} ;
\fill (22,0) circle (2pt) node[above] {\scriptsize{2}} ;
\fill (23,0) circle (2pt) node[above] {\scriptsize{1}} ;
\fill (1,-1) circle (2pt) node[below] {\scriptsize{$\bar 1$}} ;
\fill (2,-1) circle (2pt) node[below] {\scriptsize{$\bar 2$}} ;
\fill (3,-1) circle (2pt) node[below] {\scriptsize{$\bar 2$}} ;
\fill (4,-1) circle (2pt) node[below] {\scriptsize{$\bar 1$}} ;
\fill (5,-1) circle (2pt) node[below] {\scriptsize{$\bar 2$}} ;
\fill (6,-1) circle (2pt) node[below] {\scriptsize{$\bar 3$}} ;
\fill (7,-1) circle (2pt) node[below] {\scriptsize{$\bar 2$}} ;
\fill (8,-1) circle (2pt) node[below] {\scriptsize{$\bar 3$}} ;
\fill (9,-1) circle (2pt) node[below] {\scriptsize{$\bar 3$}} ;
\fill (10,-1) circle (2pt) node[below] {\scriptsize{${\bar 1}$}} ;
\fill (11,-1) circle (2.5pt) node[below] {\scriptsize{$\bar 2$}} ;
\fill (12,-1) circle (2pt) node[below] {\scriptsize{$\mathbf{\bar 2}$}} ;
\fill (13,-1) circle (2pt) node[below] {\scriptsize{$\bar 1$}} ;
\fill (14,-1) circle (2pt) node[below] {\scriptsize{$\bar 2$}} ;
\fill (15,-1) circle (2pt) node[below] {\scriptsize{$\bar 3$}} ;
\fill (16,-1) circle (2pt) node[below] {\scriptsize{$\bar 2$}} ;
\fill (17,-1) circle (2pt) node[below] {\scriptsize{$\bar 3$}} ;
\fill (18,-1) circle (2pt) node[below] {\scriptsize{$\bar 3$}} ;
\fill (19,-1) circle (2pt) node[below] {\scriptsize{$\bar 1$}} ; 
\fill (20,-1) circle (2pt) node[below] {\scriptsize{$\bar 2$}} ;
\fill (21,-1) circle (2pt) node[below] {\scriptsize{$\bar 2$}} ;
\fill (22,-1) circle (2pt) node[below] {\scriptsize{$\bar 1$}} ;
\fill (23,-1) circle (2pt) node[below] {\scriptsize{$\bar 2$}} ; 
\draw[-] (1,0) -- (6,-1);
\draw[-] (2,0) -- (8,-1);
\draw[-] (3,0) -- (1,-1);
\draw[-] (4,0) -- (2,-1); 
\draw[-] (5,0) -- (3,-1);
\draw[-] (6,0) -- (5,-1);
\draw[-] (7,0) -- (4,-1);
\draw[-] (8,0) -- (7,-1);
\draw[-] (9,0) -- (9,-1);
\draw[-] (10,0) -- (15,-1);
\draw[-] (11,0) -- (17,-1);
\draw[-] (12,0) -- (10,-1);
\draw[-] (13,0) -- (11,-1);
\draw[-] (14,0) -- (12,-1);
\draw[-] (15,0) -- (14,-1);
\draw[-] (16,0) -- (13,-1);
\draw[-] (17,0) -- (16,-1);
\draw[-] (18,0) -- (18,-1);
\draw[-] (19,0) -- (23.5,-0.9);
\draw[-] (20,0) -- (23.5,-0.58);
\draw[-] (21,0) -- (19,-1);
\draw[-] (22,0) -- (20,-1); 
\draw[-] (23,0) -- (21,-1);
\draw[-] (22,-1) -- (23.5,-0.5);
\draw[-] (23,-1) -- (23.5,-0.5);
%
%
\fill (1,-3) circle (2pt) node[above] {\scriptsize{3}} ;
\fill (2,-3) circle (2pt) node[above] {\scriptsize{3}} ;
\fill (3,-3) circle (2pt) node[above] {\scriptsize{1}} ;
\fill (4,-3) circle (2pt) node[above] {\scriptsize{2}} ;
\fill (5,-3) circle (2pt) node[above] {\scriptsize{1}} ;
\fill (6,-3) circle (2pt) node[above] {\scriptsize{2}} ;
\fill (7,-3) circle (2pt) node[above] {\scriptsize{1}} ;
\fill (8,-3) circle (2pt) node[above] {\scriptsize{2}} ;
\fill (9,-3) circle (2pt) node[above] {\scriptsize{2}} ;
\fill (10,-3) circle (2pt) node[above] {\scriptsize{3}} ;
\fill (11,-3) circle (2pt) node[above] {\scriptsize{3}} ;
\fill (12,-3) circle (2.5pt) node[above] {\scriptsize{$\mathbf{1}$}} ;
\fill (13,-3) circle (2pt) node[above] {\scriptsize{2}} ;
\fill (14,-3) circle (2pt) node[above] {\scriptsize{1}} ;
\fill (15,-3) circle (2pt) node[above] {\scriptsize{2}} ;
\fill (16,-3) circle (2pt) node[above] {\scriptsize{1}} ;
\fill (17,-3) circle (2pt) node[above] {\scriptsize{2}} ;
\fill (18,-3) circle (2pt) node[above] {\scriptsize{2}} ;
\fill (19,-3) circle (2pt) node[above] {\scriptsize{3}} ;
\fill (20,-3) circle (2pt) node[above] {\scriptsize{3}} ;
\fill (21,-3) circle (2pt) node[above] {\scriptsize{1}} ;
\fill (22,-3) circle (2pt) node[above] {\scriptsize{2}} ;
\fill (23,-3) circle (2pt) node[above] {\scriptsize{1}} ;
\fill (1,-4) circle (2pt) node[below] {\scriptsize{$\bar 1$}} ;
\fill (2,-4) circle (2pt) node[below] {\scriptsize{$\bar 2$}} ;
\fill (3,-4) circle (2pt) node[below] {\scriptsize{$\bar 2$}} ;
\fill (4,-4) circle (2pt) node[below] {\scriptsize{$\bar 1$}} ;
\fill (5,-4) circle (2pt) node[below] {\scriptsize{$\bar 2$}} ;
\fill (6,-4) circle (2pt) node[below] {\scriptsize{$\bar 3$}} ;
\fill (7,-4) circle (2pt) node[below] {\scriptsize{$\bar 2$}} ;
\fill (8,-4) circle (2pt) node[below] {\scriptsize{$\bar 3$}} ;
\fill (9,-4) circle (2pt) node[below] {\scriptsize{$\bar 3$}} ;
\fill (10,-4) circle (2pt) node[below] {\scriptsize{$\bar 1$}} ;
\fill (11,-4) circle (2pt) node[below] {\scriptsize{$\bar 2$}} ;
\fill (12,-4) circle (2.5pt) node[below] {\scriptsize{$\mathbf{\bar 2}$}} ;
\fill (13,-4) circle (2pt) node[below] {\scriptsize{$\bar 1$}} ;
\fill (14,-4) circle (2pt) node[below] {\scriptsize{$\bar 2$}} ;
\fill (15,-4) circle (2pt) node[below] {\scriptsize{$\bar 3$}} ;
\fill (16,-4) circle (2pt) node[below] {\scriptsize{$\bar 2$}} ;
\fill (17,-4) circle (2pt) node[below] {\scriptsize{$\bar 3$}} ;
\fill (18,-4) circle (2pt) node[below] {\scriptsize{$\bar 3$}} ;
\fill (19,-4) circle (2pt) node[below] {\scriptsize{$\bar 1$}} ;
\fill (20,-4) circle (2pt) node[below] {\scriptsize{$\bar 2$}} ;
\fill (21,-4) circle (2pt) node[below] {\scriptsize{$\bar 2$}} ;
\fill (22,-4) circle (2pt) node[below] {\scriptsize{$\bar 1$}} ;
\fill (23,-4) circle (2pt) node[below] {\scriptsize{$\bar 2$}} ; 
\draw[-] (1,-3) -- (6,-4);
\draw[-] (2,-3) -- (8,-4);
\draw[-] (3,-3) -- (2,-4);
\draw[-] (4,-3) -- (3,-4); 
\draw[-] (5,-3) -- (4,-4);
\draw[-] (6,-3) -- (5,-4);
\draw[-] (7,-3) -- (1,-4);
\draw[-] (8,-3) -- (7,-4);
\draw[-] (9,-3) -- (9,-4);
\draw[-] (10,-3) -- (15,-4);
\draw[-] (11,-3) -- (17,-4);
\draw[-] (12,-3) -- (11,-4);
\draw[-] (13,-3) -- (12,-4);
\draw[-] (14,-3) -- (13,-4);
\draw[-] (15,-3) -- (14,-4);
\draw[-] (16,-3) -- (10,-4);
\draw[-] (17,-3) -- (16,-4);
\draw[-] (18,-3) -- (18,-4);
\draw[-] (19,-3) -- (19.5,-3.1);
\draw[-] (19,-3) -- (23.5,-3.9);
\draw[-] (20,-3) -- (23.5,-3.58);
\draw[-] (21,-3) -- (20,-4);
\draw[-] (22,-3) -- (21,-4); 
\draw[-] (23,-3) -- (22,-4);
\draw[-] (19,-4) -- (23.5,-3.25);
\draw[-] (23,-4) -- (23.5,-3.5);
\end{tikzpicture}
\caption[smallcaption]{Unique bi-infinite {\sc fcfs} matching (top) and {\sc lcfs} matching (bottom) for the matching and arrival graphs of Figure \ref{Fig:NNarrival} and the sample $\om$ in (\ref{eq:om}). 
(The coordinate in bold is the origin of time.)} \label{Fig:matchingNN}
\end{center}
\end{figure}

\end{ex}

\appendix

\section{Proof of Proposition \ref{prop:nonexp1}}
\label{sec:proof}
Consider a random policy $\phi$, and the corresponding $\nu_\phi$ on $\mathbb S$ and $\rho_\phi$ on $\mathbb C$. 
Hereafter we show the non-expansiveness of $\phi$ {\em via} (\ref{eq:defnonexp1}), generalizing the argument of Lemma 4 in \cite{MoyPer17} to the EBM. 
First observe the following: for any state $(x,y)$, any incoming couple $(c,s)$ and any arrays $(\sigma,\gamma)$ drawn from 
 $\nu^\phi$, as the orders of preference of $c$ and $s$ are fixed by $a$ and $b$, the matching policy satisfies the following 
consistency property:
\begin{equation}
\label{eq:consist}
\left\{\begin{array}{ll}
\{p_{\phi}(y,c,\sigma,\gamma),p_{\phi}(y',c,\sigma,\gamma)\} \subset \maP(y,c) \cap \maP(y',c) &\Longrightarrow p_{\phi}(y,c,\sigma,\gamma) = p_{\phi}(y',c,\sigma,\gamma);\\
\{q_{\phi}(x,s,\sigma,\gamma),q_{\phi}(x',s,\sigma,\gamma)\} \subset \maQ(x,s) \cap \maQ(x',s) &\Longrightarrow q_{\phi}(x,s,\sigma,\gamma) = q_{\phi}(x',s,\sigma,\gamma),
\end{array}\right.       
\end{equation}
in other words, the choice of match of $c$ cannot be different in the two systems, if both options were available in both systems (and likewise for $s$). 
\medskip

Consider a system in the state $(x,y)$. Let $(c,s)\in F$ be the entering couple and $(\sigma,\gamma) \in \mathbb S\times\mathbb C$ be the 
couple of arrays of permutations respectively drawn from $\nu_\phi$ and $\rho_\phi$. 
We let $(\ell,k)$ (resp., $(\ell',k')$) be the classes of the respective
matches of the entering items $(c,s)$ if the class detail of the
system is $(x,y)$ (resp., $(x',y')$). We understand that
$k=\emptyset$ if $s$ does not find a match in the system described
by $(x,y)$, nor with $c$, and likewise for $\ell,k'$ and $\ell'$. 
By definition, we thus have $\ell=p_{\phi}(y,c,\sigma,\gamma)$ if $\maP(y,c)\ne \emptyset$, $k=q_{\phi}(x,s,\sigma,\gamma)$ 
if $\maQ(x,s)\ne \emptyset$, $\ell'=p_{\phi}(y',c,\sigma,\gamma)$ if $\maP(y',c)\ne \emptyset$, and $k'=p_{\phi}(x',s,\sigma,\gamma)$ if $\maQ(x',s)\ne \emptyset$. 
Also, for notational brevity we denote
\[(\tx,\ty)=(x,y)\ccc_{\phi}(c,s,\sigma,\gamma);\,\quad\,(\tx',\ty')=(x',y')\ccc_{\phi}(c,s,\sigma,\gamma).\]

$\underline{\mbox{Case 1: }s\not\in \maS(c).}$

\medskip

We then have the following sub-cases,
\begin{enumerate}
\item[(1a)] $\underline{k=k'\ne \emptyset,\ell=\ell\ne \emptyset'}.$ The incoming customer and
server are matched with items of the same respective classes. Thus
$\tx(i)=x(i)$ and $\tx'(i)=x'(i)$ for all $i \ne k$, $\ty(j)=y(j)$
and $\ty'(j)=y'(j)$ for all $j \ne \ell$, whereas
\[\tx(k)=x(k)-1,\,\ty(\ell)=y(\ell)-1,\,\tx'(k)=x'(k)-1,\,\ty'(\ell)=y'(\ell)-1.\]
Hence we have
\begin{align*}
\|(\tx',\ty') - (\tx,\ty)\| & =\sum_{i\ne k} |x(i)-x'(i)| +\sum_{j
\ne
\ell} |y(j)-y'(j)|\\
&\quad\quad+|x(k)-1-(x'(k)-1)|+|(y(\ell)-1)-(y'(\ell)-1)|\\
& = \sum_{i} |x(i)-x'(i)| +\sum_{j} |y(j)-y'(j)| =\|(x',y')-(x,y)\|.
\end{align*}
\item[(1b)] $\underline{k\ne \emptyset, k'\ne \emptyset, k\ne k',\ell=\ell'\ne\emptyset}.$
In this case, the new $c$ is matched with a server of the same class
$\ell$ in both systems, but $s$ is matched with customers of two
different classes in the two systems. Then, from (\ref{eq:consist}), 
we have either $k \not\in \maQ(x',s)$ or $k'\not\in \maQ(x,s)$ (or both). 
Suppose that $k'\not\in \maQ(x,s)$ that is, $x(k')=0$ (the case $x'(k)=0$ is
symmetric). Then, as $x'(k')>0$ we have
\begin{align*}
\|(\tx',\ty') - (\tx,\ty)\| &=\sum_{i \ne k,k'} |x(i)-x'(i)|
+|x(k)-1-x'(k)|+\left(x'(k')-1\right)+ \sum_{j}
|y(j)-y'(j)|\\
& \le \sum_{i \ne k'} |x(i)-x'(i)| +1+\left(x'(k')-1\right)+
\sum_{j}
|y(j)-y'(j)|\\
& = \sum_{i} |x(i)-x'(i)| + \sum_{j} |y(j)-y'(j)|=\|(x',y')-(x,y)\|.
\end{align*}
\item[(1c)] $\underline{k\ne \emptyset, k'= \emptyset,\ell=\ell'\ne\emptyset}.$
 In this case we have $x(k)>0$, and necessarily $x'(k)=0$ (otherwise $s$ would be matched with a $k$ in that system). We get
\begin{align*}
\|(\tx',\ty') - (\tx,\ty)\| &=\sum_{i \ne k} |x(i)-x'(i)|
+\left(x(k)-1\right)+ \sum_{j\ne s}
|y(j)-y'(j)|+|y(s)-(y'(s)+1)|\\
&=\sum_{i} |x(i)-x'(i)| -1 + \sum_{j\ne s}
|y(j)-y'(j)|+|y(s)-(y'(s)+1)|\\
&\le \sum_{i} |x(i)-x'(i)|  + \sum_{j} |y(j)-y'(j)|-1+1.
\end{align*}
\item[(1d)] $\underline{k=k'= \emptyset,\ell=\ell'\ne\emptyset}.$
The new $s$ is unmatched in both systems, hence
\begin{align*}
\|(\tx',\ty') - (\tx,\ty)\| &=\sum_{i} |x(i)-x'(i)| + \sum_{j\ne s}
|y(j)-y'(j)|+|(y(s)+1)-(y'(s)+1)|\\
& = \|(x',y')-(x,y)\|.
\end{align*}
\item[(1e)] $\underline{k=k'\ne \emptyset\ell\ne
\emptyset,\ell'\ne\emptyset,\ell\ne\ell'}.$ Symmetric to case (1b).
\item[(1f)] $\underline{k\ne\emptyset,k'\ne \emptyset,k\ne k',\ell\ne
\emptyset,\ell'\ne\emptyset,\ell\ne\ell'}.$ In this scenario both
$c$ and $s$ find a match in both systems, with different respective
matches in the two systems. Then, from (\ref{eq:consist}) we cannot have both
$k'\in \maQ(x,s)$ and $k \in \maQ(x',s)$. All the same, we cannot have both
$\ell'\in \maP(y,c)$ and $\ell \in \maP(y',c)$. Suppose that $x'(k)=0$ and
$y(\ell')=0$, the other cases are symmetric. We then have that
\begin{align*}
\|(\tx',\ty') - (\tx,\ty)\| & =\sum_{i \ne k,k'} |x(i)-x'(i)|
+\left(x(k)-1\right)+|x(k')-(x'(k')-1)|\\
&\quad\quad+\sum_{j \ne \ell,\ell'} |y(j)-y'(j)|
+\left(y'(\ell')-1\right)+|y(\ell)-1-y'(\ell)|\\
& =\sum_{i \ne k'} |x(i)-x'(i)|
-1+|x(k')-(x'(k')-1)|\\
&\quad\quad+\sum_{j \ne \ell} |y(j)-y'(j)|
-1+|y(\ell)-1-y'(\ell)|\\
& \le \sum_{i} |x(i)-x'(i)| + \sum_{j} |y(j)-y'(j)|-2+2.
\end{align*}
\item[(1g)] $\underline{k\ne\emptyset,k'=\emptyset,\ell\ne
\emptyset,\ell'\ne\emptyset,\ell\ne\ell'}.$ Again from (\ref{eq:consist}), we cannot have 
both $y(\ell')>0$ and $y'(\ell)>0$. Suppose that $y'(\ell)=0$ (the
other case is symmetric). Observe that we also necessarily have that
$x'(k)=0$. Thus,
\begin{align*}
\|(\tx',\ty') - (\tx,\ty)\| & =\sum_{i \ne k} |x(i)-x'(i)|
+\left(x(k)-1\right)+\sum_{j \ne \ell,\ell',s} |y(j)-y'(j)|
\\
&\quad\quad+\left(y(\ell)-1\right)+|y(\ell')-(y'(\ell')-1)|+|y(s)-(y'(s)+1)|\\
& \le\sum_{i} |x(i)-x'(i)| +\sum_{j} |y(j)-y'(j)| -2+2.
\end{align*}
\item[(1h)] $\underline{k=\emptyset,k'\ne\emptyset,\ell\ne
\emptyset,\ell'\ne\emptyset,\ell\ne\ell'}.$ Symmetric to (1g).
\item[(1i)] $\underline{k=k'=\emptyset,\ell\ne
\emptyset,\ell'\ne\emptyset,\ell\ne\ell'}.$ Say $y'(\ell)=0$ (the
case $y(\ell')=0$ is symmetric). Then we have
\begin{align*}
\|(\tx',\ty') - (\tx,\ty)\| & =\sum_{i} |x(i)-x'(i)| +\sum_{j \ne
\ell,\ell',s} |y(j)-y'(j)|\\
&\quad\quad+\left(y(\ell)-1\right)+|y(\ell')-(y'(\ell')-1)|+|(y(s)+1)-(y'(s)+1)|\\
& \le \sum_{i} |x(i)-x'(i)| +\sum_{j} |y(j)-y'(j)| -1+1.
\end{align*}
\item[(1j)] $\underline{k=k'\ne\emptyset,\ell\ne
\emptyset,\ell'=\emptyset}.$ Symmetric to (1c).
\item[(1k)] $\underline{k\ne\emptyset,k'\ne\emptyset,k\ne k',\ell\ne
\emptyset,\ell'=\emptyset}.$ Symmetric to (1g).
\item[(1$\ell$)] $\underline{k\ne\emptyset,k'=\emptyset,\ell\ne
\emptyset,\ell'=\emptyset}.$ We have $x'(k)=0$ and $y'(\ell)=0$,
hence
\begin{align*}
\|(\tx',\ty') - (\tx,\ty)\| & =\sum_{i\ne k,c} |x(i)-x'(i)| +\sum_{j
\ne
\ell,s} |y(j)-y'(j)|\\
&\quad\quad+\left(x(k)-1\right)+\left(y(\ell)-1\right)+|x(c)-(x'(c)+1)|+|y(s)-(y'(s)+1)|\\
& = \sum_{i\ne c} |x(i)-x'(i)|+\sum_{j\ne s} |y(j)-y'(j)|\\
&\quad\quad-2+|x(c)-(x'(c)+1)|+|y(s)-(y'(s)+1)|\\
& \le \sum_{i} |x(i)-x'(i)|+\sum_{j} |y(j)-y'(j)|-2+2.
\end{align*}
\item[(1m)] $\underline{k=\emptyset,k'\ne\emptyset,\ell\ne
\emptyset,\ell'=\emptyset}.$ In that case we have $x(k')=0$ and
$y'(\ell)=0$, so similarly to ($1\ell$),
\begin{align*}
\|(\tx',\ty') - (\tx,\ty)\| & =\sum_{i\ne k',c} |x(i)-x'(i)|
+\sum_{j \ne
\ell,s} |y(j)-y'(j)|\\
&\quad\quad+\left(x'(k')-1\right)+\left(y(\ell)-1\right)+|x(c)-(x'(c)+1)|+|y'(s)-(y(s)+1)|\\
& \le \sum_{i} |x(i)-x'(i)|+\sum_{j} |y(j)-y'(j)|-2+2.
\end{align*}
\item[(1n)] $\underline{k=k'=\emptyset,\ell\ne
\emptyset,\ell'=\emptyset}.$ We must have $y'(\ell)=0$, and thus
\begin{align*}
\|(\tx',\ty') - (\tx,\ty)\| & =\sum_{i\ne c} |x(i)-x'(i)| +\sum_{j
\ne
\ell,s} |y(j)-y'(j)|\\
&\quad\quad+|x(c)-(x'(c)+1)|+\left(y(\ell)-1\right)+|(y(s)+1)-(y'(s)+1)|\\
& \le \sum_{i} |x(i)-x'(i)|+\sum_{j} |y(j)-y'(j)|+1-1.
\end{align*}
\item[(1o)] $\underline{k=k'=\emptyset,\ell=\ell'=\emptyset}.$
Neither $c$ nor $s$ find a match in both systems, and thus
\begin{align*}
\|(\tx',\ty') - (\tx,\ty)\| & =\sum_{i\ne c} |x(i)-x'(i)| +\sum_{j
\ne s} |y(j)-y'(j)|\\
&\quad\quad+|(x(c)+1)-(x'(c)+1)|+|(y(s)+1)-(y'(s)+1)|\\
& \le \sum_{i} |x(i)-x'(i)|+\sum_{j} |y(j)-y'(j)|+1-1.
\end{align*}
\end{enumerate}

\medskip

$\underline{\mbox{Case 2: }s\in \maS(c).}$

\medskip

In this case $c$ and $s$
can be matched. 
We have the following sub-cases,
\begin{enumerate}
\item[(2a)] $\underline{\ell=\ell'=s}.$ In this case we must have
$k=k'=c$, in other words the entering couple is matched together in
both systems. Thus $(\tx,\ty)=(x,y)$ and $(\tx',\ty')=(x',y')$.
\item[(2b)] $\underline{\ell=s,\ell'\ne s,\,k'\ne \emptyset}.$
Thus we have $k=c$, and according to the buffer-first rule we must
then have $x(k')=0$ and $y(\ell')=0$. Thus,
\begin{align*}
\|(\tx',\ty') - (\tx,\ty)\| & =\sum_{i\ne k'} |x(i)-x'(i)| +\sum_{j
\ne \ell'} |y(j)-y'(j)|+(x'(k')-1)+(y'(\ell')-1)\\
& = \sum_{i} |x(i)-x'(i)|+\sum_{j} |y(j)-y'(j)|-2.
\end{align*}
\item[(2c)] $\underline{\ell=s,\ell'\ne s,k'=\emptyset}.$ Again, we have
$y(\ell')=0$ and thus
\begin{align*}
\|(\tx',\ty') - (\tx,\ty)\| & =\sum_{i} |x(i)-x'(i)| +\sum_{j
\ne \ell',s} |y(j)-y'(j)|+|y(s)-(y'(s)+1)|+(y'(\ell')-1)\\
& \le \sum_{i} |x(i)-x'(i)|+\sum_{j} |y(j)-y'(j)|+1-1.
\end{align*}
\item[(2d)] $\underline{\ell\ne s,\ell'= s,k \ne \emptyset}.$ Symmetric to (2b).
\item[(2e)] $\underline{\ell\ne s,\ell'= s,k =\emptyset}.$ Symmetric to (2c).
\item[(2f)] $\underline{k=c,k'\ne c,\,\ell'\ne \emptyset}.$ Analog
to (2b).
\item[(2g)] $\underline{k=c,k'\ne c,\,\ell'=\emptyset}.$ Analog
to (2c).
\item[(2h)] $\underline{k\ne c,k'= c,\,\ell\ne \emptyset}.$ Analog
to (2d).
\item[(2i)] $\underline{k\ne c,k'= c,\,\ell=\emptyset}.$ Analog
to (2e).
\item[(2j)] $\underline{\ell\ne s,\ell'\ne s}.$ We are back to Case 1, excluding the cases where $k=\ell=\emptyset$ and the cases where
$k'=\ell'=\emptyset$.
\end{enumerate}

\section{Proof of Proposition \ref{prop:nonexp2}}

Concerning the policy {\sc ml}, the proof is similar to that for random policies, 
except for the consistency property (\ref{eq:consist}), which does not hold in this case. 
Specifically, an entering item can be matched
with items of two different classes in the two systems, whereas the
queues of these two classes are non-empty in both systems.

Let us consider the following case (the other ones are symmetric or
similar)~: $c \not\in \maC(s)$, the customer of class $c$ is matched
with a server of class $\ell$ in both systems, whereas $s$ is
matched with a customer of class $k$ in the first system, with a
customer of class $k'\ne k$ in the second, with 
\[\{k,k'\} \subset \maQ(x,s) \cap \maQ(x',s).\]
Thus
\begin{equation}
\label{eq:losers1} \|(x',y')\ccc_{\textsc{ml}}(c,s) -
(x,y)\ccc_{\textsc{ml}}(c,s)\| =\sum_{i \ne k,k'}
|x(i)-x'(i)|+ \sum_{j\ne s} |y(j)-y'(j)|+R, \end{equation} where
\[R=\left|(x(k)-1)-x'(k)\right|+\left|x(k')-\left(x'(k')-1\right)\right|.\]
We are in the following alternatives,
\begin{enumerate}
\item if $x(k) > x'(k)$ and $x'(k') > x(k')$, then
\begin{equation*}
R=\left(x(k)-1-x'(k)\right)+\left(x'(k')-1-x(k')\right)=\left|x(k)-x'(k)\right|+\left|x(k')-x'(k')\right|-2.\end{equation*}
\item if $x(k) \le x'(k)$ and $x'(k') > x(k')$, then
\begin{equation*}
R=\left(x'(k)-x(k)+1\right)+\left(x(k')-1-x'(k')\right)
 =\left|x(k)-x'(k)\right|+\left|x(k')-x'(k')\right|.\end{equation*}
\item if $x(k) > x'(k)$ and $x'(k') \le x(k')$, we also have
\begin{equation*}
R=\left(x(k)-1-x'(k)\right)+\left(x(k')-x'(k')+1\right)=\left|x(k)-x'(k)\right|+\left|x(k')-x'(k')\right|.
 \end{equation*}
 \item the case $x(k) \le x'(k)$ and $x'(k') \le x(k')$ cannot
 occur. Indeed, by the definition of {\sc ml} we have that
\[x(k') \le x(k)\,\mbox{ and }x'(k) \le x'(k').\]
Thus, would we also have $x(k) \le x'(k)$ and $x'(k') \le x(k')$, we
would obtain that
\[x(k) \le x'(k) \le x'(k') \le x(k') \le x(k),\]
and thus
\[x(k)=x(k')= x'(k) = x'(k').\]
This is impossible since, in that case, both systems would have
chosen the same match $k$ (if $\phi$ prioritizes $k$ over $k'$) or
the same match $k'$ (else), for the new $s$-server.
\end{enumerate}
Consequently, in view of (\ref{eq:losers1}), in all possible cases
we obtain that \begin{multline*}
\|(x',y')\ccc_{\textsc{ml}}(c,s) -
(x,y)\ccc_{\textsc{ml}}(c,s)\|\\
\le \sum_{i \ne k,k'} |x(i)-x'(i)|+ \sum_{j\ne s}
|y(j)-y'(j)|+\left|x(k)-x'(k)\right|+\left|x(k')-x'(k')\right|=\|(x',y')-
(x,y)\|,
\end{multline*} 
which concludes the proof.

\bibliographystyle{abbrv}
\bibliography{matching}

\end{document}